\documentclass[11pt,a4paper]{article}

\usepackage{mathtools}
\usepackage{authblk} 
\usepackage{algpseudocode,algorithmicx,algorithm}
\usepackage{array}
\usepackage{booktabs}
\usepackage{etoolbox}
\usepackage{tcolorbox}
\usepackage[utf8]{inputenc}
\usepackage[T1]{fontenc}
\usepackage{graphicx}
\usepackage{amsthm}
\usepackage{amsmath}
\usepackage{mathdots}
\usepackage{verbatim}
\usepackage{pifont,url}
\usepackage{enumitem,amssymb}
\usepackage{multirow}
\usepackage{appendix}
\usepackage{epstopdf}
\usepackage{booktabs}
\usepackage{makecell}
\usepackage{diagbox}
\usepackage{setspace}
\usepackage{indentfirst}
\usepackage{qcircuit}

\usepackage{mathrsfs}
\usepackage{latexsym,bm}
\usepackage{amsmath,amsfonts,amsmath,amssymb,amsthm,bbm}
\usepackage{extarrows}

\usepackage{enumerate,cases,multirow}
\usepackage{subfigure}
\usepackage{longtable,colortbl,arydshln,threeparttable}
\definecolor{mygray}{gray}{.9}

\usepackage{indentfirst}
\usepackage{geometry}
\usepackage{cite}

\usepackage{listings}

\usepackage{makeidx}        
\usepackage{booktabs}
\usepackage[unicode=true,bookmarksnumbered,colorlinks,citecolor=red,linkcolor=red,hyperindex,linktocpage=true]{hyperref}

\def \d {\mathrm{d}}

\newcounter{parentalgorithm}

\makeatother

\newtheorem{theorem}{Theorem}[section]
\newtheorem{lemma}{Lemma}[section]

\theoremstyle{remark}
\newtheorem{remark}{\bf Remark}[section]

\numberwithin{equation}{section}
\usepackage{pifont}

\begin{document}

\title{Quantum Implicit-Explicit Schemes for Multiscale Ordinary and Partial Differential Equations via Schr\"odingerization}
\author[1,3]{Qitong Hu\thanks{huqitong@sjtu.edu.cn}$^\P$}
\author[1,2, 3]{Xiaoyang He\thanks{hexiaoyang@sjtu.edu.cn}$^\P$}
\author[1,2,3]{Shi Jin\thanks{shijin-m@sjtu.edu.cn}}
\author[1,3]{Xiao-Dong Zhang\thanks{xiaodong@sjtu.edu.cn}}
\affil[1]{School of Mathematical Sciences, Shanghai Jiao Tong University, Shanghai, 200240, China}
\affil[2]{Institute of Natural Sciences, Shanghai Jiao Tong University, Shanghai, 200240, China}
\affil[3]{Ministry of Education (MOE) Funded Key Lab of Scientific and Engineering Computing, Shanghai Jiao Tong University, Shanghai, 200240, China}

\date{}
\maketitle
\vspace{-2em}
\renewcommand{\thefootnote}{}
\footnotetext{\hspace{-0.4em}$^\P$These authors contributed equally to this work.}
\renewcommand{\thefootnote}{\arabic{footnote}}
\begin{abstract}
\par In this paper, we present a quantum implicit-explicit (IMEX) scheme for multiscale ordinary and partial differential equations whose discretization parameters are \textit{independent} of the scaling parameter $\varepsilon$. A key ingredient of our approach is a continuous-time formulation of classical IMEX schemes, which decouples the evolution time of the quantum algorithm from the physical time of the differential equation and is therefore particularly useful in multiscale settings. Building on this idea, we employ the Schr\"odingerization framework [Phys. Rev. Lett. 133 (2024), 230602] to implement IMEX schemes on quantum computers. Compared to previous HHL  type quantum AP scheme [J. Comput. Phys. 471 (2022), 111641], this new method requires narrower--an extra logarithmic factor-- auxiliary register numerical examples on linear heat and multiscale telegraph equations demonstrate the independence in $\varepsilon$ of the method. 
\end{abstract}
\textbf{Keywords}: Quantum IMEX Schemes, Schr\"odingerization Method, Asymptotic-Preserving Schemes, Multiscale Differential Equations.
\tableofcontents
\section{Introduction}
\par In the field of scientific computing, solving ordinary and partial differential equations via numerical methods is of great importance. However, when the number of degrees of freedom is sufficiently large, or when extremely high numerical resolution is required for multiscale problems or for large-scale simulations such as weather forecasting and turbulence, classical algorithms can face substantial computational challenges. In recent years, developing quantum algorithms to address scientific computing problems has attracted considerable interest. For example, 
Harrow, Hassidim, and Lloyd \cite{Harrow2009QuantumAF} introduced a quantum algorithm (the HHL algorithm) for solving large-scale linear algebraic systems, which can achieve exponential speedups under suitable assumptions; Berry \textit{et al.} \cite{Berry2015Hamiltonian} established a framework for quantum simulation \cite{Feynman1982Quantum} using Hamiltonian simulation.

\par At present, Hamiltonian simulation \cite{Lloyd1996UniversalQS,Berry2007Hamiltonians,Childs2010Quantum,Childs2012Hamiltonian} has demonstrated several quantum advantages in this area. For general ordinary differential equations (ODEs) and partial differential equations (PDEs), embedding non-Hamiltonian equations into the Hamiltonian-simulation framework is not only of significant scientific interest but also a theoretical challenge. Several approaches have been proposed recently to address this issue. The Schr\"odingerization method introduced by Jin \textit{et al.} \cite{Jin2024Schrodingerization} maps general linear ODEs and PDEs to higher-dimensional unitary evolutions that are suitable for quantum simulation, while a related method, the Linear Combination of Hamiltonian Simulation (LCHS) framework proposed by An \textit{et al.} \cite{An2023QuantumAF}, represents non-unitary dynamics as a linear combination of unitary operators.

\par In this paper, we focus on multiscale ODEs and PDEs, which are widely encountered in the physical sciences. These equations involve widely separated temporal or spatial scales, making it a central challenge to capture cross-scale interactions. In many cases, resolving the small physical scales numerically requires discretization parameters that depend on the small scaling parameter and therefore becomes prohibitively expensive on classical computers. Similarly, when Hamiltonian simulation is applied to multiscale dynamics such as $\frac{\d u(t)}{\d t}=\varepsilon^{-1}(L u(t)+b)$ with $t \in [0,T]$ and initial condition $u(0) = u_0$, the small parameter $\varepsilon$ remains a major bottleneck. Direct LCHS- or Schr\"odingerization-based algorithms, as well as their optimal improvements, still retain explicit $\varepsilon$-dependence in the query complexity \cite{An2023QuantumAF,Jin2024Schrodingerization,Low2025Optimal,Jin2025Optimal}. More recently, An \textit{et al.} \cite{An2026Fastforwarding} showed that dissipative ODEs with uniformly negative logarithmic norm can be fast-forwarded, obtaining polylogarithmic history-state complexity for a truncated Dyson-series method and $\sqrt{T}$-type final-state complexity in that dissipative setting. However, under the multiscale specialization relevant here, their final-state bounds still retain a $\sqrt{\varepsilon^{-1}}$-type dependence, and the forward-Euler-based variant also carries the target-accuracy factor $\delta^{-1}$. Meanwhile, HHL-based asymptotic-preserving approaches for specific multiscale PDEs can avoid explicit $\varepsilon$-dependence, but they rely on QLSA-type subroutines and related techniques that are significantly more complex to implement in practice \cite{Jin2022TimeCA}. The comparison is summarized in Tab.~\ref{table:1}.

\begin{table*}[htbp]
\centerline{
\resizebox{\textwidth}{!}{
\begin{tabular}{c|c|c|c|c}
\hline
\hline
    \makecell*[c]{Year}&
    \makecell*[c]{Reference}&
    \makecell*[c]{Query Complexity}&
    \makecell*[c]{Core Idea}&
    \makecell*[c]{Challenges}\\
\hline
\hline
    \makecell*[c]{2023}&
    \makecell*[c]{An \textit{et al.}\\ \cite{An2023QuantumAF}}&
    \makecell*[c]{$\mathcal{O}\left(\|L\|_{\max}\varepsilon^{-1}T\delta^{-1}\right)$}&
    \makecell*[c]{Direct application of linear combination\\ of Hamiltonian simulation (LCHS).}&
    \multirow{4}{*}{\makecell*[c]{\vspace{2em}\\ Query complexity depends explicitly\\ on the multiscale parameter\\ $\varepsilon^{-1}$.}}\\
\cline{1-4}
    \makecell*[c]{2024}&
    \makecell*[c]{Jin \textit{et al.}\\ \cite{Jin2024Schrodingerization}}&
    \makecell*[c]{$\mathcal{O}\left(\|L\|_{\max}\varepsilon^{-1}T\delta^{-1}\right)$}&
    \makecell*[c]{Direct application of Schr\"odingerization \cite{Jin2024Schrodingerization}.}&\\
\cline{1-4}
    \makecell*[c]{2025}&
    \makecell*[c]{Low \textit{et al.}\\  \cite{Low2025Optimal}}&
    \makecell*[c]{$\mathcal{O}\left(\|L\|_{\max}\varepsilon^{-1}T\log\delta^{-1}\right)$}&
    \makecell*[c]{Optimal LCHS.}&\\
\cline{1-4}
    \makecell*[c]{2025}&
    \makecell*[c]{Jin \textit{et al.}\\  \cite{Jin2025Optimal}}&
    \makecell*[c]{$\mathcal{O}\left(\|L\|_{\max}\varepsilon^{-1}T\log\delta^{-1}\right)$}&
    \makecell*[c]{Optimal Schr\"odingerization.}&\\
\hline
    \makecell*[c]{2026}&
    \makecell*[c]{An \textit{et al.}\\  \cite{An2026Fastforwarding}}&
    \makecell*[c]{$\mathcal{O}\left(\frac{\sqrt{\varepsilon^{-1} T}\|L\|_{\max}(\log\delta^{-1})^2}{\left(-\sup\limits_{t\in[0,T]}\lambda_{\max}\left(\frac{L(t)+L(t)^\dagger}{2}\right)\right)^{\frac{1}{2}}}\right)$}&
    \makecell*[c]{Based on truncated Dyson series.}&
    \multirow{2}{*}{\makecell*[c]{The final-state specialization used here still\\ depends on $\varepsilon^{-1}$, and the Euler-based\\ variant also retains a $\delta^{-1}$ factor.}}\\
\cline{1-4}
    \makecell*[c]{2026}&
    \makecell*[c]{An \textit{et al.}\\  \cite{An2026Fastforwarding}}&
    \makecell*[c]{$\mathcal{O}\left(\sqrt{\varepsilon^{-1} T}\delta^{-1}\right)$}&
    \makecell*[c]{Based on forward Euler method and\\ quantum linear system algorithms (QLSA).}&\\
\hline
    \makecell*[c]{2026}&
    \makecell*[c]{This paper}&
    \makecell*[c]{$\mathcal{O}\left(\frac{T\delta^{-1}(\log\delta^{-1})^3}{\left(-\sup\limits_{t\in[0,T]}\lambda_{\max}\left(\frac{L(t)+L(t)^\dagger}{2}\right)\right)^3}\right)$}&
    \makecell*[c]{Quantum implicit-explicit schemes\\ combined with the Schr\"odingerization method.}&
    \makecell*[c]{No $\varepsilon^{-1}$ dependence, so the query\\ complexity remains insensitive to stiffness\\ even when $\varepsilon\ll\delta$.}\\
\hline
\hline
\end{tabular}}}
\caption{\label{table:1}\textbf{Comparison of Quantum Algorithms for Solving Multiscale Ordinary and Partial Differential Equations.} We assume that the entries of $L(t)$ and $b(t)$ are of order $\mathcal{O}(1)$, and that the solution $u(t)$ is also of order $\mathcal{O}(1)$. $N_t$ is the number of time steps in the IMEX scheme and is independent of the multiscale parameter $\varepsilon$. For An \textit{et al.} \cite{An2026Fastforwarding}, the listed bounds correspond to final-state preparation specialized to the multiscale scaling in the dissipative setting considered there.}
\end{table*}

\par In classical computation, a popular and efficient strategy is to develop Asymptotic-Preserving (AP) schemes \cite{Jin2022AsymptoticpreservingSF}, which are designed to ensure that the numerical method automatically captures the correct macroscopic behavior even as the scale parameter $\varepsilon$ tends to zero, allowing numerical parameters--and hence computational complexity--to be \textit{independent} of $\varepsilon$. The first quantum AP scheme for multiscale PDEs was proposed in \cite{Jin2022TimeCA}, which gives an HHL-based quantum algorithm whose computational complexity is independent of $\varepsilon$. Implicit-Explicit (IMEX) schemes \cite{Ascher1995IMEX,Ascher1997IMEXRK,Sebastiano2023IM,Li2025IMEX} are simple yet effective AP schemes for time scaling and are widely used to address numerical stiffness and other multiscale challenges caused by small $\varepsilon$ in classical computation. This naturally raises the question of how IMEX schemes can be integrated into the Schr\"odingerization framework, which provides a systematic route to quantum simulation of PDEs and can achieve optimal or near-optimal complexity \cite{Jin2025LinearNonUnitary}.

\par In this article, we propose a general framework for quantum IMEX schemes that can be incorporated into Hamiltonian simulation, thereby providing a practical route to solving multiscale PDEs on quantum computers. Unlike traditional Schr\"odingerization-based methods for linear dynamical systems, we first use an IMEX scheme to transform the given ODE into a linear-system problem and then reformulate this linear system as a higher-dimensional ODE through continuous-time Richardson iteration. While this "ODE $\to$ Linear System $\to$ Equivalent ODE" transformation may seem counterintuitive, it yields an equivalent ODE whose query complexity is \textit{independent} of the scaling parameter $\varepsilon$ (see Theorem \ref{theorem:main}) and provides a general framework for multiscale equations.

\par To validate the theory, we present simulations for two representative PDEs, namely heat equations with stiff terms and multiscale telegraph equations. Our advantage over QLSA-based AP solvers is not query complexity but hardware overhead. For HHL-type implementations, the main auxiliary-register overhead comes from the phase-estimation register. A recent HHL analysis makes this dependence explicit: if the clock register has $n_c$ qubits, then its maximum eigenvalue-estimation precision is $2^{-n_c}$ \cite{Ginzburg2024QLSA}; therefore, achieving phase-estimation accuracy $\delta$ requires $n_c=\mathcal{O}(\log\delta^{-1})$. In addition, HHL uses one reciprocal-eigenvalue ancilla and only $\mathcal{O}(1)$ further work qubits \cite{Harrow2009QuantumAF}. Precision-improved VTAA/RM variants still require additional control ancillas and subroutines \cite{Childs2017SIAM,Subasi2019PRL}. Although neither class of algorithms is fully NISQ-ready, a narrower auxiliary register is still preferable on width-limited devices and in small-scale proof-of-principle demonstrations, since it reduces the extra qubit footprint and avoids reciprocal-eigenvalue subroutines. By contrast, our Schr\"odingerized IMEX implementation only needs the auxiliary $p$-register introduced later and $\mathcal{O}(1)$ additional ancillas. In the discrete Schr\"odingerization framework one has $N_p=2^{n_p}$ Fourier modes in the $p$ direction, so the register width is $n_p=\log_2 N_p$. More precisely, the optimal smooth-initialization analysis of \cite{Jin2025Optimal} yields a logarithmic bound for the largest Fourier mode cutoff, and in the present normalization $\mu_\ell=\pi \ell$ this means $\|D_\mu\|_{\max}=\Theta(N_p)=\mathcal{O}(\log\delta^{-1})$; therefore $n_p=\mathcal{O}(\log\log\delta^{-1})$. For the polynomially conditioned linear systems arising from the PDE discretizations considered here, the present approach reduces the auxiliary-register width by at least a logarithmic factor in $N_x$ and avoids reciprocal-eigenvalue subroutines. A comparison with existing quantum algorithms for these two equations is summarized in Tab.~\ref{table:2}.

\begin{table*}[htbp]
\centerline{
\resizebox{\textwidth}{!}{
\begin{tabular}{c|c|c|c|c|c|c}
\hline
\hline
    \makecell*[c]{Year}&
    \makecell*[c]{Reference}&
    \makecell*[c]{Equation}&
    \makecell*[c]{Query Complexity}&
    \makecell*[c]{Auxiliary Register\\ Width}&
    \makecell*[c]{Core Idea}&
    \makecell*[c]{Remarks}\\
\hline
\hline
    \multirow{2}{*}{\makecell*[c]{\vspace{0em}\\-}}&
    \multirow{2}{*}{\makecell*[c]{\vspace{-0.5em}\\Classical IMEX\\ baseline}}&
    \makecell*[c]{Heat}&
    \makecell*[c]{$\mathcal{O}(N_x^3)$}&
    \makecell*[c]{N/A}&
    \multirow{2}{*}{\makecell*[c]{\vspace{0em}\\Classical IMEX discretization with\\ a sparse linear solve at each step.}}&
    \multirow{2}{*}{\makecell*[c]{\vspace{0em}\\Higher asymptotic cost than the quantum methods listed below.}}\\
\cline{3-5}
    &&
    \makecell*[c]{Multiscale\\ Telegraph}&
    \makecell*[c]{$\mathcal{O}(N_x^3)$}&
    \makecell*[c]{N/A}&\\
\hline
    \makecell*[c]{2022}&
    \makecell*[c]{Jin \textit{et al.}\\ \cite{Jin2022TimeCA}}&
    \makecell*[c]{Heat}&
    \makecell*[c]{$\mathcal{O}(N_x^2\log N_x)$}&
    \makecell*[c]{$\mathcal{O}(\log N_x)$}&
    \multirow{2}{*}{\makecell*[c]{\vspace{-0.5em}\\AP finite-difference discretization\\ solved by an HHL-type QLSA.}}&
    \multirow{2}{*}{\makecell*[c]{\vspace{-1.8em}\\Competitive query complexity, but the eigenvalue-estimation,\\ reciprocal-eigenvalue, and VTAA/RM subroutines increase\\ circuit width and implementation complexity\\ \cite{Childs2017SIAM,Subasi2019PRL,Jin2025Precondition}.}}\\
\cline{1-5}
    \makecell*[c]{2022/2023}&
    \makecell*[c]{Jin \textit{et al.} \cite{Jin2022TimeCA}\\ He \textit{et al.} \cite{He2023TimeCA}}&
    \makecell*[c]{Multiscale\\ Telegraph}&
    \makecell*[c]{$\mathcal{O}(N_x^2\log N_x)$}&
    \makecell*[c]{$\mathcal{O}(\log N_x)$}&\\
\hline
    \makecell*[c]{2024/2025}&
    \makecell*[c]{Jin \textit{et al.}\\ \cite{Jin2024Schrodingerization,Jin2025Optimal}}&
    \makecell*[c]{Heat}&
    \makecell*[c]{$\mathcal{O}\left(\varepsilon^{-1}N_x^2(\log N_x)^2\right)$}&
    \makecell*[c]{$\mathcal{O}\left(\log\log N_x\right)$}&
    \makecell*[c]{\makecell*[c]{Direct Schr\"odingerization, with the optimal\\ smooth-initialization refinement of \cite{Jin2025Optimal}.}}&
    \makecell*[c]{\makecell*[c]{Narrower auxiliary-register width than HHL-type QLSA methods,\\ but the query complexity still depends explicitly on $\varepsilon^{-1}$.}}\\
\hline
    \multirow{2}{*}{\makecell*[c]{\vspace{0em}\\2025}}&
    \multirow{2}{*}{\makecell*[c]{\vspace{0em}\\This paper}}&
    \makecell*[c]{Heat}&
    \makecell*[c]{$\mathcal{O}(N_x^2(\log N_x)^3)$}&
    \makecell*[c]{$\mathcal{O}(\log\log N_x)$}&
    \multirow{2}{*}{\makecell*[c]{AP IMEX discretization followed by\\ Schr\"odingerization of the equivalent\\ continuous-time Richardson system.}}&
    \multirow{2}{*}{\makecell*[c]{No explicit $\varepsilon^{-1}$ dependence; narrower auxiliary\\ register than HHL-based AP solvers; avoids reciprocal-eigenvalue\\ subroutines; nearly optimal complexity up to logarithmic factors.}}\\
\cline{3-5}
    &&
    \makecell*[c]{Multiscale\\ Telegraph}&
    \makecell*[c]{$\mathcal{O}(N_x^{2+o(1)}(\log N_x)^3)$}&
    \makecell*[c]{$\mathcal{O}\left(\log\log N_x\right)$}&\\
\hline
\hline
\end{tabular}}}
\caption{\label{table:2}\textbf{Comparison of Existing Algorithms for Solving Heat Equations and the Multiscale Telegraph Equation.} Under the same assumptions as in Tab.~\ref{table:1}. In the query-complexity column we suppress fixed dependence on the spatial dimension; for the Schr\"odingerization-based rows, the remaining logarithmic accuracy factors are rewritten using the common identification $N_x=\delta^{-1}$. The classical baseline counts one sparse linear solve per IMEX time step. The auxiliary-register-width column records the dominant extra qubit overhead beyond the system register and is also written in terms of $N_x$: it is $\mathcal{O}(\log N_x)$ for the HHL/AP rows from phase estimation and $\mathcal{O}(\log\log N_x)$ for the direct Schr\"odingerization rows from the logarithmic Fourier-mode cutoff in \cite{Jin2025Optimal}.}
\end{table*}

\par The rest of this paper is organized as follows. In Section 2, we review the basic Schr\"odingerization method, focusing on the algorithm for linear time-independent systems. In Sections 3 and 4, we present our quantum IMEX schemes and their specific procedures, including the algorithmic framework and query-complexity estimates. In Section 5, we provide discretization schemes and simulation results for specific multiscale PDEs. In the Appendix, we present proofs of the main theorems.

\section{Review of the Schr\"odingerization Method}
\label{section:schr}
\par In this section, we briefly review the Schr\"odingerization method for linear dynamical systems with time-independent constant coefficients:
\begin{gather}
    \nonumber\frac{\d u(t)}{\d t}=-Hu(t),\quad t\in[0,T],\\
    \text{with }u(0)=u_0,
    \label{equ:schrodingerization:1}
\end{gather}
where $H$ is a constant matrix of size $N\times N$ with $\lambda_{\min}\left(\frac{H+H^\dagger}{2}\right)>0$. The matrix $H$ can be decomposed into the sum of a Hermitian matrix and an anti-Hermitian matrix as follows:
\begin{equation*}
    \begin{aligned}
    \label{equ:schrodingerization:2}
        H=H_1+iH_2,\quad H_1=\frac{H+H^\dagger}{2},\quad H_2=\frac{H-H^\dagger}{2i}.
    \end{aligned}
\end{equation*}
We apply the warped phase transformation $u_{\text{warp}}(t,p)=e^{-p}u(t)$, in which $p>0$ (and can be symmetrically extended to $p<0$). This allows Eq.~(\ref{equ:schrodingerization:1}) to be rewritten as
\begin{gather}
    \nonumber\frac{\d u_{\text{warp}}(t,p)}{\d t}=-H_1\partial_p u_{\text{warp}}(t,p)-iH_2u_{\text{warp}}(t,p)\text{, $t\in[0,T]$,}\\
    \text{with }u_{\text{warp}}(0,p)=e^{-\vert p\vert}u(0),
    \label{equ:schrodingerization:3}
\end{gather}
which is hyperbolic. This lack of regularity in the initial data can be remedied. See \cite{Jin2025LinearNonUnitary} for a choice of smooth initial data in $p$ that yields nearly optimal or even optimal complexity.

\subsection{The Discrete Schr\"odingerization Method}

\par We apply the discrete Fourier transform to $p$ on the interval $[L,R]$, discretized as $L=p_0<p_1<\cdots<p_{N_p}=R$, where $\Delta p=\frac{R-L}{N_p}$ and $p_k=L+k\Delta p$. Here $L$ and $R$ should be chosen sufficiently large so that the solution is negligible at the endpoints of the $p$-domain. Define the vector $u_{\text{Four}}(t)$ as
\begin{equation*}
    \begin{aligned}
        u_{\text{Four},i}(t)=\sum\limits_{k=0}^{N_p-1} u_{\text{warp},i}(t,p_k)|k\rangle,\quad u_{\text{Four}}(t) = [u_{\text{Four},1}(t);\cdots;u_{\text{Four},N}(t)],
    \end{aligned}
\end{equation*}
in which both $u_{\text{Four},i}(t)$ and $u_{\text{warp},i}(t)$ represent the values in the $i$-th component of their corresponding vectors. In the discrete Fourier space, one obtains
\begin{gather}
    \nonumber\frac{\d}{\d t}u_{\text{Four}}(t)=-i(H_1\otimes P_{\mu})u_{\text{Four}}(t)-i(H_2\otimes I)u_{\text{Four}}(t),\\
    u_{\text{Four}}(0) = [e^{-|p_0|};\cdots;e^{-|p_{N_p-1}|}] \otimes u_0,
    \label{equ:schrodingerization:4}
\end{gather}
where $P_{\mu}$ denotes the discrete momentum operator $-i\partial_p$ expressed in matrix form using discretization in space. The diagonalization of $P_{\mu}$ is achieved through the transformation $D_{\mu}=\phi^{-1}P_{\mu}\phi$, where $D_{\mu}$ is a diagonal matrix with entries $\mu_{-N_p/2}$ through $\mu_{N_p/2-1}$, and $\phi$ is defined as $\phi_{j\ell}=\phi_\ell(p_j)$ with $\phi_\ell(p)=e^{i\mu_\ell(p-L)}$. These diagonal elements are defined at $\mu_{\ell}=\pi \ell$ for integer values $\ell$ ranging from $-N_p/2$ to $N_p/2-1$. Applying the variable transformation $u_{\text{schr}}=(I\otimes\phi^{-1})u_{\text{Four}}$, we have
\begin{gather}
    \nonumber\frac{\d}{\d t}u_{\text{schr}}(t)=-i(H_1\otimes D_{\mu}+H_2\otimes I)u_{\text{schr}}(t):=-iH_{\text{schr}}\cdot u_{\text{schr}}(t),\\
    \text{with }u_{\text{schr}}(0)=(\phi^{-1}\otimes I)u_{\text{Four}}(0).
    \label{equ:schrodingerization:5}
\end{gather}
Through the Schr\"odingerization method, one transforms the homogeneous equation in Eq.~(\ref{equ:schrodingerization:1}) into the dimension-lifted equation shown in Eq.~(\ref{equ:schrodingerization:5}), where $H_{\text{schr}}$ is a Hermitian matrix. This enables us to simulate the system in Eq.~(\ref{equ:schrodingerization:5}) on a quantum computer. 

\subsection{Reconstruction of the Solution}
\par If the eigenvalues of $H_1$ are all non-positive, then $u(t)$ can be recovered from $u_{\text{warp}}$ by the following two methods: the single-point method and the integral method:
\begin{equation*}
    \begin{aligned}
    &\text{The single-point method: }u(t)=e^{p_R}u_{\text{warp}}(t,p_R), \quad \text{for any } p_R>0, \\
    &\text{The integral method: }u(t)=\frac{1}{e^{p_R}-1}\int_0^{p_R} u_{\text{warp}}(t,q)\d q.
    \end{aligned}
\end{equation*}
However, if $H_1$ contains positive eigenvalues, then spurious solutions may appear in the region $p>0$ \cite{Jin2024IllPosed}. Therefore, when reconstructing $u(t)$, the correct domain must be selected to avoid them. As established in \cite{Jin2025LinearSystems}, the following result holds.
\begin{theorem}
\par  If the largest eigenvalue of $H_1$ is positive, denoted by $\lambda_1(H_1)>0$, while the remaining eigenvalues are ordered as $\lambda_1(H_1)\ge\lambda_2(H_1)\ge\cdots\ge\lambda_N(H_1)$, the solution to Eq.~(\ref{equ:schrodingerization:1}) can be reconstructed by
\begin{equation}
    \begin{aligned}
    \label{equ:schrodingerization:6}
        u(t)=e^p u_{\text{warp}}(t,p)\text{, for any $p>p^\Diamond$},
    \end{aligned}
\end{equation}
where $p^\Diamond=\max\left\{\lambda_1(H_1)T,0\right\}$, or 
\begin{equation}
    \begin{aligned}
    \label{equ:schrodingerization:7}
        u(t)=e^p\int_p^{\infty} u_{\text{warp}}(t,q)\d q\text{, for any $p>p^\Diamond$}.
    \end{aligned}
\end{equation}
\end{theorem}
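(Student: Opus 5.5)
The plan is to reduce the statement to a finite-speed-of-propagation (domain of dependence) argument for the symmetric hyperbolic system \eqref{equ:schrodingerization:3} posed on the whole line $p\in\mathbb{R}$. Write that system as $\partial_t w + A\,\partial_p w + iH_2 w=0$. Here $A$ is the Hermitian matrix multiplying $\partial_p$. In the convention that makes $e^{-p}u(t)$ an exact solution of \eqref{equ:schrodingerization:3}, $A$ is $H_1$ up to the sign fixed by the warped transformation, and the hypothesis on $\lambda_1(H_1)$ is exactly the statement that the largest characteristic speed $c:=\lambda_{\max}(A)$ satisfies $c\le\lambda_1(H_1)$. I will fix this sign bookkeeping once at the start by checking directly that $w_{\rm ex}(t,p):=e^{-p}u(t)$ solves the transport system for every $p\in\mathbb{R}$: since $\partial_p w_{\rm ex}=-w_{\rm ex}$, the system collapses to the ODE \eqref{equ:schrodingerization:1} for $u$.

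\textbf{Step 1 (splitting).} Set $d(t,p):=u_{\text{warp}}(t,p)-e^{-p}u(t)$. By linearity $d$ solves the same hyperbolic system, with initial data $d(0,p)=(e^{-|p|}-e^{-p})u_0$. This vanishes identically for $p\ge 0$, so $\operatorname{supp} d(0,\cdot)\subset(-\infty,0]$.

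\textbf{Step 2 (energy estimate on a moving half-line).} For a fixed $a\ge 0$, define $E(t):=\int_{a+ct}^{\infty}|d(t,p)|^2\,\d p$ with $c=\max\{\lambda_{\max}(A),0\}$. Differentiating and using the equation, the term $iH_2$ drops out because it is anti-Hermitian. The transport term integrates to a boundary term $d^\dagger A d$ at $p=a+ct$, and the moving endpoint contributes $-c|d|^2$. Altogether
\[
E'(t)=d^\dagger (A-cI)\,d\,\big|_{p=a+ct}\le 0 .
\]
Since $E(0)=0$ for $a\ge0$, we get $E(t)=0$, so $d(t,p)=0$ for all $p>ct$. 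This yields $u_{\text{warp}}(t,p)=e^{-p}u(t)$, and hence \eqref{equ:schrodingerization:6}, for every $p>ct$, in particular for all $p>p^\Diamond=\max\{\lambda_1(H_1)T,0\}$ and all $t\le T$.

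\textbf{Step 3 (integral formula).} For $p>p^\Diamond$ the whole interval $[p,\infty)$ lies in the region where $u_{\text{warp}}(t,q)=e^{-q}u(t)$. Therefore
\[
e^{p}\int_p^\infty u_{\text{warp}}(t,q)\,\d q=e^{p}\,e^{-p}u(t)=u(t),
\]
which is \eqref{equ:schrodingerization:7}.

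The main obstacle is regularity. The initial data $e^{-|p|}u_0$ is only Lipschitz, so the energy computation of Step 2 must be justified. I would first run the argument for smooth, compactly supported mollifications of the data that still vanish near $p\ge 0$, or equivalently use that $e^{-|p|}$ is smooth on $p>0$. I would then pass to the limit using the $L^2$ stability of the symmetric hyperbolic system, which is again the same energy identity on all of $\mathbb{R}$. A secondary point is pinning down the sign convention relating $A$ to $H_1$ so that the characteristic speed bounding the spurious region is indeed $\lambda_1(H_1)$. I expect the verification in the opening paragraph, that $e^{-p}u(t)$ is an exact solution, to settle this.
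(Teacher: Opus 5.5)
The paper gives no proof of this statement; it is quoted from \cite{Jin2025LinearSystems}, so there is no in-paper argument to compare with. Your finite-propagation-speed argument is the standard way to prove such recovery results: subtract the exact solution $e^{-p}u(t)$, then run a half-line energy estimate with a moving endpoint. The computations in Steps 1--3 are correct. With $\partial_t d+A\partial_p d+iH_2d=0$ you do get $E'(t)=d^\dagger(A-cI)d\,|_{p=a+ct}\le 0$, and the integral formula follows at once from $u_{\text{warp}}(t,q)=e^{-q}u(t)$ on $[p,\infty)$.

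\textbf{The sign of $A$.} What you call secondary bookkeeping is the only place the hypothesis enters, and your proposed way of settling it does not give what you assert. Suppose you fix $A$ by requiring that $e^{-p}u(t)$ solve the system with $u$ solving \eqref{equ:schrodingerization:1}, i.e.\ $u'=-Hu$ with $H_1=(H+H^\dagger)/2$. Then $\partial_p(e^{-p}u)=-e^{-p}u$ forces $A=-H_1$. The largest speed is therefore $-\lambda_N(H_1)$, and ``$c\le\lambda_1(H_1)$'' does not follow from $\lambda_1(H_1)>0$.

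Concretely, take $H=H_1=\mathrm{diag}(1,-5)$ and $H_2=0$. The second component satisfies $\partial_t w_2+5\partial_p w_2=0$ with $w_2(0,p)=e^{-|p|}u_{0,2}$. Hence $e^{p}w_2(T,p)=e^{2p-5T}u_{0,2}\neq e^{5T}u_{0,2}=u_2(T)$ for $p\in(T,5T)$, although $p^\Diamond=T$. So in that reading the stated threshold is not valid in general. Within the paper it is rescued only by the standing assumption $\lambda_{\min}(H_1)>0$ of Section~2, which gives $c=0$; you would have to invoke that explicitly.

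The printed \eqref{equ:schrodingerization:3} does not resolve this either: $e^{-p}u(t)$ solves it only if $u'=H^\dagger u$, so it is inconsistent with \eqref{equ:schrodingerization:1}.

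The threshold $\lambda_1(H_1)T$ is correct and sharp when $H_1$ is the Hermitian part of the \emph{generator}. That is, $u'=Gu$ with $H_1=(G+G^\dagger)/2$ and $H_2=(G-G^\dagger)/(2i)$, giving the warped system $\partial_t w+H_1\partial_p w-iH_2w=0$. This is also the only reading consistent with the paper's preceding remark that non-positive eigenvalues of $H_1$ need no shift. State this convention at the outset. Then $A=H_1$, $c=\max\{\lambda_1(H_1),0\}$, and $ct\le p^\Diamond$ for $t\le T$, so your proof goes through verbatim. The sign of the $H_2$ term does not matter, since it drops out of the energy identity.

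\textbf{Regularity.} The difficulty is smaller than you suggest, but your fix is aimed at the wrong function. Since $e^{-|p|}\in H^1(\mathbb{R})$ and $\xi A+H_2$ is Hermitian for every $\xi$, the Fourier transform gives $u_{\text{warp}}\in C([0,T];H^1)\cap C^1([0,T];L^2)$. That already justifies the moving-endpoint identity without mollification; alternatively, integrate it over the trapezoid. By contrast $d\notin L^2(\mathbb{R})$: for $p<0$, $d(0,p)=(e^{p}-e^{-p})u_0$ grows like $e^{|p|}$. So any global $L^2$ stability argument must be applied to $u_{\text{warp}}$, not to $d$. Your functional $E$ only sees $d$ on $[a+ct,\infty)$, where it is $H^1$ and vanishes at $+\infty$.
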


\section{Quantum IMEX Schemes for Equations with Time-Dependent Coefficients}
\label{section:timede}

\par In this section, we present the quantum IMEX schemes in detail. Consider the dynamical system with time-dependent coefficients and initial condition:
\begin{gather}
    \label{equ:structure:2}\frac{\d u(t)}{\d t}=L(t)u(t)+b(t),\quad t\in[0,T],\quad u(0)=u_0.
\end{gather}
in which $L(t)$ and $b(t)$ are time-dependent matrices of sizes $N_x\times N_x$ and $N_x\times 1$, respectively, and $L(t)$ can be decomposed as $L(t) = \varepsilon^{-1} L_1(t) + L_2(t)$, where $\varepsilon^{-1} L_1(t)$ represents the stiff terms with $\varepsilon$ being the scaling parameter, and $L_2(t)$ represents the non-stiff terms. Similarly, $b(t)$ can be decomposed into $\varepsilon^{-1} b_1(t)$ and $b_2(t)$, with $L_1(t)$, $L_2(t)$, $b_1(t)$, $b_2(t)$, and $u_0$ being of order $\mathcal{O}(1)$. A simple implicit-explicit (IMEX) scheme is then applied:
\begin{equation*}
    \begin{aligned}
        \frac{u_{n+1}-u_n}{\tau} = \varepsilon^{-1}L_1((n+1)\tau)u_{n+1}+L_2(n\tau)u_n+\varepsilon^{-1}b_1(n\tau)+b_2(n\tau),
    \end{aligned}
\end{equation*}
that is
\begin{equation}
    \begin{aligned}
        \label{equ:structure:1}
        P_n u_{n+1}=Q_n u_n&+b_n\text{, }n=0,1\cdots,
    \end{aligned}
\end{equation}
where $P_{n}=\varepsilon I-\tau L_1((n+1)\tau)$, $Q_n=\varepsilon (I+\tau L_2(n\tau))$, and $b_n=\tau b_1(n\tau)+\tau\varepsilon b_2(n\tau)$; these quantities are introduced for convenience in the subsequent analysis.
Our algorithm requires that the spectra of $P_n$ and $Q_n$ satisfy certain conditions, and the specific requirements are given by the constraints in Lemma \ref{lemma:main:1}. Generally, when the scaling parameter $\varepsilon \to 0$, the condition reduces to $\sup\limits_{t\in[0,T]}\lambda_{\max}\left(\frac{L_1(t)+L_1(t)^\dagger}{2}\right)<0$, meaning that only $P_n$, which contains the stiff term, is restricted.

\subsection{Estimation for Discretization Numbers}
\label{section:discretization}
\par We now estimate the number of time steps required by the IMEX discretization in Eq.~(\ref{equ:structure:1}). The goal is to determine a sufficient resolution $N_t$ such that the relative global error at the final time $T$ remains below a prescribed tolerance $\delta>0$. Let $t_n=n\tau$ with $\tau=T/N_t$, and define the global error by $e_n=u(t_n)-u_n$, where $u(t_n)$ denotes the exact solution of the continuous problem (\ref{equ:structure:2}) and $u_n$ the numerical approximation. We seek $N_t$ such that 
\begin{equation}
    \begin{aligned}
        \label{equ:euler:3}
        \frac{\|u(T)-u_{N_t}\|}{\|u(T)\|}=\frac{\|e_{N_t}\|}{\|u(T)\|}<\delta,
    \end{aligned}
\end{equation}
which requires a sharp estimate of the accumulated error $e_{N_t}$ for the IMEX scheme.

\par Substituting the exact solution into the discrete scheme of Eq.~(\ref{equ:structure:1}), we define the local defect (or truncation error) as
\begin{equation*}
    \begin{aligned}
        \eta_n:=P_nu(t_{n+1})-Q_nu(t_n)-b_n,\quad n=0,\cdots,N_t-1.
    \end{aligned}
\end{equation*}
This quantity quantifies the extent to which the exact solution fails to satisfy the numerical update formula at a single step. Since the initial data are exact, $e_0=0$, the propagation of the global error obeys the linear recurrence
\begin{equation*}
    \begin{aligned}
        P_n e_{n+1}=Q_n e_n+\eta_n,\qquad
        e_{n+1}=P_n^{-1}Q_n e_n+P_n^{-1}\eta_n.
    \end{aligned}
\end{equation*}
Thus, the evolution of the error is governed by the amplification matrix $P_n^{-1}Q_n$ and the local perturbation $P_n^{-1}\eta_n$.

\par To bound $\eta_n$, we add and subtract $\tau\varepsilon u^\prime(t_{n+1})$ and then employ the governing Eq.~(\ref{equ:structure:2}). A straightforward algebraic manipulation yields
\begin{equation*}
    \begin{aligned}
        \eta_n
        &=\varepsilon\bigl[u(t_{n+1})-u(t_n)-\tau u^\prime(t_{n+1})\bigr] +\tau\varepsilon\bigl[L_2(t_{n+1})u(t_{n+1})-L_2(t_n)u(t_n)\bigr]\\
        &\quad +\tau\bigl[b_1(t_{n+1})-b_1(t_n)\bigr]+\tau\varepsilon\bigl[b_2(t_{n+1})-b_2(t_n)\bigr].
    \end{aligned}
\end{equation*}
This decomposition isolates the temporal discretization errors associated with the stiff and non-stiff components. Using Taylor expansion with integral remainder and the mean-value theorem, each bracketed difference can be bounded by the corresponding first or second derivative of the involved functions. Consequently, we obtain the local error estimate
\begin{equation}
    \begin{aligned}
        \label{equ:euler:4}
        \|\eta_n\|\le \Bigl(\frac{\varepsilon}{2}\|u^{\prime\prime}\|_{\max}+\varepsilon\|(L_2u)^\prime\|_{\max}+\|b_1^\prime\|_{\max}+\varepsilon\|b_2^\prime\|_{\max}\Bigr)\tau^2,
    \end{aligned}
\end{equation}
where the notation $\|f\|_{\max}=\sup_{t\in[0,T]}\|f(t)\|$ denotes the maximal norm of a function over the time interval. We emphasize that the local error is of order $\mathcal{O}(\tau^2)$ with constants that exhibit only weak dependence on the stiffness parameter $\varepsilon$; in particular, the stiff part of the source term $b_1$ enters without an inverse power of $\varepsilon$, which is crucial for the subsequent uniform-in-$\varepsilon$ convergence analysis.

\par For the global error to remain controlled, the iteration matrices must not amplify perturbations excessively. Under the spectral condition $\sup_{t\in[0,T]}\lambda_{\max}\Bigl(\frac{L_1(t)+L_1(t)^\dagger}{2}\Bigr)<0$, which ensures that the stiff linear part is strictly dissipative, and assuming $\varepsilon$ is sufficiently small, one can derive explicit bounds on the operator norms of $P_n^{-1}$ and $P_n^{-1}Q_n$. Specifically,
\begin{equation*}
    \begin{aligned}
        \|P_n^{-1}\|_2&\le\frac{1}{\varepsilon-\tau\sup\limits_{t\in[0,T]}\lambda_{\max}\bigl(\frac{L_1(t)+L_1(t)^\dagger}{2}\bigr)},\quad 
        \|P_n^{-1}Q_n\|_2&\le\frac{\varepsilon\bigl(1+\tau\sup_{t\in[0,T]}\|L_2(t)\|_2\bigr)}{\varepsilon-\tau\sup\limits_{t\in[0,T]}\lambda_{\max}\bigl(\frac{L_1(t)+L_1(t)^\dagger}{2}\bigr)}<1.
    \end{aligned}
\end{equation*}
The strict inequality for $\|P_n^{-1}Q_n\|_2$ guarantees that the error propagation is contractive, preventing the accumulation of local defects. Iterating the one-step error recurrence and employing the geometric series summation, we arrive at the global error estimate
\begin{equation}
    \begin{aligned}
        \label{equ:euler:5}
        \|e_{N_t}\|
        &\le \sum_{k=0}^{N_t-1}\Bigl(\max_{n}\|P_n^{-1}Q_n\|_2\Bigr)^{N_t-1-k}\|P_k^{-1}\|_2\,\|\eta_k\|\\
        &\le \frac{\max\limits_{n}\|P_n^{-1}\|_2\bigl(\frac{\varepsilon}{2}\|u^{\prime\prime}\|_{\max}+\varepsilon\|(L_2u)^\prime\|_{\max}+\|b_1^\prime\|_{\max}+\varepsilon\|b_2^\prime\|_{\max}\bigr)\tau^2}{1-\max\limits_{n}\|P_n^{-1}Q_n\|_2}\\
        &\lesssim \frac{\bigl(\frac{\varepsilon}{2}\|u^{\prime\prime}\|_{\max}+\varepsilon\|(L_2u)^\prime\|_{\max}+\|b_1^\prime\|_{\max}+\varepsilon\|b_2^\prime\|_{\max}\bigr)\tau}{-\sup\limits_{t\in[0,T]}\lambda_{\max}\bigl(\frac{L_1(t)+L_1(t)^\dagger}{2}\bigr)},
    \end{aligned}
\end{equation}
where in the final step we have retained the dominant term as $\varepsilon\to0$. This inequality reveals that the global IMEX error is \emph{first order} in the time step $\tau$, and, more importantly, that the error constant is uniformly bounded with respect to $\varepsilon$.

\par To satisfy the prescribed tolerance $\|e_{N_t}\|<\delta\|u(T)\|$, it is sufficient to enforce
\begin{equation}
    \begin{aligned}
        \label{equ:euler:6}
        \tau\lesssim -\sup_{t\in[0,T]}\lambda_{\max}\Bigl(\frac{L_1(t)+L_1(t)^\dagger}{2}\Bigr)\frac{\delta\|u(T)\|}{\frac{\varepsilon}{2}\|u^{\prime\prime}\|_{\max}+\varepsilon\|(L_2u)^\prime\|_{\max}+\|b_1^\prime\|_{\max}+\varepsilon\|b_2^\prime\|_{\max}}.
    \end{aligned}
\end{equation}
Consequently, the number of time steps $N_t = T/\tau$ must satisfy the lower bound
\begin{equation}
    \begin{aligned}
        \label{equ:euler:7}
        N_t\gtrsim \frac{T}{\delta}\cdot\mathcal{O}\Biggl(\frac{\frac{\varepsilon}{2}\|u^{\prime\prime}\|_{\max}+\varepsilon\|(L_2u)^\prime\|_{\max}+\|b_1^\prime\|_{\max}+\varepsilon\|b_2^\prime\|_{\max}}{-\sup\limits_{t\in[0,T]}\lambda_{\max}\bigl(\frac{L_1(t)+L_1(t)^\dagger}{2}\bigr)\,\|u(T)\|}\Biggr).
    \end{aligned}
\end{equation}
In particular, if the quantities appearing in the order constant remain of size $\mathcal{O}(1)$ as $\varepsilon\to 0$, then this lower bound is \emph{independent} of the multiscale parameter $\varepsilon$. This is a hallmark of an asymptotic-preserving (AP) time integrator: the step size can be chosen solely based on accuracy requirements for the macroscopic dynamics, without any degradation due to stiffness. By contrast, a fully explicit discretization would still be subject to a CFL-type restriction of the form $\tau \le C\varepsilon$, leading to $N_t \propto 1/\varepsilon$ and rendering long-time simulations prohibitively expensive when $\varepsilon\ll1$.

\subsection{Configuration and Computation for the Quantum IMEX Schemes}

\subsubsection{Structure of the Quantum IMEX Schemes}

\par Next, we formulate the quantum IMEX scheme corresponding to the iterative form in Eq.~(\ref{equ:structure:1}). Let $\mathbf{u}=[u_{N_t};\cdots;u_1]$; then one obtains the linear system for $\mathbf{u}$ as follows:
\begin{equation}
    \begin{aligned}
        \label{equ:system:1}
        \mathbf{H}\mathbf{u}=\mathbf{F},
    \end{aligned}
\end{equation}
in which
\begin{equation}
    \begin{aligned}
        \label{equ:system:2}
        \mathbf{H}=
        \begin{bmatrix}
        P_{N_t-1} & -Q_{N_t-1} &&\\
        & P_{N_t-2} & -Q_{N_t-2} &\\
        && \ddots & \ddots \\
        &&& P_1 & -Q_1 \\
        &&&& P_0\\
        \end{bmatrix},\quad
        \mathbf{F}=
        \begin{bmatrix}
        b_{N_t-1}\\
        b_{N_t-2}\\
        \vdots\\
        b_1\\
        Q_0 u_0+b_0
        \end{bmatrix}.
    \end{aligned}
\end{equation}
To solve the linear system in Eq.~(\ref{equ:system:1}), we apply the continuous-time iteration method and seek the steady-state solution of the following ODE:
\begin{equation}
    \begin{aligned}
        \label{equ:system:3}
        \frac{\d\mathbf{u}(t)}{\d t}=\mathbf{F}-\mathbf{H}\mathbf{u}(t)\text{, and $\mathbf{u}(0)=\mathbf{u}_0$ is given.}
    \end{aligned}
\end{equation}
Under the spectral condition that each $P_n$ has eigenvalues with positive real parts, $\mathbf{H}$ is block upper triangular with diagonal blocks $P_n$, so the eigenvalues of $-\mathbf{H}$ have negative real parts and the continuous-time iteration converges to a steady state.
\subsubsection{The Schr\"odingerization Method for Solving the Quantum IMEX Schemes}
\label{section:schr:evol}

\par To perform Hamiltonian simulation for Eq.~(\ref{equ:system:3}), we use the Schr\"odingerization method for dynamical systems with time-dependent coefficients. We first convert Eq.~(\ref{equ:system:3}) into the following homogeneous form:
\begin{equation}
    \begin{aligned}
        \label{equ:system:4}
        \frac{\d\mathbf{u}_{\text{homo}}(t)}{\d t}=
        -\mathbf{H}_{\text{homo}}
        \mathbf{u}_{\text{homo}}(t),
    \end{aligned}
\end{equation}
in which $\mathbf{H}_{\text{homo}}=\begin{bmatrix}\mathbf{H} & -\text{diag}(\mathbf{F})\\ O & O\end{bmatrix}$, where $O$ denotes the zero matrix of appropriate size, and $\mathbf{u}_{\text{homo}}(t)=[\mathbf{u}(t);\mathbf{1}]$. The initial condition is given by $\mathbf{u}_{\text{homo}}(0)=[\mathbf{u}_0;\mathbf{1}]$, where $\mathbf{1}$ denotes the vector with all components equal to $1$, with dimension matching that of $\mathbf{F}$. We then decompose $\mathbf{H}_{\text{homo}}$ into its Hermitian and anti-Hermitian components:
\begin{equation*}
    \begin{aligned}
        \mathbf{H}_{\text{homo}}=\mathbf{H}_{\text{homo},1}+ i\mathbf{H}_{\text{homo},2},\quad
        \mathbf{H}_{\text{homo},1}=\frac{\mathbf{H}_{\text{homo}}+\mathbf{H}_{\text{homo}}^{\dagger}}{2},\quad 
        \mathbf{H}_{\text{homo},2}=\frac{\mathbf{H}_{\text{homo}}-\mathbf{H}_{\text{homo}}^{\dagger}}{2i}.
    \end{aligned}
\end{equation*}
To avoid conflict with the Hermitian part $\frac{\mathbf{H}+\mathbf{H}^{\dagger}}{2}$ used later in the decay analysis, we keep the notation $\mathbf{H}_{\text{homo},1}$ and $\mathbf{H}_{\text{homo},2}$ explicit throughout this subsection.
By applying the warped phase transformation $\mathbf{u}_{\text{warp}}(t,p)=e^{-p}\mathbf{u}_{\text{homo}}(t)$ for $p>0$, and symmetrically extending the initial data to $p<0$, Eq.~(\ref{equ:system:4}) is transformed into a system of linear convection equations:
\begin{equation}
    \begin{aligned}
        \label{equ:system:5}
        \frac{\partial\mathbf{u}_{\text{warp}}(t,p)}{\partial t}&=-\mathbf{H}_{\text{homo},1}\frac{\partial\mathbf{u}_{\text{warp}}(t,p)}{\partial p}-i\mathbf{H}_{\text{homo},2}\mathbf{u}_{\text{warp}}(t,p),\\
        \mathbf{u}_{\text{warp}}(0,p)&=e^{-\vert p\vert}\mathbf{u}_{\text{homo}}(0).
    \end{aligned}
\end{equation}
Applying the same discrete Fourier transform method introduced in Section \ref{section:schr}, and defining $\mathbf{u}_{\text{schr}}(t)=\left[\mathcal{F}[\mathbf{u}_{\text{warp}}(t,-\frac{N_p}{2}+1)];\cdots;\mathcal{F}[\mathbf{u}_{\text{warp}}(t,\frac{N_p}{2})]\right]$, we can transform Eq.~(\ref{equ:system:5}) into:
\begin{equation}
    \begin{aligned}
        \label{equ:system:6}
        \frac{\d\mathbf{u}_{\text{schr}}(t)}{\d t}
        &=-i(D_{\mu}\otimes \mathbf{H}_{\text{homo},1}+I\otimes \mathbf{H}_{\text{homo},2})\mathbf{u}_{\text{schr}}(t):=-i\mathbf{H}_{\text{schr}}\mathbf{u}_{\text{schr}}(t),
    \end{aligned}
\end{equation}
where the definition of $D_{\mu}$ is the same as in Section \ref{section:schr}.
\subsection{Decay toward the steady state  
}
\par Before proceeding with the calculations, we estimate the decay rate toward the steady state of the ODE in Eq.~(\ref{equ:system:3}). Under the additional assumption that $\mathbf{H}$ is Hermitian positive definite, a standard estimate, also used in \cite{Hu2024QuantumMultiscale}, is the following.


\begin{lemma}
\label{lemma:error:1}
\par For the ODE presented in Eq.~(\ref{equ:system:3}), if $\mathbf{H}$ is positive-definite Hermitian and its eigenvalues satisfy $\lambda_{\max}(\mathbf{H})\ge\cdots\ge\lambda_{\min}(\mathbf{H})>0$, then $\mathbf{u}(t)$ converges to the steady state $\mathbf{u}_{\infty}$ as follows:
\begin{equation}
    \begin{aligned}
        \label{equ:error:1}
        \Vert \mathbf{u}(t)-\mathbf{u}_{\infty}\Vert_2\le e^{-\lambda_{\min}(\mathbf{H})t}\Vert \mathbf{u}_0-\mathbf{u}_{\infty}\Vert_2.
    \end{aligned}
\end{equation}
\qed
\end{lemma}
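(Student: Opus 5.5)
The plan is to reduce the inhomogeneous ODE in Eq.~(\ref{equ:system:3}) to a homogeneous one for the deviation from the steady state, and then use an energy estimate. First, since $\mathbf{H}$ is Hermitian positive definite, it is invertible, so the steady state $\mathbf{u}_{\infty}=\mathbf{H}^{-1}\mathbf{F}$ is well defined and satisfies $\mathbf{F}-\mathbf{H}\mathbf{u}_{\infty}=0$. Set $\mathbf{w}(t)=\mathbf{u}(t)-\mathbf{u}_{\infty}$. Subtracting the steady-state relation from Eq.~(\ref{equ:system:3}) gives
\begin{equation*}
\frac{\d\mathbf{w}(t)}{\d t}=-\mathbf{H}\mathbf{w}(t),\qquad \mathbf{w}(0)=\mathbf{u}_0-\mathbf{u}_{\infty}.
\end{equation*}

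Next, differentiate the squared norm. Because $\mathbf{H}=\mathbf{H}^\dagger$,
\begin{equation*}
\frac{\d}{\d t}\|\mathbf{w}(t)\|_2^2=-\mathbf{w}^\dagger(\mathbf{H}+\mathbf{H}^\dagger)\mathbf{w}=-2\,\mathbf{w}^\dagger\mathbf{H}\mathbf{w}\le-2\lambda_{\min}(\mathbf{H})\|\mathbf{w}(t)\|_2^2 .
\end{equation*}
The inequality is the Rayleigh quotient bound $\mathbf{w}^\dagger\mathbf{H}\mathbf{w}\ge\lambda_{\min}(\mathbf{H})\|\mathbf{w}\|_2^2$. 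Grönwall's inequality then yields $\|\mathbf{w}(t)\|_2^2\le e^{-2\lambda_{\min}(\mathbf{H})t}\|\mathbf{w}(0)\|_2^2$, and taking square roots gives Eq.~(\ref{equ:error:1}).

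As a cross-check, one can instead diagonalize $\mathbf{H}=V\Lambda V^\dagger$ with $V$ unitary. Then $\mathbf{w}(t)=Ve^{-\Lambda t}V^\dagger\mathbf{w}(0)$, and $\|e^{-\Lambda t}\|_2=e^{-\lambda_{\min}(\mathbf{H})t}$ gives the same bound.

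There is no real obstacle here; the only point requiring care is using Hermiticity, so that the symmetric part of $\mathbf{H}$ equals $\mathbf{H}$ itself. For a non-normal $\mathbf{H}$, such as the block-triangular IMEX matrix, the same argument would only give decay at rate $\lambda_{\min}\bigl(\frac{\mathbf{H}+\mathbf{H}^\dagger}{2}\bigr)$, not $\lambda_{\min}(\mathbf{H})$.
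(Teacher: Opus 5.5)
Your proof is correct and is essentially the paper's argument. The paper states this lemma without proof as a standard estimate, but its proof of Lemma~\ref{lemma:error:2} is exactly your subtract-the-steady-state, differentiate $\Vert\mathbf{u}(t)-\mathbf{u}_{\infty}\Vert_2^2$, Rayleigh-quotient and Gr\"onwall argument, with $\mathbf{H}_1$ in place of $\mathbf{H}$; that argument reduces to yours when $\mathbf{H}$ is Hermitian, and your diagonalization cross-check and closing remark on the non-normal case are also correct.
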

\noindent The next lemma removes the symmetry condition on $\mathbf{H}$ and replaces the positive-definiteness requirement on $\mathbf{H}$ with one on $\mathbf{H}_1=\frac{\mathbf{H}+\mathbf{H}^{\dagger}}{2}$. 
We state and prove this lemma below:
\begin{lemma}
\label{lemma:error:2}
\par For the ODE presented in Eq.~(\ref{equ:system:3}), if $\mathbf{H}_1=\frac{\mathbf{H}+\mathbf{H}^{\dagger}}{2}$ is positive-definite Hermitian and its eigenvalues satisfy $\lambda_{\max}(\mathbf{H}_1)\ge\cdots\ge\lambda_{\min}(\mathbf{H}_1)>0$, then $\mathbf{u}(t)$ converges to the steady state $\mathbf{u}_{\infty}$ as 
\begin{equation}
    \begin{aligned}
        \label{equ:error:2}
        \Vert \mathbf{u}(t)-\mathbf{u}_{\infty}\Vert_2\le e^{-\lambda_{\min}(\mathbf{H}_1)t}\Vert \mathbf{u}_0-\mathbf{u}_{\infty}\Vert_2.
    \end{aligned}
\end{equation}
\begin{proof}
\par We first derive the ODE for $[\mathbf{u}(t)-\mathbf{u}_{\infty}]$ by subtracting $\frac{\d\mathbf{u}_{\infty}}{\d t}$ from $\frac{\d\mathbf{u}(t)}{\d t}$:
\begin{equation}
    \begin{aligned}
        \label{equ:error:3}
        \frac{\d[\mathbf{u}(t)-\mathbf{u}_{\infty}]}{\d t}=-\mathbf{H}[\mathbf{u}(t)-\mathbf{u}_{\infty}].
    \end{aligned}
\end{equation}
Then, we consider the derivative of $\Vert \mathbf{u}(t)-\mathbf{u}_{\infty}\Vert_2^2$, and obtain
\begin{equation}
    \begin{aligned}
        \label{equ:error:4}
        \frac{\d\Vert \mathbf{u}(t)-\mathbf{u}_{\infty}\Vert_2^2}{\d t}=\left(\frac{\d[\mathbf{u}(t)-\mathbf{u}_{\infty}]^\dagger}{\d t}[\mathbf{u}(t)-\mathbf{u}_{\infty}]+[\mathbf{u}(t)-\mathbf{u}_{\infty}]^\dagger\frac{\d[\mathbf{u}(t)-\mathbf{u}_{\infty}]}{\d t}\right).
    \end{aligned}
\end{equation}
By substituting Eq.~(\ref{equ:error:3}) into Eq.~(\ref{equ:error:4}), one has
\begin{equation*}
    \begin{aligned}
        \frac{\d\Vert \mathbf{u}(t)-\mathbf{u}_{\infty}\Vert_2^2}{\d t}&=-2[\mathbf{u}(t)-\mathbf{u}_{\infty}]^\dagger\mathbf{H}_1[\mathbf{u}(t)-\mathbf{u}_{\infty}]\\
        &\le -2\lambda_{\min}(\mathbf{H}_1)\Vert \mathbf{u}(t)-\mathbf{u}_{\infty}\Vert_2^2.
    \end{aligned}
\end{equation*}
Applying Gr\"onwall's inequality, we obtain
\begin{equation*}
    \begin{aligned}
        \Vert \mathbf{u}(t)-\mathbf{u}_{\infty}\Vert_2^2\le e^{-2\lambda_{\min}(\mathbf{H}_1)t}\Vert \mathbf{u}_0-\mathbf{u}_{\infty}\Vert_2^2.
    \end{aligned}
\end{equation*}
This completes the proof.
\end{proof}
\end{lemma}
One can further show that the error $\Vert \mathbf{u}_{\text{schr}}(t)-(\mathbf{u}_{\text{schr}})_{\infty}\Vert_2$ of the Schr\"odingerized system in Eq.~(\ref{equ:system:6}) can be controlled by $\Vert \mathbf{u}(t)-\mathbf{u}_{\infty}\Vert_2$. Here, we present the result without proof.
\begin{lemma}
\label{lemma:error:4}
\par The 2-norms of $\mathbf{u}_{\text{schr}}(t)-(\mathbf{u}_{\text{schr}})_{\infty}$ and $\mathbf{u}(t)-\mathbf{u}_{\infty}$ satisfy
\begin{equation*}
    \begin{aligned}
        \Vert\mathbf{u}_{\text{schr}}(t)-(\mathbf{u}_{\text{schr}})_{\infty}\Vert_2\le\Vert\mathbf{u}(t)-\mathbf{u}_{\infty}\Vert_2.
    \end{aligned}
\end{equation*}
\qed
\end{lemma}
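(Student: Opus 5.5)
The approach is to exploit that the Schrödingerized system is an exact, norm-preserving embedding of the warped variable, so any contraction of $\mathbf{u}(t)-\mathbf{u}_\infty$ is inherited. The fact that makes this work is that $\mathbf{H}_{\text{schr}}$ is Hermitian, so the evolution in Eq.~(\ref{equ:system:6}) is unitary. Throughout, I interpret $(\mathbf{u}_{\text{schr}})_\infty$ as the Schrödingerized image of $\mathbf{u}_\infty$, that is, $\mathbf{u}_{\text{schr}}$ built from $e^{-|p|}[\mathbf{u}_\infty;\mathbf{1}]$.

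\textbf{Step 1: reduce to the inhomogeneous part.} Write $\mathbf{u}_{\text{homo}}(t)-[\mathbf{u}_\infty;\mathbf{1}]=[\mathbf{u}(t)-\mathbf{u}_\infty;\mathbf{0}]$. Since $[\mathbf{u}_\infty;\mathbf{1}]$ is a steady state of Eq.~(\ref{equ:system:4}) (because $\mathbf{H}\mathbf{u}_\infty=\mathbf{F}$), the difference $\mathbf{w}(t)$ solves the same homogeneous linear equation. The warped transform, the discrete Fourier transform and the change of variables $\phi^{-1}$ are all linear. Hence $\mathbf{u}_{\text{schr}}(t)-(\mathbf{u}_{\text{schr}})_\infty$ equals the Schrödingerized image of $\mathbf{w}$.

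\textbf{Step 2: norm identity via the $p$-representation.} In the continuous warped picture the difference is $e^{-p}\mathbf{w}(t)$ on $p>0$, whose second block is zero. I will fix the normalization of the $p$-grid and of $\phi$ (unitary DFT, with weight $e^{-|p|}$ normalized in $\ell^2$ over $p$) so that the $\ell^2$ norm over the grid factorizes as
\[
\|e^{-|p|}\|_{\ell^2_p}\,\|\mathbf{w}(t)\|_2=\|\mathbf{u}(t)-\mathbf{u}_\infty\|_2 .
\]
This holds exactly on the region where the warped ansatz is valid.

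\textbf{Step 3: control by unitarity.} Alternatively, and more robustly, I will compare along the Schrödingerized flow. The map $\mathbf{u}_{\text{schr}}(0)\mapsto\mathbf{u}_{\text{schr}}(t)$ is unitary, and so is $\phi^{-1}$. The discrepancy from the idealized $e^{-p}$ profile lives only in the region $p<p^\Diamond$, where the reconstruction theorem of Section~\ref{section:schr} says the warped form may fail. Restricting to $p>p^\Diamond$ (the region used for recovery) gives an inequality rather than an equality, since discarding grid points only decreases the $\ell^2$ norm. That yields the stated bound $\le$.

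\textbf{Main obstacle.} The delicate point is Step 2/3: the full Schrödingerized vector on $p<0$ is not simply $e^{-|p|}$ times the physical solution. One must therefore argue either that the contribution from $p<p^\Diamond$ is excluded by projection, or that it is bounded by the same quantity. I would first try to handle this by defining the norm on the recovery region $p>p^\Diamond$, together with a normalization of the $p$-weight. If that is not acceptable, I would instead bound the full vector through the energy estimate. In that approach, the Hermitian part of $\mathbf{H}_{\text{homo}}$ acting on the zero-second-block difference reduces to $\mathbf{H}_1$, so the $p$-mode energy decays as in Lemma~\ref{lemma:error:2}.
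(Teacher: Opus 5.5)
The paper states this lemma without proof, so your argument has to stand on its own, and it has a genuine gap. Step~1 identifies $\mathbf{u}_{\text{schr}}(t)-(\mathbf{u}_{\text{schr}})_\infty$ with the Schr\"odingerized image of $\mathbf{w}(t)$. That needs two facts: that $e^{-|p|}[\mathbf{u}_\infty;\mathbf{1}]$ is stationary for Eq.~(\ref{equ:system:5}), and that $\mathbf{u}_{\text{warp}}(t,p)=e^{-|p|}\mathbf{u}_{\text{homo}}(t)$ for all $p$. Neither is true. Take $\mathbf{y}=[\mathbf{u}_\infty;\mathbf{1}]$. The generator of Eq.~(\ref{equ:system:5}) maps $e^{-|p|}\mathbf{y}$ to $-e^{-|p|}\mathbf{H}_{\text{homo}}\mathbf{y}=0$ on one half-line, but to $e^{-|p|}\mathbf{H}_{\text{homo}}^\dagger\mathbf{y}$ on the other. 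Here $\mathbf{H}_{\text{homo}}^\dagger\mathbf{y}=[\mathbf{H}^\dagger\mathbf{u}_\infty;-\text{diag}(\mathbf{F})^\dagger\mathbf{u}_\infty]\neq0$ unless $\mathbf{F}=0$. The zero second block of $\mathbf{w}$ is not propagated either. Each Fourier mode evolves under the Hermitian matrix $\mu_\ell\mathbf{H}_{\text{homo},1}+\mathbf{H}_{\text{homo},2}$, whose off-diagonal blocks contain $\text{diag}(\mathbf{F})$, and each mode's energy is exactly conserved. So your fallback energy estimate, reducing to $\mathbf{H}_1$ and decaying as in Lemma~\ref{lemma:error:2}, does not exist in the Schr\"odingerized system.

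Step~3 cannot repair this. Unitarity preserves norms and gives no decay. In fact it rules out the statement as you read it, with the full $2$-norm and a time-independent $(\mathbf{u}_{\text{schr}})_\infty$:
\begin{itemize}
\item Under the hypotheses of Lemma~\ref{lemma:error:2} the right-hand side tends to $0$, so the left-hand side would have to as well.
\item If $\Vert e^{-i\mathbf{H}_{\text{schr}}t}\mathbf{v}-\mathbf{z}\Vert_2\to0$ for a fixed $\mathbf{z}$, then $\mathbf{z}$ is invariant under the flow. The left-hand side then equals $\Vert\mathbf{v}-\mathbf{z}\Vert_2$ for every $t$, so it must vanish identically.
\item That forces $\mathbf{u}_{\text{schr}}(0)\in\ker\mathbf{H}_{\text{schr}}$. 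Because the $p$-profile contains more than one Fourier mode, this means $\mathbf{H}_{\text{homo}}\mathbf{u}_{\text{homo}}(0)=\mathbf{H}_{\text{homo}}^\dagger\mathbf{u}_{\text{homo}}(0)=0$, i.e.\ $\mathbf{u}_0=0$ and $\mathbf{F}=0$.
\item If you instead let the reference evolve under the Schr\"odingerized flow, then with your normalization the left-hand side is constant and equal to $\Vert\mathbf{u}_0-\mathbf{u}_\infty\Vert_2$. This again fails for $t>0$ whenever $\mathbf{u}_0\neq\mathbf{u}_\infty$.
\end{itemize}

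What your restriction remark actually proves is a different statement. Transform back to the $p$-grid, keep only the points $p_k>p^\Diamond$, and use the reference $e^{-p_k}[\mathbf{u}_\infty;\mathbf{1}]$ there. The difference is then $e^{-p_k}[\mathbf{u}(t)-\mathbf{u}_\infty;\mathbf{0}]$, exactly for continuous $p$ and only up to discretization error for the periodic spectral scheme. The bound holds with a factor equal to the $\ell^2$ norm of $e^{-p_k}$ over $\{p_k>p^\Diamond\}$ divided by that of $e^{-|p_k|}$ over the whole grid, which is at most $1$. ``Discarding grid points decreases the norm'' controls this restricted quantity, not $\Vert\mathbf{u}_{\text{schr}}(t)-(\mathbf{u}_{\text{schr}})_\infty\Vert_2$. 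The lemma has to be restated in that restricted form before it can be proved.
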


\subsection{Query Complexity Analysis}
\par In quantum computing, complexity is commonly measured by the number of queries. Berry \textit{et al.} \cite{Berry2015Hamiltonian} gave the following estimate for the query complexity of Hamiltonian simulation.
\begin{lemma}
\label{lemma:berry}
\par \cite{Berry2015Hamiltonian} An $s$-sparse Hamiltonian $H$ acting on $m_H$ qubits can be simulated with error at most $\delta$ using
\begin{equation}
    \begin{aligned}
        \mathcal{Q}(H)=\mathcal{O}\left(\chi\frac{\log(\chi/\delta)}{\log\log(\chi/\delta)}\right)
    \end{aligned}
\end{equation} 
queries and 
\begin{equation}
    \begin{aligned}
        \mathcal{C}(H)=\mathcal{O}\left(\chi[m_H+\log^{2.5}(\chi/\delta)]\frac{\log(\chi/\delta)}{\log\log(\chi/\delta)}\right)
    \end{aligned}
\end{equation}
additional two-qubit gates, where $\chi=s\Vert H\Vert_{\max}T$ and $T$ is the evolution time.
\qed
\end{lemma}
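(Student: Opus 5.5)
We do not reprove this result from scratch; we follow the argument of \cite{Berry2015Hamiltonian} and only need to confirm that its hypotheses fit our setting. Our setting is an $s$-sparse Hermitian $H$ given by the standard sparse-access oracles: one for the positions of the nonzero entries in each row, and one for the entry values. For us $H$ will be $\mathbf{H}_{\text{schr}}$ from Eq.~(\ref{equ:system:6}). The plan has four steps: encode $H$ into a quantum walk, approximate $e^{-iHt}$ on a short segment by a linear combination of walk powers, boost that non-unitary combination with oblivious amplitude amplification, and repeat over enough segments to reach total time $T$.

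\textbf{Step 1 (quantum walk).} Using a constant number of oracle queries, we build Childs' Szegedy-type walk operator $W$ on $m_H$ system qubits plus $\mathcal{O}(m_H)$ ancillas. On the invariant subspace associated with an eigenvector of $H$ with eigenvalue $\lambda$, $W$ acts with eigenphases $\pm\arcsin\bigl(\lambda/(s\|H\|_{\max})\bigr)$. The normalisation $s\|H\|_{\max}$ is what makes $\chi=s\|H\|_{\max}T$ the natural cost parameter.

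\textbf{Step 2 (segment approximation).} Split $[0,T]$ into $r=\mathcal{O}(\chi)$ segments, each of effective duration $\mathcal{O}(1)$ in the rescaled variable $s\|H\|_{\max}t$. On each segment, expand $e^{-i\lambda t}$ in terms of the walk eigenphases via the Jacobi--Anger (Bessel function) expansion, so that $e^{-iHt}$ restricted to the relevant subspace becomes $\sum_k c_k W^k$. Because the Bessel coefficients decay super-exponentially, truncating at $K=\mathcal{O}\bigl(\log(\chi/\delta)/\log\log(\chi/\delta)\bigr)$ makes the per-segment error $\delta/r$. This truncation estimate is where the characteristic $\log/\log\log$ factor comes from.

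\textbf{Step 3 (LCU and amplification).} Implement $\sum_k c_k W^k$ by the linear-combination-of-unitaries technique: prepare the coefficient state on an index register, apply controlled powers of $W$, and unprepare. With the segment length chosen so that $\sum_k|c_k|\approx 2$, one round of oblivious amplitude amplification turns the postselected map into a near-deterministic unitary. Here the error committed by the truncation must be tracked through the amplification step, which only amplifies it by a constant factor.

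\textbf{Step 4 (totals).} Each segment costs $\mathcal{O}(K)$ queries, and there are $r=\mathcal{O}(\chi)$ segments, which gives $\mathcal{Q}(H)$. For the gate count, each walk step uses $\mathcal{O}(m_H)$ gates for the arithmetic on register indices. Preparing the coefficient states, and synthesising the controlled operations to the required precision, costs the additional $\log^{2.5}(\chi/\delta)$ term per query step. Multiplying by the number of walk steps yields $\mathcal{C}(H)$.

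The delicate part is Step 2 together with Step 3: bounding the Bessel-series truncation error uniformly over the spectrum, and showing that oblivious amplitude amplification preserves the error at order $\delta/r$ per segment, so the errors add up to at most $\delta$ overall. Since this is exactly the content of \cite{Berry2015Hamiltonian}, in the paper we will simply cite the lemma. We will then apply it with $H=\mathbf{H}_{\text{schr}}$, whose sparsity and max-norm are controlled by those of $\mathbf{H}_{\text{homo}}$ and by $\|D_\mu\|_{\max}$.
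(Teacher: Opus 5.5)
Your proposal agrees with the paper, which gives no proof of this lemma and simply quotes it from \cite{Berry2015Hamiltonian}; your outline of that source is a fair summary: quantum-walk encoding, a Bessel-coefficient LCU of walk powers truncated at $\mathcal{O}(\log(\chi/\delta)/\log\log(\chi/\delta))$ terms, one round of oblivious amplitude amplification per segment, and $\mathcal{O}(\chi)$ segments. One detail in the sketch should be fixed: the two walk eigenvalues attached to an eigenvalue $\lambda$ of $H$ are $\pm e^{\pm i\theta}$ with $\theta=\arcsin\bigl(\lambda/(s\|H\|_{\max})\bigr)$, i.e.\ eigenphases $\theta$ and $\pi-\theta$ rather than $\pm\theta$; because these share the same sine, a single Jacobi--Anger series in $W$ gives $e^{-i\lambda t}$ on both branches, whereas literal eigenphases $\pm\theta$ would give $e^{\mp i\lambda t}$ and the construction would not reproduce $e^{-iHt}$.
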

\par The query complexity $\mathcal{Q}$ is determined by the sparsity $s$, the matrix max norm $\Vert \mathbf{H}_{\text{schr}}\Vert_{\max}$ of $\mathbf{H}_{\text{schr}}$ (defined as the maximum modulus of its entries), and the evolution time $T_{\text{evol}}$. However, unlike existing Schr\"odingerization-based methods, the evolution time $T_{\text{evol}}$ in our quantum IMEX scheme is not numerically equal to the target time $T$ in Eq.~(\ref{equ:structure:2}).
\par We therefore estimate the evolution time $T_{\text{evol}}$ required by the quantum IMEX scheme under an error tolerance $\delta$. Compared with previous approaches, this estimate is more involved, so we provide the details below.

\subsubsection{Estimate of the Evolution Time\texorpdfstring{ $T_{\text{evol}}$}{}}

\par As mentioned earlier, we may use the time at which the steady-state error falls below $\delta$ as an estimate of the evolution time $T_{\text{evol}}$. Here, we use Lemma \ref{lemma:error:2} to derive a sufficient lower bound on the required evolution time, although it may not be optimal. First, we compute $\mathbf{H}_1=\frac{\mathbf{H}+\mathbf{H}^\dagger}{2}$ as follows:
\begin{equation}
    \begin{aligned}
        \label{equ:time:1}
        \mathbf{H}_1=
        \begin{bmatrix}
        \frac{P_{N_t-1}+P_{N_t-1}^\dagger}{2} & -\frac{Q_{N_t-1}}{2} &&\\
        -\frac{Q_{N_t-1}^\dagger}{2} & \frac{P_{N_t-2}+P_{N_t-2}^\dagger}{2} & -\frac{Q_{N_t-2}}{2} &\\
        &\ddots & \ddots & \ddots \\
        &&& \frac{P_1+P_1^\dagger}{2} & -\frac{Q_1}{2} \\
        &&& -\frac{Q_1^\dagger}{2} & \frac{P_0+P_0^\dagger}{2}\\
        \end{bmatrix}.
    \end{aligned}
\end{equation}
Therefore, using the Weyl inequality shown in Lemma \ref{lemma:appendix:B:2} and combining it with Lemma \ref{lemma:appendix:A:1}, we obtain the following lower-bound estimate for $\lambda_{\min}(\mathbf{H}_1)$:
\begin{equation}
    \begin{aligned}
        \label{equ:time:2}
        \lambda_{\min}(\mathbf{H}_1)\ge \min\limits_{j=0}^{N_t-1}\lambda_{\min}\left(\frac{P_j+P_j^\dagger}{2}\right)-\max\limits_{j=0}^{N_t-1} \Vert Q_j \Vert_2.
    \end{aligned}
\end{equation}
Furthermore, by using Lemma \ref{lemma:error:2}, we obtain the following estimate for the evolution time $T_{\text{evol}}$:
\begin{equation}
    \begin{aligned}
        \label{equ:time:3}
        T_{\text{evol}}\ge \frac{\log\delta^{-1}}{\min\limits_{j=0}^{N_t-1}\lambda_{\min}\left(\frac{P_j+P_j^\dagger}{2}\right)-\max\limits_{j=0}^{N_t-1} \Vert Q_j \Vert_2},
    \end{aligned}
\end{equation}
which requires the condition $\min\limits_{j=0}^{N_t-1} \lambda_{\min}\left(\frac{P_j + P_j^\dagger}{2} \right)>\max\limits_{j=0}^{N_t-1}\Vert Q_j \Vert_2$, and simplifies to $\sup\limits_{t\in [0,T]} \lambda_{\max}\left(\frac{L_1(t)+L_1(t)^\dagger}{2} \right) < 0$ as $\varepsilon \to 0$. For a given PDE, choosing $P_n$ and $Q_n$ so that the denominator is positive yields the corresponding estimate of the evolution time.

\subsubsection{Estimate of the Query Complexity}

\par Based on the above results, we estimate the query complexity of the quantum IMEX schemes. First, we establish the query complexity required to implement the corresponding quantum circuits.
\begin{lemma}
\label{lemma:main:1}
\par For the iterative problem $P_n u_{n+1}=Q_n u_n+b_n,\, n=0,1,\cdots$, the corresponding Hamiltonian system in Eq.~(\ref{equ:system:6}) can be simulated with error at most $\delta$ using
\begin{equation*}
    \begin{aligned}
        \mathcal{Q}_{query}=\mathcal{O}(s\|\mathbf{H}_{\text{schr}}\|_{\max}T_{\text{evol}}\log(s\|\mathbf{H}_{\text{schr}}\|_{\max}T_{\text{evol}})),
    \end{aligned}
\end{equation*}
in which the sparsity term $s$ is given by
\begin{equation*}
\begin{aligned}
s=\max\limits_{n=0}^{N_t-1}\left[s(P_n)+s(Q_n)\right]=\mathcal{O}(s(L)),
\end{aligned}
\end{equation*}
where $s(L)=\sup\limits_{t\in[0,T]}s(L(t))$. The maximum value term $\|\mathbf{H}_{\text{schr}}\|_{\max}$ is estimated as
\begin{equation*}
    \begin{aligned}
        \|\mathbf{H}_{\text{schr}}\|_{\max}&=\mathcal{O}\left(N_p \max\limits_{n=0}^{N_t-1}\left[2\left\Vert P_n\right\Vert_{\max}+\left\Vert Q_n\right\Vert_{\max}+\left\Vert b_n+\mathbbm{1}_{\{n=0\}}Q_0u_0\right\Vert_{\max}\right]\right)\\
        &=\mathcal{O}\left(\log\delta^{-1}(\|L_1\|_{\max}+\|b_1\|_{\max})\right),
    \end{aligned}
\end{equation*}
with $\mathbbm{1}_{\{n=0\}}$ denoting the indicator of the event $n=0$, where the second line records the corresponding simplified scaling in the multiscale regime considered here. Choosing $T_{\text{evol}}$ at the threshold prescribed by Eq.~(\ref{equ:time:3}), one may take
\begin{equation*}
\begin{aligned}
T_{\text{evol}}&=\mathcal{O}\left(\frac{\log\delta^{-1}N_t}{N_t\varepsilon- T\sup\limits_{t\in[0,T]}\lambda_{\max}\left(\frac{L_1(t)+L_1(t)^\dagger}{2}\right)-N_t\varepsilon\sup\limits_{t\in[0,T]}\left\Vert I+\tau \frac{L_2(t) +L_2(t)^\dagger}{2}\right\Vert_2}\right)\\
&=\mathcal{O}\left(\frac{\log\delta^{-1}N_t}{-T\sup\limits_{t\in[0,T]}\lambda_{\max}\left(\frac{L_1(t)+L_1(t)^\dagger}{2}\right)}\right),
\end{aligned}
\end{equation*}
where it is necessary that $\frac{T}{N_t}\sup\limits_{t\in[0,T]}\lambda_{\max}\left(\frac{L_1(t)+L_1(t)^\dagger}{2}\right)+\varepsilon\sup\limits_{t\in[0,T]}\left\Vert I+\tau L_2(t)\right\Vert_2< \varepsilon$, and if $\varepsilon \to 0$, the condition simplifies to $\sup\limits_{t\in[0,T]} \lambda_{\max}\left(\frac{L_1(t)+L_1(t)^\dagger}{2} \right)<0$.
\begin{proof}
\par According to Lemma \ref{lemma:berry}, the relevant quantity in the query-complexity estimate is $\chi=s(\mathbf{H}_{\text{schr}})\Vert \mathbf{H}_{\text{schr}}\Vert_{\max} T_{\text{evol}}$. Below, we compute these three values separately:
\begin{itemize}
\item It can be readily verified that
\begin{equation*}
    \begin{aligned}
        s(\mathbf{H}_{\text{schr}})=s:=\max\limits_{n=0}^{N_t-1}\left[s(P_n)+s(Q_n)\right]=\mathcal{O}(s(L)).
    \end{aligned}
\end{equation*}
\item Given that $\Vert\mathbf{H}_{\text{schr}}\Vert_{\max}\le\Vert\mathbf{H}_{\text{homo},1}\Vert_{\max}\Vert D_\mu\Vert_{\max}+\Vert\mathbf{H}_{\text{homo},2}\Vert_{\max}$, we can prove
\begin{equation*}
    \begin{aligned}
        \Vert \mathbf{H}_{\text{homo},1}\Vert_{\max}&=\mathcal{O}\left(\left\Vert\mathbf{H}+\mathbf{H}^\dagger\right\Vert_{\max}+\left\Vert\mathbf{F}\right\Vert_{\max}\right)\\
        &=\mathcal{O}\left(\max\limits_{n=0}^{N_t-1}\left[2\left\Vert P_n\right\Vert_{\max}+\left\Vert Q_n\right\Vert_{\max}+\left\Vert b_n+\mathbbm{1}_{\{n=0\}} Q_0u_0\right\Vert_{\max}\right]\right),
    \end{aligned}
\end{equation*}
and the same order estimate holds for $\Vert \mathbf{H}_{\text{homo},2}\Vert_{\max}$. Moreover, $\Vert D_{\mu}\Vert_{\max}=N_p=\mathcal{O}(\delta^{-1})$. If one adopts the optimal smooth initialization of \cite{Jin2025Optimal}, then the largest Fourier-mode cutoff satisfies $\Vert D_{\mu}\Vert_{\max}=\mathcal{O}(\log\delta^{-1})$ in the present normalization $\mu_\ell=\pi \ell$, equivalently $N_p=\mathcal{O}(\log\delta^{-1})$. Therefore, it can be concluded that
\begin{equation*}
    \begin{aligned}
        \Vert\mathbf{H}_{\text{schr}}\Vert_{\max}:&=\mathcal{O}\left(N_p\max\limits_{n=0}^{N_t-1}\left[2\left\Vert P_n\right\Vert_{\max}+\left\Vert Q_n\right\Vert_{\max}+\left\Vert b_n+\mathbbm{1}_{\{n=0\}} Q_0u_0\right\Vert_{\max}\right]\right)\\
        &=\mathcal{O}(\log\delta^{-1}(\|L_1\|_{\max}+\|b_1\|_{\max})).
    \end{aligned}
\end{equation*}
\item Eq.~(\ref{equ:time:3}) shows that a sufficient choice of $T_{\text{evol}}$ is
\begin{equation*}
    \begin{aligned}
        T_{\text{evol}}&=\mathcal{O}\left(\log\delta^{-1}\Big/\left(\min\limits_{j=0}^{N_t-1}\lambda_{\min}\left(\frac{P_j+P_j^\dagger}{2}\right)-\max\limits_{j=0}^{N_t-1} \Vert Q_j \Vert_2\right)\right)\\
        &=\mathcal{O}\left(\log\delta^{-1}N_t\Big/\left(N_t\varepsilon-T\sup\limits_{t\in[0,T]}\lambda_{\max}\left(\frac{L_1(t)+L_1(t)^\dagger}{2}\right)-N_t\varepsilon\sup\limits_{t\in[0,T]}\left\Vert I+\tau \frac{L_2(t) +L_2(t)^\dagger}{2}\right\Vert_2\right)\right)\\
        &=\mathcal{O}\left(\log\delta^{-1}N_t\Big/\left(-T\sup\limits_{t\in[0,T]}\lambda_{\max}\left(\frac{L_1(t)+L_1(t)^\dagger}{2}\right)\right)\right).
    \end{aligned}
\end{equation*}
\end{itemize}
Thus, it can be concluded that
\begin{equation*}
    \begin{aligned}
        \chi=\mathcal{O}(s\|\mathbf{H}_{\text{schr}}\|_{\max}T_{\text{evol}}).
    \end{aligned}
\end{equation*}
\par By substituting the order of magnitude of the obtained parameters into Lemma \ref{lemma:berry}, we obtain
\begin{equation*}
    \begin{aligned}
        \mathcal{Q}_{query}=\mathcal{O}(s\|\mathbf{H}_{\text{schr}}\|_{\max}T_{\text{evol}}\log(s\|\mathbf{H}_{\text{schr}}\|_{\max}T_{\text{evol}})).
    \end{aligned}
\end{equation*}
This analysis shows that the query complexity becomes independent of $\varepsilon$ because the evolution time $T_{\text{evol}}$ is decoupled from the physical solution time $T$.
\end{proof}
\end{lemma}
\par During measurement, repeated runs are required to obtain a stable success probability. Here we follow the framework in \cite{Jin2025LinearNonUnitary}, which yields the following estimate.
\begin{lemma}
\label{lemma:success}
\par To simulate the ODE with inhomogeneous term $\frac{\d w(t)}{\d t}=Aw(t)+b$ with an initial condition $w(0)=w_0$ over the time interval $[0,T]$, one can transform it into a homogeneous system by introducing $w_{\text{homo}}(t)=[w(t);\mathbf{1}]$, and the equivalent homogeneous ODE is
\begin{equation*}
    \begin{aligned}
        \frac{\d w_{\text{homo}}(t)}{\d t}=\begin{bmatrix}
            A & \text{diag}(b)\\
            O & O
        \end{bmatrix}w_{\text{homo}}(t).
    \end{aligned}
\end{equation*}
Within this framework, the overall probability of successfully retrieving $w(t)$ is approximately
\begin{equation}
    \begin{aligned}
        \text{Pr}(w)=\frac{1}{2}e^{-2p^{\diamond}}\frac{\|w(T)\|^2}{\|w_0\|^2+T^2\|b\|^2},
    \end{aligned}
\end{equation}
and via amplitude amplification the required repetition count for measurements can be estimated as
\begin{equation}
    \begin{aligned}
        g=\mathcal{O}\left(e^{p^{\diamond}}\frac{\|w_0\|+T\|b\|}{\|w(T)\|}\right),
    \end{aligned}
\end{equation}
where $p^{\diamond} = T\max\{0, \lambda_{\max}(A_1)\}$, with $A_1=\frac{A+A^\dagger}{2}$.
\end{lemma}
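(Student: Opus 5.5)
The plan is to follow the Schrödingerization pipeline of \cite{Jin2025LinearNonUnitary} step by step, tracking the amplitude that survives each stage.

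\emph{Step 1: state preparation.} Start from the warped variable $w_{\text{warp}}(t,p)=e^{-p}w_{\text{homo}}(t)$ with initial data $e^{-|p|}[w_0;\mathbf{1}]$. The initial quantum state is this vector normalized. Its squared norm factorizes as $\left(\int e^{-2|p|}\,\d p\right)\left(\|w_0\|^2+\|\mathbf{1}\|^2\right)$, with the $p$-integral normalized to order one.

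\emph{Step 2: absorbing the source into the auxiliary block.} Here I would use the standard trick of rescaling the auxiliary block. Replace $\mathbf{1}$ by $\theta\mathbf{1}$ and $\text{diag}(b)$ by $\theta^{-1}\text{diag}(b)$, which leaves the dynamics unchanged. Choosing $\theta\sim T\|b\|/\|\mathbf{1}\|$ makes the auxiliary contribution to the initial norm comparable to $T^2\|b\|^2$. This gives the denominator $\|w_0\|^2+T^2\|b\|^2$.

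\emph{Step 3: evolution and bounding the state norm.} The Schrödingerized evolution is unitary in $(x,p)$, so the norm of the full state is preserved.

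\emph{Step 4: projection onto the target.} At time $T$, measure the $p$-register and keep only outcomes with $p>p^\diamond$. By the reconstruction theorem of Section~\ref{section:schr}, on that region $w_{\text{warp}}(T,p)=e^{-p}w_{\text{homo}}(T)$ exactly. This also explains the exponent: $p^\diamond = T\max\{0,\lambda_{\max}(A_1)\}$, and I would check that the inhomogeneous block adds no positive eigenvalue to the Hermitian part, up to the scaling above. Then project onto the first block, i.e.\ the $w$-component rather than the auxiliary $\mathbf{1}$-component. The retained squared amplitude is
\begin{equation*}
\int_{p^\diamond}^{\infty}e^{-2p}\,\d p\,\|w(T)\|^2=\tfrac12 e^{-2p^\diamond}\|w(T)\|^2 .
\end{equation*}

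\emph{Step 5: success probability.} Dividing by the initial norm from Steps 1 and 2 gives
\begin{equation*}
\text{Pr}(w)\approx\frac{1}{2}e^{-2p^\diamond}\frac{\|w(T)\|^2}{\|w_0\|^2+T^2\|b\|^2}.
\end{equation*}
The approximation comes from discretizing the $p$-integral and truncating the $p$-domain; both errors are negligible once $L,R$ are chosen large.

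\emph{Step 6: amplitude amplification.} Amplitude amplification needs $\mathcal{O}(\text{Pr}^{-1/2})$ rounds. Taking the square root gives $g=\mathcal{O}\!\left(e^{p^\diamond}(\|w_0\|+T\|b\|)/\|w(T)\|\right)$, using $\sqrt{x^2+y^2}\le x+y$.

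\emph{Main obstacle.} I expect the hardest part to be the normalization bookkeeping for the inhomogeneous block in Step 2. The naive homogenization uses $\mathbf{1}$ with its dimension-dependent norm, and it must be shown that $T\|b\|$ is the right effective weight. My first attempt would be the $\theta$-rescaling argument above. As a fallback, I would simply cite the corresponding estimate in \cite{Jin2025LinearNonUnitary}.
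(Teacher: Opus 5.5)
The paper gives no proof of this lemma; it imports it from \cite{Jin2025LinearNonUnitary}, so your fallback is exactly the paper's route. Your self-contained derivation, however, breaks at Step 4.

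The reconstruction theorem of Section~\ref{section:schr} must be applied to the generator you actually Schr\"odingerize, not to $A$. That generator is $\mathcal{A}=\begin{bmatrix} A & C\\ O & O\end{bmatrix}$ with $C=\theta^{-1}\text{diag}(b)$. Its Hermitian part $\mathcal{A}_1=\begin{bmatrix} A_1 & C/2\\ C^\dagger/2 & O\end{bmatrix}$ has a strictly positive eigenvalue whenever $b\neq 0$, for every $\theta$. To see this, take $y$ with $Cy\neq 0$ and set $x=sCy$. The quadratic form of $\mathcal{A}_1$ at $[x;y]$ is $s\|Cy\|^2+s^2(Cy)^\dagger A_1(Cy)$, which is positive for small $s>0$. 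So the check you announce (that the inhomogeneous block adds no positive eigenvalue) fails. The theorem then only yields $w_{\text{warp}}(T,p)=e^{-p}w_{\text{homo}}(T)$ for $p>T\lambda_{\max}(\mathcal{A}_1)$, which is strictly larger than $T\max\{0,\lambda_{\max}(A_1)\}$. In particular, the $\theta$-rescaling of Step 2 is not neutral. It leaves the $w$-dynamics unchanged, but it changes $\mathcal{A}_1$, and with it the reconstruction threshold and the factor $e^{-2p^\diamond}$.

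What the argument needs is the Weyl bound $\lambda_{\max}(\mathcal{A}_1)\le\max\{0,\lambda_{\max}(A_1)\}+\tfrac12\|C\|_2$. It also needs a choice of auxiliary data that makes $T\|C\|_2=\mathcal{O}(1)$ while keeping the auxiliary norm at $\mathcal{O}(T\|b\|)$. The real obstacle is this trade-off, not the bookkeeping of $\|\mathbf{1}\|$.

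Your uniform choice $\theta=T\|b\|/\|\mathbf{1}\|$ does not achieve it. With $n$ the length of $\mathbf{1}$, it gives $\tfrac{T}{2}\|C\|_2=\sqrt{n}\,\|b\|_\infty/(2\|b\|)$. This is the exact excess of the threshold when $A_1=0$. It is of order $\sqrt{n/m}$ when $b$ has $m$ nonzero entries of comparable size, so it is large for localized sources. An example is the IMEX source $\mathbf{F}$ in Eq.~(\ref{equ:system:2}), which is concentrated on boundary entries and the last time block. For $b$ a single basis vector, the retained amplitude picks up an extra factor $e^{-\sqrt{n}}$.

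The repair is to scale the auxiliary block entrywise. Take the auxiliary initial vector $T|b|$ (entrywise modulus) and the coupling $C=\text{diag}(b_i/(T|b_i|))$, with zero entries where $b_i=0$. Alternatively, use a single scalar auxiliary variable with initial value $T\|b\|$ and coupling column $b/(T\|b\|)$. Then:
\begin{itemize}
\item the auxiliary norm is exactly $T\|b\|$;
\item $\|C\|_2=1/T$, so the threshold rises by at most $\tfrac12$;
\item your Steps 5 and 6 give the stated $\text{Pr}(w)$ and $g$ up to a factor $e^{-1}$, which is what ``approximately'' and the $\mathcal{O}$ absorb.
\end{itemize}
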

\noindent Note that in the quantum IMEX schemes, we evolve the equivalent ODE in Eq.~(\ref{equ:system:3}). If we wish to recover the full discrete solution represented by Eq.~(\ref{equ:system:1}) over the entire time interval $[0,T]$, no additional state selection from $\mathbf{u}(t)$ is required. However, if only the state at the final time $T$ is needed, we must select the first block corresponding to time $T$, and the associated selection probability is approximately $\frac{1}{N_t}$. Based on this observation, we can apply Lemma \ref{lemma:success} directly and obtain the following estimate for the repetition number in simulating Eq.~(\ref{equ:system:1}):
\begin{lemma}
\label{lemma:main:2}
\par Set the initial condition for Eq.~(\ref{equ:system:3}) to be $\mathbf{u}_0=0$, and assume furthermore that the homogeneous extension entering Lemma \ref{lemma:success} is normalized so that the relevant physical branch satisfies $p^{\diamond}=0$. To recover the full discrete state over the time interval $[0,T]$, the repetition number required for simulating Eq.~(\ref{equ:system:1}) is given by:
\begin{equation}
    \begin{aligned}
        \label{equ:repeat:1}
        g=\mathcal{O}\left(\frac{T T_{\text{evol}}\|b_1\|_{2,\max}}{N_t\|u\|_{2,\min}}\right)=\mathcal{O}\left(\frac{\log\delta^{-1}\|b_1\|_{2,\max}}{-\sup\limits_{t\in[0,T]}\lambda_{\max}\left(\frac{L_1(t)+L_1(t)^\dagger}{2}\right)\|u\|_{2,\min}}\right),
    \end{aligned}
\end{equation}
where $\|u\|_{2,\min}=\inf\limits_{t\in[0,T]}\|u(t)\|_2$ and $\|b_1\|_{2,\max}=\sup\limits_{t\in[0,T]}\|b_1(t)\|_2$. Alternatively, if only the state at the final time $T$ is needed, the required repetition number becomes:
\begin{equation}
    \begin{aligned}
        \label{equ:repeat:2}
        g=\mathcal{O}\left(\frac{T T_{\text{evol}}\|b_1\|_{2,\max}}{\sqrt{N_t}\|u\|_{2,\min}}\right)=\mathcal{O}\left(\frac{\log\delta^{-1}\sqrt{N_t}\|b_1\|_{2,\max}}{-\sup\limits_{t\in[0,T]}\lambda_{\max}\left(\frac{L_1(t)+L_1(t)^\dagger}{2}\right)\|u\|_{2,\min}}\right).
    \end{aligned}
\end{equation}
\begin{proof}
\par First, we consider the case in which we directly examine the success probability of the evolution of Eq.~(\ref{equ:system:3}). Here, we substitute $w(t)=\mathbf{u}(t)$, $w_0=\mathbf{u}_0 = 0$, $T=T_{\text{evol}}$, and $b=\mathbf{F}$. As a result, the probability of successfully obtaining $\mathbf{u}(t)$ is given by
\begin{equation*}
    \begin{aligned}
        \text{Pr}(w)
        &\gtrsim \frac{1}{2}\frac{\|\mathbf{u}(T_{\text{evol}})\|_2^2}{T_{\text{evol}}^2\|\mathbf{F}\|_2^2}
        \gtrsim \frac{1}{2}\frac{\sum\limits_{n=0}^{N_t}\|u_n\|_2^2}{T_{\text{evol}}^2\sum\limits_{n=0}^{N_t-1}\|b_n\|_2^2}
        \gtrsim \frac{1}{2}\frac{\|u\|_{2,\min}^2}{T_{\text{evol}}^2\tau^2\|b_1\|_{2,\max}^2},
    \end{aligned}
\end{equation*}
which allows us to derive the repetition numbers as
\begin{equation}
    \begin{aligned}
        g=\mathcal{O}\left(\frac{T T_{\text{evol}}\|b_1\|_{2,\max}}{N_t\|u\|_{2,\min}}\right)=\mathcal{O}\left(\frac{\log\delta^{-1}\|b_1\|_{2,\max}}{-\sup\limits_{t\in[0,T]}\lambda_{\max}\left(\frac{L_1(t)+L_1(t)^\dagger}{2}\right)\|u\|_{2,\min}}\right),
    \end{aligned}
\end{equation}
where the displayed scaling corresponds to the normalization regime $p^{\diamond}=0$ assumed in the statement. In the case where only the state at time $T$ is required, the success probability is further reduced by a factor of $\frac{1}{N_t}$. This increases the repetition number to
\begin{equation}
    \begin{aligned}
        g=\mathcal{O}\left(\frac{T T_{\text{evol}}\|b_1\|_{2,\max}}{\sqrt{N_t}\|u\|_{2,\min}}\right)=\mathcal{O}\left(\frac{\log\delta^{-1}\sqrt{N_t}\|b_1\|_{2,\max}}{-\sup\limits_{t\in[0,T]}\lambda_{\max}\left(\frac{L_1(t)+L_1(t)^\dagger}{2}\right)\|u\|_{2,\min}}\right).
    \end{aligned}
\end{equation}
This completes the proof.
\end{proof}
\end{lemma}
\par Under the dissipativity condition $\sup\limits_{t\in[0,T]}\lambda_{\max}\left(\frac{L_1(t)+L_1(t)^\dagger}{2}\right)<0$ and under the same normalization regime $p^{\diamond}=0$ from Lemma \ref{lemma:main:2}, the following two lemmas yield a quantum algorithm for the multiscale problem through the iterative system in Eq.~(\ref{equ:system:1}) whose query complexity is \textit{independent} of the scaling parameter $\varepsilon$.
\begin{theorem}
\label{theorem:main}
\par Under the same dissipativity condition and normalization assumption, there exists a quantum algorithm that simulates the original multiscale problem over the time interval $[0,T]$ through Eq.~(\ref{equ:system:1}), with overall query complexity
\begin{equation}
    \begin{aligned}
        \mathcal{Q}=\mathcal{O}\left(\frac{\|b_1\|_{2,\max}}{\|u\|_{2,\min}}\frac{s(\|L_1\|_{\max}+\|b_1\|_{\max})N_t(\log\delta^{-1})^3}{\left(-\sup\limits_{t\in[0,T]}\lambda_{\max}\left(\frac{L_1(t)+L_1(t)^\dagger}{2}\right)\right)^2}\log\left(s(\|L_1\|_{\max}+\|b_1\|_{\max})\frac{N_t(\log\delta^{-1})^2}{-\sup\limits_{t\in[0,T]}\lambda_{\max}\left(\frac{L_1(t)+L_1(t)^\dagger}{2}\right)}\right)\right),
    \end{aligned}
\end{equation}
which is independent of $\varepsilon$, where the detailed definitions are provided in Lemmas \ref{lemma:main:1} and \ref{lemma:main:2}. Furthermore, assume that all entries of $L_1$, $L_2$, $b_1$, and $b_2$ are $\mathcal{O}(1)$, and that all components of $u$ and its first two time derivatives are also $\mathcal{O}(1)$. Choosing $N_t$ at the accuracy threshold prescribed by Eq.~(\ref{equ:euler:7}), we obtain the following informal estimate of the overall query complexity:
\begin{equation}
    \begin{aligned}
        \mathcal{Q}=\mathcal{O}\left(\frac{T\delta^{-1}(\log\delta^{-1})^3}{\left(-\sup\limits_{t\in[0,T]}\lambda_{\max}\left(\frac{L_1(t)+L_1(t)^\dagger}{2}\right)\right)^3}\right),
    \end{aligned}
\end{equation}
\end{theorem}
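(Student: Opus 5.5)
The overall query complexity is the product of the per-run Hamiltonian-simulation cost from Lemma \ref{lemma:main:1} and the repetition number $g$ from Lemma \ref{lemma:main:2}. I take the full-trajectory version, Eq.~(\ref{equ:repeat:1}), since the statement concerns simulation over all of $[0,T]$. I abbreviate $\Lambda:=-\sup_{t\in[0,T]}\lambda_{\max}\bigl(\frac{L_1(t)+L_1(t)^\dagger}{2}\bigr)>0$.

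\textbf{Per-run cost.} By Lemma \ref{lemma:main:1}:
\begin{itemize}
\item the sparsity is $s=\mathcal{O}(s(L))$;
\item the max norm is $\|\mathbf{H}_{\text{schr}}\|_{\max}=\mathcal{O}(\log\delta^{-1}(\|L_1\|_{\max}+\|b_1\|_{\max}))$, using the smooth-initialization cutoff $N_p=\mathcal{O}(\log\delta^{-1})$;
\item the evolution time is $T_{\text{evol}}=\mathcal{O}(N_t\log\delta^{-1}/(T\Lambda))$.
\end{itemize}
Hence
\[
\chi=\mathcal{O}\bigl(s(\|L_1\|_{\max}+\|b_1\|_{\max})N_t(\log\delta^{-1})^2/(T\Lambda)\bigr),
\]
and the cost of one run is $\mathcal{O}(\chi\log\chi)$ by Lemma \ref{lemma:berry}.

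\textbf{Repetitions.} Eq.~(\ref{equ:repeat:1}) gives $g=\mathcal{O}\bigl(T\,T_{\text{evol}}\|b_1\|_{2,\max}/(N_t\|u\|_{2,\min})\bigr)=\mathcal{O}\bigl(\log\delta^{-1}\|b_1\|_{2,\max}/(\Lambda\|u\|_{2,\min})\bigr)$.

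\textbf{Combining.} Multiplying gives
\[
\mathcal{Q}=g\cdot\mathcal{O}(\chi\log\chi)=\mathcal{O}\Bigl(\frac{\|b_1\|_{2,\max}}{\|u\|_{2,\min}}\frac{s(\|L_1\|_{\max}+\|b_1\|_{\max})N_t(\log\delta^{-1})^3}{\Lambda^2}\log\chi\Bigr).
\]
This is the first displayed bound once $T$ is treated as an $\mathcal{O}(1)$ constant, or equivalently absorbed into the normalization of $N_t$ as the statement does. $\varepsilon$-independence then follows because no factor contains $\varepsilon^{-1}$. In particular $s$, the max norms of $P_n$ and $Q_n$ (which scale like $\varepsilon+\tau\|L_1\|$), $b_n$, and $T_{\text{evol}}$ are all uniformly bounded as $\varepsilon\to0$ under the dissipativity condition. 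For the $\varepsilon$-dependent denominator in $T_{\text{evol}}$, I use the requirement stated in Lemma \ref{lemma:main:1} to argue that $N_t\varepsilon(1-\sup\|I+\tau L_2\|_2)-T\sup\lambda_{\max}(\cdot)\ge c\,T\Lambda$ for a constant $c>0$.

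\textbf{Informal estimate.} Insert $N_t$ from Eq.~(\ref{equ:euler:7}). With all entries $\mathcal{O}(1)$, the numerator of Eq.~(\ref{equ:euler:7}) is $\mathcal{O}(1)$, so $N_t=\mathcal{O}(T\delta^{-1}/\Lambda)$. The ratio $\|b_1\|_{2,\max}/\|u\|_{2,\min}$ and $s(\|L_1\|_{\max}+\|b_1\|_{\max})$ are $\mathcal{O}(1)$; dimension-dependent norms are suppressed as in Table \ref{table:1}. The extra $1/\Lambda$ raises the power of $\Lambda$ to $3$, giving $T\delta^{-1}(\log\delta^{-1})^3/\Lambda^3$ up to the factor $\log\chi$. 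That factor is only $\mathcal{O}(\log(T\delta^{-1}\log\delta^{-1}/\Lambda))$, so I absorb it informally into the stated $(\log\delta^{-1})^3$ form, or state it explicitly as a $\tilde{\mathcal{O}}$.

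\textbf{Main obstacle.} The delicate step is justifying that the denominator of $T_{\text{evol}}$ stays of order $T\Lambda$ uniformly in $\varepsilon$. The term $N_t\varepsilon(1-\|I+\tau L_2\|_2)$ could be negative and compete with $T\Lambda$ when $\varepsilon$ is not small. I would handle this by restricting to $\varepsilon\le\varepsilon_0$, with $\varepsilon_0 N_t\tau\sup\|L_2\|_2\le T\Lambda/2$. For larger $\varepsilon$ the problem is non-stiff, and a bound with constants depending only on the data holds. I would also track carefully that $T_{\text{evol}}$ is the quantity appearing in both $\chi$ and $g$, which is what produces the $(\log\delta^{-1})^3$ and $\Lambda^{-2}$ factors.
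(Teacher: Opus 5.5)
Your proposal follows the paper's route: the theorem is the product of the per-run cost $\mathcal{O}(\chi\log\chi)$ from Lemma~\ref{lemma:main:1} and the full-trajectory repetition count of Eq.~(\ref{equ:repeat:1}) in Lemma~\ref{lemma:main:2}, followed by substituting $N_t$ from Eq.~(\ref{equ:euler:7}). Your side remarks are correct and more careful than the paper's own bookkeeping. The product does carry a factor $1/T$ that the stated bound drops (so $T=\mathcal{O}(1)$ is needed), and the informal estimate does silently absorb a $\log\chi=\Theta(\log\delta^{-1})$ factor. The uniform-in-$\varepsilon$ bound on the $T_{\text{evol}}$ denominator also needs a restriction such as $\varepsilon\le\Lambda/(2\sup_t\|L_2(t)\|_2)$. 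However, drop your aside that a data-only bound holds for larger $\varepsilon$: the hypothesis of Lemma~\ref{lemma:main:1} can fail there (e.g.\ $L_2=cI$ with $\varepsilon c>\Lambda$), so it is unsupported and not needed for the stated multiscale regime.
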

\par As the detailed quantum implementation is based on the same Schr\"odingerization framework as described in \cite{Jin2025LinearNonUnitary}, we omit a full exposition here and refer the reader to this reference for technical details.
\section{Improved Quantum IMEX Schemes for More General \texorpdfstring{$\mathbf{H}_1$}{H1}}

\label{section:timein}
\par The framework proposed above imposes relatively strict requirements on the Hermitian part $\mathbf{H}_1$. To generalize this framework, we use a refined error-analysis approach. In this section, we examine the special case of time-independent parameters, with $P_n = P$, $Q_n = Q$, and $b_n=b$. For Eq. \eqref{equ:system:1}, this corresponds to the following matrices $\mathbf{H}$ and $\mathbf{F}$:
\begin{equation*}
    \begin{aligned}
        \mathbf{H}=
        \begin{bmatrix}
        P & -Q &&\\
        & P & -Q &\\
        && \ddots & \ddots \\
        &&& P & -Q \\
        &&&& P\\
        \end{bmatrix},\quad
        \mathbf{F}=
        \begin{bmatrix}
        b\\
        b\\
        \vdots\\
        b\\
        Qu_0+b
        \end{bmatrix},
    \end{aligned}
\end{equation*}
where we do not require that $P$ and $Q$ commute. For brevity, this section focuses on estimating the evolution time $T_{\text{evol}}$, while omitting established procedures such as the associated query-complexity analysis, which has already been detailed in Section \ref{section:timede}. We first present an error-estimation method based on the matrix exponential, which provides an alternative estimate of the evolution time that can be tighter in some settings.
\subsection{Methods for Estimating Global Error Based on Matrix Exponential}
\par In Section \ref{section:timede}, we introduced two lemmas for estimating the evolution time $T_{\text{evol}}$. However, the methods in Lemmas \ref{lemma:error:1} and \ref{lemma:error:2} still impose requirements on the matrix eigenvalues. Their relative strengths and limitations can be summarized by the following chain of inequalities \cite{Krovi2023Improved}.
\begin{lemma}
\par For any matrix $A$, there is
\begin{equation*}
    \begin{aligned}
        \exp(\alpha(A) t)\le \|\exp(At)\|_2\le \exp(\mu(A)t)\le \exp(\|A\|_2t),
    \end{aligned}
\end{equation*}
where $\alpha(A)$ is the maximum real part of the eigenvalues of matrix $A$, and $\mu(A)=\lambda_{\max}\left(\frac{A+A^\dagger}{2}\right)$ is the logarithmic norm of $A$.
\qed
\end{lemma}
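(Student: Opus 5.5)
The plan is to prove the three inequalities separately, assuming throughout that $t\ge 0$, which is the regime in which the lemma is used. Each link in the chain comes from a different elementary argument.

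\emph{Leftmost inequality.} Let $\lambda$ be an eigenvalue of $A$ with $\operatorname{Re}\lambda=\alpha(A)$, and let $v\neq 0$ be a corresponding eigenvector with $\|v\|_2=1$. From the power series of the exponential we get $\exp(At)v=e^{\lambda t}v$. Hence
\[
\|\exp(At)\|_2\ge \|\exp(At)v\|_2=|e^{\lambda t}|=e^{\operatorname{Re}\lambda\, t}=\exp(\alpha(A)t).
\]
This step needs no assumptions beyond the existence of an eigenpair, which is automatic over $\mathbb{C}$.

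\emph{Middle inequality.} This is the same energy argument used in the proof of Lemma~\ref{lemma:error:2}. Fix an arbitrary $x_0$ and set $x(t)=\exp(At)x_0$, so that $x'(t)=Ax(t)$. Then
\[
\frac{\d}{\d t}\|x(t)\|_2^2=x^\dagger(A+A^\dagger)x=2x^\dagger\Big(\tfrac{A+A^\dagger}{2}\Big)x\le 2\mu(A)\|x(t)\|_2^2 .
\]
The inequality uses the Rayleigh quotient bound for the Hermitian matrix $\frac{A+A^\dagger}{2}$. Gr\"onwall's inequality then gives $\|x(t)\|_2\le e^{\mu(A)t}\|x_0\|_2$. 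Taking the supremum over unit vectors $x_0$ yields $\|\exp(At)\|_2\le\exp(\mu(A)t)$.

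\emph{Rightmost inequality.} It suffices to show $\mu(A)\le\|A\|_2$ and then use that $s\mapsto e^{st}$ is nondecreasing for $t\ge0$. For any unit vector $x$,
\[
x^\dagger\tfrac{A+A^\dagger}{2}x=\operatorname{Re}(x^\dagger Ax)\le|x^\dagger Ax|\le\|x\|_2\,\|Ax\|_2\le\|A\|_2 ,
\]
where the middle step is Cauchy--Schwarz. Maximizing the left-hand side over unit $x$ gives $\lambda_{\max}\big(\frac{A+A^\dagger}{2}\big)\le\|A\|_2$, which is the claim.

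There is no substantial obstacle in this argument. The only point that needs care is the middle step: it requires the differential inequality together with the sign condition $t\ge0$, and one must take the supremum over $x_0$ only after applying Gr\"onwall's inequality.
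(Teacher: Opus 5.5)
Your proof is correct, and your restriction to $t\ge 0$ is genuinely needed: for $t<0$ the upper bounds can fail, e.g.\ the middle one for $A=\mathrm{diag}(1,-1)$, $t=-1$.

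The paper gives no proof of this chain. It quotes it from \cite{Krovi2023Improved}, so your eigenvector argument for the leftmost bound and your bound $\mathrm{Re}(x^\dagger Ax)\le\|A\|_2$ for the rightmost bound have no counterpart there.

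The only link the paper argues is the middle one, in Lemma~\ref{lemma:lognorm}, and by a different route. It takes the limit definition $\mu(A)=\lim_{h\to0}(\|I+hA\|_2-1)/h$ and splits $e^{At}=(e^{hA})^{n}$ with $n=t/h$. It then bounds $\|e^{hA}\|_2=e^{-1}\|e^{I+hA}\|_2\le e^{\|I+hA\|_2-1}$ and lets $h\to0$. Finally it needs the separate first-order expansion in Lemma~\ref{lemma:appendix:D:1} to identify that limit with $\lambda_{\max}\bigl(\frac{A+A^\dagger}{2}\bigr)$.

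Your energy/Gr\"onwall argument, the same one used in Lemma~\ref{lemma:error:2}, works directly with the Hermitian part. It therefore skips both the limiting process and the identification step, and is cleaner for the $2$-norm statement as written. The paper's semigroup-splitting route, in exchange, never uses an inner product. It carries over verbatim to any induced norm with its corresponding logarithmic norm, which is the generality the paper alludes to in Lemma~\ref{lemma:lognorm}. Your argument is specific to $\|\cdot\|_2$.
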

\noindent Accordingly, we use the following lemma for error estimation.
\begin{lemma}
\label{lemma:error:3}
\par For the dynamical system presented in Eq.~(\ref{equ:system:3}), $\mathbf{u}(t)$ converges to the steady state $\mathbf{u}_{\infty}$ in the sense
\begin{equation}
    \begin{aligned}
        \label{equ:error:5}
        \Vert \mathbf{u}(t)-\mathbf{u}_{\infty}\Vert_2\le \Vert e^{-\mathbf{H}t}\Vert_2\Vert \mathbf{u}_0-\mathbf{u}_{\infty}\Vert_2.
    \end{aligned}
\end{equation}
\begin{proof}
\par The analytical solution to this ODE given in Eq.~(\ref{equ:error:3}) is
\begin{equation*}
    \begin{aligned}
        \mathbf{u}(t)-\mathbf{u}_{\infty}=e^{-\mathbf{H}t}[\mathbf{u}_0-\mathbf{u}_{\infty}].
    \end{aligned}
\end{equation*}
Therefore, by taking 2-norm on both sides and utilizing the consistency property of norms, we obtain:
\begin{equation*}
    \begin{aligned}
        \Vert\mathbf{u}(t)-\mathbf{u}_{\infty}\Vert_2\le \Vert e^{-\mathbf{H}t}\Vert_2\Vert \mathbf{u}_0-\mathbf{u}_{\infty}\Vert_2.
    \end{aligned}
\end{equation*}
This completes the proof.
\end{proof}
\end{lemma}
\subsection{Query Complexity Analysis}
\par Direct computation of $e^{-\mathbf{H}t}$ can be cumbersome. We therefore use the Laplace transform and its inverse to obtain an exact expression. The details are provided in Appendix Section \ref{section:B}. We obtain the following upper bound for its norm:
\begin{equation}
    \begin{aligned}
        \label{equ:norm:3new}
        \Vert e^{-\mathbf{H}t}\Vert_2\le \Vert e^{-Pt+\int_0^t \Vert\tilde{Q}(\tau) \Vert_2\d\tau}\Vert_2,
    \end{aligned}
\end{equation}
where $\tilde{Q}(t)=e^{Pt}Qe^{-Pt}$. If $Q=I$, i.e., for the fully implicit scheme with $P=I-\tau L$, one obtains the simpler upper bound
\begin{equation}
    \begin{aligned}
        \label{equ:norm:4new}
        \Vert e^{-\mathbf{H}t}\Vert_2\le e^t\Vert e^{-Pt}\Vert_2=\Vert e^{(-P+I)t}\Vert_2=\Vert e^{\tau Lt}\Vert_2.
    \end{aligned}
\end{equation}
This suggests that the IMEX method may admit further improvements in the time-independent case, and here we outline a basic route in that direction.
\par Although Eq.~(\ref{equ:norm:4new}) provides a sharper bound, it does not yield an explicit closed-form expression for $T_{\text{evol}}$. Following Krovi \cite{Krovi2023Improved}, we present the following three approximate estimates. First, the following logarithmic-norm lemma can be regarded as a special case of Eq.~(\ref{equ:time:3}):
\begin{lemma}
\label{lemma:lognorm}
\par Let $A$ be an arbitrary square matrix. The induced matrix norm of its exponential satisfies the following upper bound, valid for any consistent matrix norm:
\begin{equation}
    \begin{aligned}
        \Vert  e^{At}\Vert_2 \le e^{\mu(A)t},
    \end{aligned}
\end{equation}
in which $\mu(A)$ is
\begin{equation*}
    \begin{aligned}
        \mu(A)=\lim\limits_{h\to 0}\frac{\Vert  I+hA\Vert_2 -1}{h}=\lambda_{\max}\left(\frac{A+A^\dagger}{2}\right).
    \end{aligned}
\end{equation*}
\begin{proof}
\par Since $h>0$, we can set $n=\frac{t}{h}$, and then use the compatibility of the matrix exponential norm to obtain
\begin{equation*}
    \begin{aligned}
        \Vert e^{At}\Vert_2 \le \Vert e^{hA}\Vert_2^n=\Vert e^{I+hA}\Vert_2^n\cdot e^{-n}\le e^{(\Vert I+hA\Vert_2 -1)n}=e^{\frac{\Vert  I+hA\Vert_2 -1}{h}t},
    \end{aligned}
\end{equation*}
and
\begin{equation*}
    \begin{aligned}
        \Vert e^{At}\Vert_2 \le \lim\limits_{h\to0}e^{\frac{\Vert  I+hA\Vert_2 -1}{h}t}=e^{\mu(A)t}.
    \end{aligned}
\end{equation*}
The exact value of $\mu(A)$ is proved in Lemma \ref{lemma:appendix:D:1}, and this completes the proof.
\end{proof}
\end{lemma}
\noindent For the following two lemmas on the Jordan and Schur decompositions, we directly cite the conclusions from Krovi's work \cite{Krovi2023Improved} without proof.

\begin{lemma}
\par \cite{Krovi2023Improved} Let $A$ be an arbitrary square matrix with the largest eigenvalue being $\lambda$, and let its Jordan decomposition be $A = V^{-1} J V$, $\alpha$ be the size of its largest Jordan block. Then the following inequality holds:
\begin{equation}
    \begin{aligned}
        \Vert  e^{At}\Vert_2 \le \kappa(V)\alpha\max\limits_{0\le r\le\alpha-1}\frac{t^r}{r!}e^{\lambda t},
    \end{aligned}
\end{equation}
where $\kappa(V)$ is the condition number of $V$.
\end{lemma}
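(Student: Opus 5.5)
The plan is to reduce the claim to the Jordan form and then bound the exponential of a single Jordan block directly. Writing $A=V^{-1}JV$, we have $e^{At}=V^{-1}e^{Jt}V$. Submultiplicativity of the induced 2-norm then gives
\begin{equation*}
\Vert e^{At}\Vert_2\le \Vert V^{-1}\Vert_2\Vert V\Vert_2\Vert e^{Jt}\Vert_2=\kappa(V)\Vert e^{Jt}\Vert_2 .
\end{equation*}
So it suffices to show $\Vert e^{Jt}\Vert_2\le \alpha\max_{0\le r\le\alpha-1}\frac{t^r}{r!}e^{\lambda t}$. Here $\lambda$ is understood as the eigenvalue with largest real part, and the factor $e^{\lambda t}$ is read as $e^{\operatorname{Re}\lambda\, t}$ in modulus. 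We take $t\ge 0$ throughout.

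Since $J=\bigoplus_k J_k$ is block diagonal, $e^{Jt}=\bigoplus_k e^{J_kt}$. Hence $\Vert e^{Jt}\Vert_2=\max_k\Vert e^{J_kt}\Vert_2$, and we only need to treat one block. Write a block of size $m\le\alpha$ as $J_k=\lambda_kI+N$, with $N$ the nilpotent shift satisfying $N^m=0$. Because $\lambda_kI$ and $N$ commute,
\begin{equation*}
e^{J_kt}=e^{\lambda_kt}\sum_{r=0}^{m-1}\frac{t^r}{r!}N^r .
\end{equation*}
Each $N^r$ is a partial shift, so $\Vert N^r\Vert_2\le 1$. The triangle inequality therefore gives
\begin{equation*}
\Vert e^{J_kt}\Vert_2\le |e^{\lambda_kt}|\sum_{r=0}^{m-1}\frac{t^r}{r!}\le e^{\operatorname{Re}\lambda_k\,t}\, m\max_{0\le r\le m-1}\frac{t^r}{r!}.
\end{equation*}
To finish, we use $m\le\alpha$ and $\operatorname{Re}\lambda_k\le\operatorname{Re}\lambda$. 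We also note that the maximum over $r\le m-1$ is at most the maximum over $r\le\alpha-1$, since that maximum is taken over a larger index set of nonnegative terms.

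There is no real obstacle here. The only points needing care are interpretive conventions. First, "largest eigenvalue" has to mean largest real part, since $A$ may have complex spectrum. Second, the stated direction $A=V^{-1}JV$ must be matched consistently so that the constant comes out as $\kappa(V)=\Vert V\Vert_2\Vert V^{-1}\Vert_2$. Third, the bound $\Vert N^r\Vert_2\le1$ should be checked explicitly: $N^r$ maps basis vectors to basis vectors or to zero, so its singular values lie in $\{0,1\}$.

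A slightly sharper alternative, if wanted, is to replace the sum by $\sum_r t^r/r!$ directly. The stated form with $\alpha\max_r$ is simply this sum bounded termwise, which is what matches the lemma.
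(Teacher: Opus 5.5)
Your proof is correct. The steps are the similarity reduction $\Vert e^{At}\Vert_2\le\kappa(V)\Vert e^{Jt}\Vert_2$, the blockwise reduction, and the finite nilpotent expansion with $\Vert N^r\Vert_2\le 1$; together they give exactly the stated bound for $t\ge 0$, with $\lambda$ read as the eigenvalue of largest real part. The paper gives no proof of its own and only cites Krovi, and your derivation is the standard Jordan-block argument behind that cited bound, so nothing further is needed.
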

\begin{lemma}
\par \cite{Krovi2023Improved} Let $A$ be an arbitrary square matrix with the largest eigenvalue being $\lambda$, and let its Schur decomposition be $A = U(D + N)U^\dagger$, where $D$ is a diagonal matrix and $N$ is a strictly upper triangular matrix. Then the following inequality holds:
\begin{equation}
    \begin{aligned}
        \Vert  e^{At}\Vert_2 \le \sum\limits_{k=0}^n\frac{(\Vert N\Vert_2 t)^k}{k!}e^{\lambda t}.
    \end{aligned}
\end{equation}
Since the Schur decomposition of $A$ is not unique, we can choose the one with the smallest $\Vert N\Vert $.
\end{lemma}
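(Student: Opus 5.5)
The plan is to work entirely with the Schur form and the power series of the exponential. Write $A=U(D+N)U^\dagger$ with $U$ unitary, $D$ diagonal carrying the eigenvalues of $A$, and $N$ strictly upper triangular. Since the $2$-norm is unitarily invariant, $\Vert e^{At}\Vert_2=\Vert e^{(D+N)t}\Vert_2$, so it suffices to bound the exponential of the upper triangular matrix $T=D+N$. Here $\lambda$ is to be read as the largest real part of the eigenvalues, i.e.\ $\alpha(A)$, which is what the bound needs. Also, $n$ in the sum is the dimension, so that $N^n=0$.

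\textbf{Step 1: expansion.} $D$ and $N$ do not commute, so $e^{(D+N)t}\neq e^{Dt}e^{Nt}$ in general. Instead, use the variation-of-constants (Dyson) expansion with respect to the splitting $T=D+N$:
\begin{equation*}
e^{Tt}=\sum_{k\ge0}\int_{0\le s_1\le\cdots\le s_k\le t} e^{D(t-s_k)}Ne^{D(s_k-s_{k-1})}N\cdots Ne^{Ds_1}\,\d s_1\cdots\d s_k .
\end{equation*}
This comes from iterating $e^{Tt}=e^{Dt}+\int_0^t e^{D(t-s)}Ne^{Ts}\,\d s$. The series terminates after $k=n-1$. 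Reason: each term is an upper triangular product containing $k$ strictly upper triangular factors $N$, interleaved with diagonal factors. Such a product vanishes when $k\ge n$, because each strictly upper triangular factor raises the superdiagonal index by at least one.

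\textbf{Step 2: term bounds.} Each diagonal factor satisfies $\Vert e^{Ds}\Vert_2=\max_j|e^{d_js}|=e^{\lambda s}$ for $s\ge0$. In the $k$-th term the diagonal times sum to $t$, so their product contributes $e^{\lambda t}$, and the $N$ factors contribute $\Vert N\Vert_2^k$. The simplex has volume $t^k/k!$. Summing over $k$ gives
\begin{equation*}
\Vert e^{At}\Vert_2\le\sum_{k=0}^{n-1}\frac{(\Vert N\Vert_2t)^k}{k!}e^{\lambda t}\le\sum_{k=0}^{n}\frac{(\Vert N\Vert_2t)^k}{k!}e^{\lambda t}.
\end{equation*}

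\textbf{Main obstacle.} The delicate point is justifying both the termination and the exact Dyson form. Grouping the integrand as a product of diagonal and strictly upper triangular factors and checking the nilpotency index directly handles termination cleanly. An alternative would be to bound entrywise via divided differences, but that gives a worse constant. The final remark, choosing the Schur form that minimizes $\Vert N\Vert$, follows because the bound holds for every Schur decomposition.
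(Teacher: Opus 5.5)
Your proof is correct and complete. The paper gives no argument for this lemma: it quotes the bound from Krovi without proof, and the bound itself is the classical Schur-form estimate going back to Van Loan. So there is no paper route to compare against. Your route is a standard proof of the result: unitary invariance reduces to $T=D+N$; the variation-of-constants expansion uses the splitting $D+N$; for $s\ge 0$ one has $\Vert e^{Ds}\Vert_2=e^{\lambda s}$, with the exponents telescoping to $e^{\lambda t}$; and the simplex contributes volume $t^k/k!$. It even gives the slightly sharper truncation at $k=n-1$.

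Your reading of $\lambda$ as the spectral abscissa $\alpha(A)$ is the intended one, and it is what makes $\Vert e^{Ds}\Vert_2=e^{\lambda s}$ an equality.

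The step you call delicate is easier than you suggest. Iterate $e^{Tt}=e^{Dt}+\int_0^t e^{D(t-s)}Ne^{Ts}\,\mathrm{d}s$ exactly $n$ times. The remainder integrand $e^{D(t-s_n)}N\cdots e^{D(s_2-s_1)}Ne^{Ts_1}$ contains $n$ strictly upper triangular factors $e^{D(\cdot)}N$, followed by the upper triangular matrix $e^{Ts_1}$, so it vanishes identically. This gives the finite expansion as an exact identity, with no need to discuss convergence of an infinite Dyson series.
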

\begin{remark}
\par Here we provide an example to illustrate a potential advantage of the method in this section over the result of Eq.~(\ref{equ:time:3}). Consider $A = \begin{bmatrix} -1 & a \\ 0 & -2 \end{bmatrix}$ with $a>0$.
\begin{itemize}
    \item The eigendecomposition of $A = V^{-1} \Lambda V$ is given by $\Lambda = \begin{bmatrix} -1 & 0 \\ 0 & -2 \end{bmatrix}$, $V = \begin{bmatrix} 1 & a \\ 0 & -1 \end{bmatrix}$, which corresponds to a Jordan decomposition with the largest Jordan block of size 1. Thus, we have $\kappa(V) \le (1 + a)^2$, $\lambda = -1$.
    \item $\lambda_{\max}\left( \frac{A + A^\dagger}{2} \right) = \frac{-3 + \sqrt{1 + a^2}}{2}$. When $a$ is sufficiently large, one has $\kappa(V) e^{\lambda t} < e^{\mu(A) t}$.
\end{itemize}
\end{remark}

\section{Applications to Specific PDEs}
\subsection{The Linear Heat Equation with Time-Dependent Coefficients}
\par Let us consider the following linear heat equation with time-dependent diffusion coefficients, together with boundary and initial conditions:
\begin{gather}
    \label{equ:heat}
    \nonumber\partial_t u(x,t)=\sum\limits_{i=1}^d\frac{a_i(t)}{\varepsilon}\partial_{x_i x_i}u(x,t),\text{ in $\Omega:(0,1)^d$, $0<t<T$,}\\
    u(x,0)=u_0(x),\quad u(\cdot,t)=0\text{ on $\partial\Omega$},
    \label{equ:heat:1}
\end{gather}
in which $u(x,t)$ represents the temperature distribution (or another diffusive quantity such as concentration) at position $x$ and time $t$, and $a_i(t)/\varepsilon>0$ is the thermal diffusivity (or diffusion coefficient) associated with the $i$-th spatial variable at time $t$. We assume that $a_i(t)$ is of order $\mathcal{O}(1)$, while allowing it to vary over time or take very large values. Several quantum algorithms for the heat equation are already available. Jin \textit{et al.} \cite{Jin2022TimeCA} combined finite differences with an HHL-based algorithm to obtain a QLSA algorithm for the heat equation, and estimated its query complexity as $\mathcal{O}(N_x^2\log (N_x/d))$, where $N_x$ is the number of discrete spatial variables, i.e., $N_x \sim h^{-1}$. Subsequently, Jin \textit{et al.} \cite{Jin2024Schrodingerization} proposed a Hamiltonian-simulation algorithm for the heat equation. They used the Schr\"odingerization method to transform the discretized spatial matrix into a Hermitian matrix. Ignoring the additional dependence on the Schr\"odingerization parameters, their time complexity is $\mathcal{O}(d\varepsilon^{-1}N_x^2\log N_x)$ (where $\mathcal{O}$ ignores the $\log\log$ term), which is comparable to that of the QLSA-based algorithm.
\subsubsection{Finite Difference Schemes}
\par We discretize this $d$-dimensional equation directly in both time and space. We divide the time interval $t\in[0,T]$ into $N_t+1$ points, where $0=t_0<t_1<\cdots<t_{N_t-1}<t_{N_t}=T$, and $\tau=T/N_t$. For each spatial dimension $x_k\in[0,1]$, we divide it into $N_x+1$ points, where $0=x_{k,0}<x_{k,1}<\cdots<x_{k,N_x-1}<x_{k,N_x}=1$, and $h=1/N_x$. 
\par At time $t=n\tau$, let $u_n=\text{vec}[U_n]$, where $U_n$ is a $d$-dimensional tensor whose $k$-th index corresponds to the $k$-th spatial dimension. We also denote $(a_k)_n=a_k(n\tau)$. We discretize the temporal derivative as $\partial_t u=(u_{n+1}-u_n)/\tau$, and the Laplacian in each spatial dimension as $\partial_{xx} u=h^{-2}L_hu$. Combining these discretizations, we obtain the following scheme:
\begin{equation*}
    \begin{aligned}
        P_n u_{n+1}=Q_n u_n+b_n,
    \end{aligned}
\end{equation*}
in which the specific matrix format is expressed as
\begin{gather*}
    P_n=\varepsilon I^{\otimes d}-\lambda \left(\sum\limits_{k=0}^{d-1}I^{\otimes k}\otimes (a_{k+1})_nL_h\otimes I^{\otimes(d-k-1)}\right):=\varepsilon I-\lambda L_{h,d},\quad 
    Q_n=\varepsilon I,\\
    b_n=\varepsilon[u(n\tau,x_0,\cdots,x_0),0,\cdots,0,u(n\tau,x_{N_x},\cdots,x_0),\cdots,u(n\tau,x_{N_x},\cdots,x_0);0;\cdots;0;u(n\tau,x_{N_x},\cdots,x_{N_x})],\\
    u_0=[u(0,x_0,\cdots,x_0);u(0,x_1,\cdots,x_0);\cdots;u(0,x_{N_x},\cdots,x_0);\cdots;u(0,x_{N_x},\cdots,x_0);\cdots;u(0,x_{N_x},\cdots,x_{N_x})],
\end{gather*}
where $\lambda=\frac{\tau}{h^2}$, $L_h$ is called the second derivative matrix, and its specific format is as follows:
\begin{equation*}
    \begin{aligned}
        L_h=
        \begin{bmatrix}
        -2 & 1 &&&\\
        1 & -2 & 1 &&\\
        & \ddots & \ddots & \ddots &\\
        && 1 & -2 & 1 \\
        &&& 1 & -2
        \end{bmatrix}.
    \end{aligned}
\end{equation*}
Note that this discretization uses a fully implicit scheme, and under our assumptions it is sufficient that all eigenvalues of $P_n$ be positive. Therefore, the system is unconditionally stable: there is no CFL restriction, and the equation can still be solved efficiently in the presence of stiff terms.

\subsubsection{Query Complexity Analysis}

\par Below we analyze the query complexity, which requires computing $\chi = s\|\mathbf{H}_{\text{schr}}\|_{\max}T_{\text{evol}}$. In this calculation, the estimate of the evolution time $T_{\text{evol}}$ is particularly important. Using Lemma \ref{lemma:main:1} and Eq.~(\ref{equ:time:3}), one can see that $T_{\text{evol}}$ depends on $\min\limits_{n=0}^{N_t-1}\lambda_{\min}\left(\frac{P_n+P_n^\dagger}{2}\right)$; for a specific time index $n$, one has
\begin{equation*}
    \begin{aligned}
        \lambda_{\min}\left(\frac{P_n+P_n^\dagger}{2}\right)&=\varepsilon-\lambda \lambda_{\max}(L_{h,d}).
    \end{aligned}
\end{equation*}
Specifically, through Weyl's inequality, we give an upper bound for $\lambda_{\max}(L_{h,d})$:
\begin{equation*}
    \begin{aligned}
        \lambda_{\max}(L_{h,d})&\le \sum\limits_{k=0}^{d-1}\lambda_{\max}\left(I^{\otimes k}\otimes (a_{k+1})_nL_h\otimes I^{\otimes(d-k-1)}\right)\\
        &\le 
        {\|a\|_{\max}}\sum\limits_{k=0}^{d-1}\lambda_{\max}\left(L_h\right)\\
        &=2d\|a\|_{\max}\left(-1+\cos\frac{\pi}{N_x+1}\right).
    \end{aligned}
\end{equation*}
where $\|a\|_{\max} = \max\limits_{k,n}(a_k)_n$.
Therefore, one can obtain the estimate of the evolution time through Eq.~(\ref{equ:time:3}) as
\begin{equation*}
    \begin{aligned}
        T_{\text{evol}}\ge\frac{\log\delta^{-1}}{\min\limits_{n=0}^{N_t-1}\lambda_{\min}\left(\frac{P_n+P_n^\dagger}{2}\right)-\varepsilon}&\gtrsim \log\delta^{-1}\cdot\frac{N_x^2}{\lambda d}.
    \end{aligned}
\end{equation*}
\par 
For the sparsity $s$, the sparsities of $P_n$ and $Q_n$ are of orders $\mathcal{O}(d)$ and $\mathcal{O}(1)$, respectively; hence $s=\mathcal{O}(d)$. Since $\|\mathbf{H}_{\text{schr}}\|_{\max}$ depends on the ratio $\frac{\tau}{h}^2=\mathcal{O}(1)$, we obtain $\|\mathbf{H}_{\text{schr}}\|_{\max}=\mathcal{O}(\log\delta^{-1})$. Thus the Hamiltonian-simulation stage requires $\mathcal{O}(N_x^2\log N_x(\log\delta^{-1})^2)$ queries. For the measurement cost, if each entry of $u_n$ and $u_0$ is $\mathcal{O}(1)$, then $\|\mathbf{F}\|_2=\mathcal{O}(\varepsilon\sqrt{N_x^d})$ and $\|\mathbf{u}(0)\|_2=\|\mathbf{u}(T_{\text{evol}})\|_2=\mathcal{O}(\sqrt{N_tN_x^d})$. Therefore the amplitude-amplification overhead scales as $\mathcal{O}(1)$ for fixed $d$, and the overall query complexity is $\mathcal{O}(N_x^2\log N_x(\log\delta^{-1})^2)$ ($\mathcal{O}(N_x^2(\log N_x)^3)$ if $N_x=\delta^{-1}$). In this homogeneous-boundary case, the measurement overhead is controlled by the initial-data block rather than by the boundary forcing. In addition, under this fully implicit formulation, our approach avoids an explicit CFL-type restriction and remains stable for large coefficients $a_i(t)$, which may be beneficial for stiff multiscale regimes.

\subsubsection{Numerical Example}

\par To illustrate the performance of our method for the linear heat equation, we consider a time-dependent PDE with sufficiently large coefficients $a_i(t)$ that increase over time. 
\par Specifically, we perform numerical simulations in both one-dimensional (1D) and two-dimensional (2D) cases (Figs. \ref{fig:result:1} and \ref{fig:result:2}). In the 1D case, the black curve denotes the numerical solution obtained using the classical scheme for Eq.~(\ref{equ:heat:1}), while the black dots denote the solution obtained by our quantum method in Eq.~(\ref{equ:system:6}). In the 2D case, we plot the three-dimensional surfaces produced by both methods.
\par In both cases, the two numerical solutions agree well, supporting the consistency of the proposed approach.

\begin{figure}[htbp]
\centering{\includegraphics[width=0.375\linewidth]{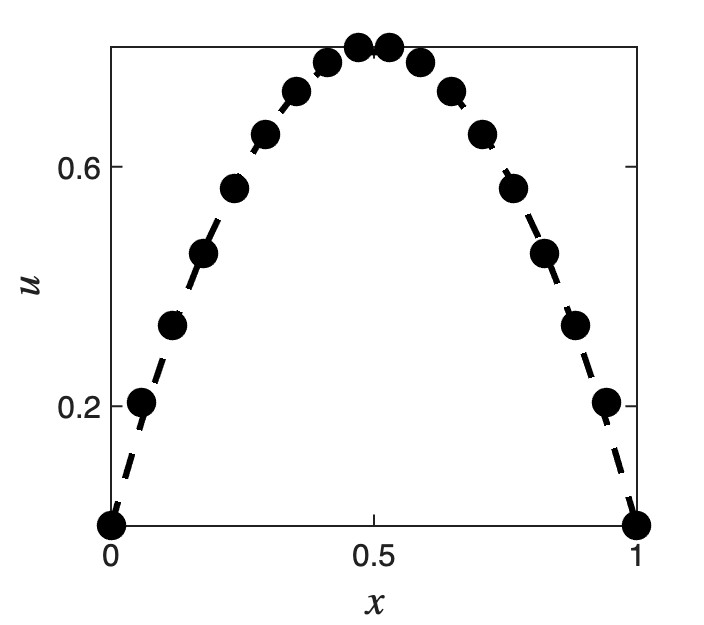}}
    \caption{Numerical results for Eq.~(\ref{equ:heat:1}) in the 1D case after Schr\"odingerization, shown at $t=0.1$, with $a(t)=100/(t+1)$ and $u_0(x)=1$ for $x\in[0,1]$. The spatial and temporal discretization parameters are $\Delta x = 2^{-4}$ and $\Delta t = \Delta x^2/2$, respectively. Results from our scheme are shown by circular markers and compared with the classical-scheme solution shown by the black curve.}
    \label{fig:result:1}
\end{figure}
\begin{figure}[htbp]
    \centering
    \subfigure[The classical scheme]{\includegraphics[width=0.375\linewidth]{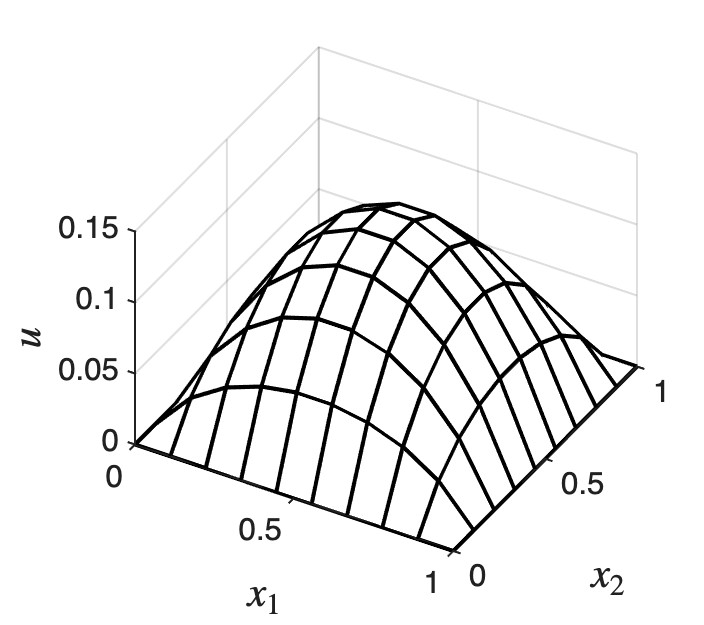}}\qquad
    \subfigure[The quantum IMEX scheme]{\includegraphics[width=0.375\linewidth]{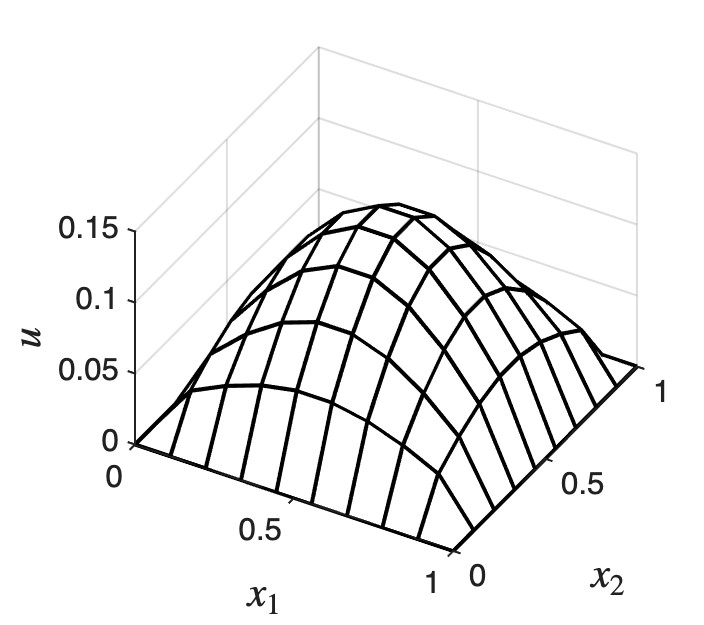}}
    \caption{Numerical results for Eq.~(\ref{equ:heat:1}) in the 2D case after Schr\"odingerization, shown at $t=0.1$. Panel (a) shows the classical scheme, and panel (b) shows the quantum IMEX scheme. The spatial and temporal discretization parameters are $\Delta x_1 = \Delta x_2 = 2^{-3}$ and $\Delta t = \Delta x_1^2/2 = \Delta x_2^2/2$, respectively.}
    \label{fig:result:2}
\end{figure}

\subsection{The Multiscale Telegraph Equation}
\par We consider the following multiscale telegraph equation \cite{Jin2000DiffusiveRelaxation}:
\begin{equation}
    \begin{aligned}
        \label{equ:multiscale:1}
        \partial_tu(t,x)+\partial_xv(t,x)&=0,\\
        \varepsilon^2\partial_tv(t,x)+a(t)\partial_xu(t,x)&=-v(t,x),\quad (0<\varepsilon\ll 1),
    \end{aligned}
\end{equation}
in which $a(t)$ is the propagation speed varying with time, and $\varepsilon$ is the scaling parameter. Because of the numerical stiffness induced by the convection and collision terms \cite{Jin2000DiffusiveRelaxation}, this system is computationally demanding. Jin \textit{et al.} \cite{Jin2000DiffusiveRelaxation} addressed this difficulty by reformulating it as a linear hyperbolic system with a stiff relaxation term, known as the diffusion relaxation system:
\begin{equation}
    \begin{aligned}
        \label{equ:multiscale:2}
        \partial_tu(t,x)+\partial_xv(t,x)&=0,\\
        \partial_tv(t,x)+\partial_xu(t,x)&=-\frac{1}{\varepsilon^2}(v(t,x)+(a(t)-\varepsilon^2)\partial_xu(t,x)),\quad (0<\varepsilon\ll 1).
    \end{aligned}
\end{equation}
\par The construction of quantum algorithms for multiscale equations differs from that for general equations because, if the stiff terms are not handled properly, the query complexity will depend on the scaling parameter $\varepsilon$, which may offset the quantum advantage. For the multiscale telegraph equation, effective HHL-based quantum algorithms have already been studied. Jin \textit{et al.} \cite{Jin2022TimeCA} presented quantum algorithms for the special case $a=1$, based on the IMEX scheme and the diffusive relaxation scheme, respectively. The final query complexity of both results is $\mathcal{O}(N_x^2\log N_x)$. Furthermore, He \textit{et al.} \cite{He2023TimeCA} studied more general linear transport equations and likewise derived a query complexity of $\mathcal{O}(N_x^2\log N_x)$. To the best of our knowledge, Hamiltonian-simulation-based treatments of this setting remain relatively limited. 
Compared with more general frameworks such as \cite{Hu2024QuantumMultiscale}, the present IMEX time discretization is designed to reduce scaling-parameter dependence and facilitate the use of Hamiltonian simulation for this stiffness problem.

\subsubsection{IMEX Asymptotic-Preserving Schemes}
\par The core idea of AP (Asymptotic-Preserving) is to design numerical methods that preserve the asymptotic limit from microscopic models to macroscopic models within a discretized framework \cite{Jin2022AsymptoticpreservingSF}. We consider the IMEX method to solve Eq.~(\ref{equ:multiscale:2}) \cite{Jin2022AsymptoticpreservingSF}:
\begin{equation}
    \begin{aligned}
        \label{equ:multiscale:3}
        &\frac{u_{n+1,j}-u_{n,j}}{\tau}+\frac{v_{n+1,j+1}-v_{n+1,j-1}}{2h}-\frac{h^{1/\beta}}{2}\frac{u_{n,j-1}-2u_{n,j}+u_{n,j+1}}{h^2}=0,\\
        &\frac{v_{n+1,j}-v_{n,j}}{\tau}+\frac{u_{n,j+1}-u_{n,j-1}}{2h}-\frac{h^{1/\beta}}{2}\frac{v_{n,j-1}-2v_{n,j}+v_{n,j+1}}{h^2}\\
        &\qquad\qquad\qquad\qquad\qquad\qquad\qquad\qquad\qquad\qquad\qquad=-\frac{1}{\varepsilon^2}\left(v_{n+1,j}+(a_n-\varepsilon^2)\frac{u_{n+1,j+1}-u_{n+1,j-1}}{2h}\right),
    \end{aligned}
\end{equation}
in which $u_{n,j}$ represents the state at the $j$-th spatial point and the $n$-th time step in the discrete setting, and $\beta\in[1,+\infty)$. As before, we discretize the spatial indices as $j=1,\cdots,N_x-1$, the time steps as $n=0,\cdots,N_t-1$, and denote the spatial and temporal step sizes by $h$ and $\tau$, respectively. We also denote $a_n=a(n\tau)$. The relaxation term is treated implicitly in this scheme. Within our iterative framework, the nodal states can be obtained without explicitly inverting the matrix. Furthermore, letting $u_n=[u_{n,1};\cdots;u_{n,N_x-1}]$ and $v_n=[v_{n,1};\cdots;v_{n,N_x-1}]$, we obtain the matrix form of Eq.~(\ref{equ:multiscale:3}) as follows:
\begin{equation}
    \begin{aligned}
        \label{equ:multiscale:4}
        &\frac{u_{n+1}-u_n}{\tau}+\frac{1}{2h}M_h v_{n+1}-\frac{1}{2h^{2-1/\beta}}L_h u_n-\frac{1}{2h^{2-1/\beta}}(b_{1,n+1}-c_{2,n+1})=0,\\
        &\frac{v_{n+1}-v_n}{\tau}+\frac{1}{2h}M_h u_n-\frac{1}{2h^{2-1/\beta}}L_h v_n+\frac{1}{2h^{2-1/\beta}}(b_{2,n}-c_{1,n})\\
        &\qquad\qquad\qquad\qquad\qquad\qquad\qquad\qquad\qquad\qquad=-\frac{1}{\varepsilon^2}\left(v_{n+1}+\frac{(a_n-\varepsilon^2)}{2h}M_h u_{n+1}+\frac{(a_n-\varepsilon^2)}{2h}b_{2,n+1}\right).
    \end{aligned}
\end{equation}
where the non-homogeneous term is
\begin{equation*}
    \begin{aligned}
        b_{1,n}=[u_{n,0};0;\cdots;0;u_{n,N_x}],\quad 
        b_{2,n}=[-u_{n,0};0;\cdots;0;u_{n,N_x}],\\
        c_{1,n}=[v_{n,0};0;\cdots;0;v_{n,N_x}],\quad
        c_{2,n}=[-v_{n,0};0;\cdots;0;v_{n,N_x}],
    \end{aligned}
\end{equation*}
and $L_h$ is called the second derivative matrix, as previously defined. $M_h$ is referred to as the central difference matrix, and its specific form is as follows
\begin{equation*}
    \begin{aligned}
        M_h =
        \begin{bmatrix}
        0 & 1 &&&\\
        -1 & 0 & \ddots &&\\
        & \ddots & \ddots & \ddots &\\
        && \ddots & 0 & 1 \\
        &&& -1 & 0 \\
        \end{bmatrix}. 
    \end{aligned}
\end{equation*}
\par Eq.~(\ref{equ:multiscale:4}) represents a system of two variables. Let $w_n:=[u_n;v_n]$; then Eq.~(\ref{equ:multiscale:4}) can be written in the following iterative form:
\begin{equation}
    \begin{aligned}
        \label{equ:multiscale:5}
        P_n w_{n+1}=Q_n w_n+b_n,
    \end{aligned}
\end{equation}
in which $\lambda=\frac{\tau}{h}$ and $A_n:=a_n-\varepsilon^2>0$, and
\begin{gather*}
    P_n=\begin{bmatrix}
        I & \frac{\lambda}{2}M_h\\
        \frac{\lambda A_n}{2\varepsilon^2}M_h & \left(1+\frac{\tau}{\varepsilon^2}\right) I
    \end{bmatrix},\quad
    Q_n=\begin{bmatrix}
        I+\frac{\lambda}{2h^{1-1/\beta}}L_h & O\\
        -\frac{\lambda}{2}M_h & I+\frac{\lambda}{2h^{1-1/\beta}}L_h
    \end{bmatrix},\\
    b_n=
    \begin{bmatrix}
        \frac{\lambda}{2h^{1-1/\beta}}(b_{1,n+1}-c_{2,n+1})\\
        -\frac{\lambda}{2h^{1-1/\beta}}(b_{2,n}-c_{1,n})-\frac{\lambda A_n}{2\varepsilon^2}b_{2,n+1}
    \end{bmatrix}.
\end{gather*}
Note that here we use an implicit-explicit hybrid method, so the CFL condition must still be respected, i.e., $\lambda=\frac{\tau}{h}<1$. Under the updated discretization, the singular factor appears in both the lower-left and lower-right blocks of $P_n$. To remove the explicit $\varepsilon^{-2}$ dependence and to make the off-diagonal part of $\hat{P}_n$ purely anti-Hermitian, we introduce the rescaling below and define $\hat{P}_n=S_n\cdot P_n\cdot T_n$, $\hat{Q}_n=S_n\cdot Q_n\cdot T_n$, $\hat{b}_n=S_n\cdot b_n$, and $\hat{w}_n=T_n^{-1}w_n$:
\begin{equation*}
    \begin{aligned}
        S_n=\begin{bmatrix}
            I\\
            & \frac{\varepsilon^2}{\sqrt{\tau A_n}}I
        \end{bmatrix},\quad
        T_n=\begin{bmatrix}
            I\\
            & \sqrt{\frac{A_n}{\tau}}I
        \end{bmatrix}.
    \end{aligned}
\end{equation*}
This leads to the following iterative form:
\begin{equation}
    \begin{aligned}
        \label{equ:multiscale:6}
        \hat{P}_n \hat{w}_{n+1}=\hat{Q}_n \hat{w}_n+\hat{b}_n,
    \end{aligned}
\end{equation}
in which $\lambda=\frac{\tau}{h}<h^{1-1/\beta}$, and
\begin{gather*}
    \hat{P}_n=\begin{bmatrix}
        I & \frac{\sqrt{\tau A_n}}{2h}M_h\\
        \frac{\sqrt{\tau A_n}}{2h}M_h & \left(1+\frac{\varepsilon^2}{\tau}\right) I
    \end{bmatrix},\quad
    \hat{Q}_n=\begin{bmatrix}
        I+\frac{\lambda}{2h^{1-1/\beta}}L_h & O\\
        -\frac{\varepsilon^2}{2h}\sqrt{\frac{\tau}{A_n}}M_h & \frac{\varepsilon^2}{\tau} (I+\frac{\lambda}{2h^{1-1/\beta}}L_h)
    \end{bmatrix},\\
    \hat{b}_n=
    \begin{bmatrix}
        \frac{\lambda}{2h^{1-1/\beta}}(b_{1,n+1}-c_{2,n+1})\\
        -\frac{\varepsilon^2}{2h^{2-1/\beta}}\sqrt{\frac{\tau}{A_n}}(b_{2,n}-c_{1,n})-\frac{\sqrt{\tau A_n}}{2h}b_{2,n+1}
    \end{bmatrix}.
\end{gather*}
For convenience in the estimates below, define
\begin{equation*}
    \begin{aligned}
        \tilde{\lambda}:=\frac{\tau}{h^{2-1/\beta}}=\frac{\lambda}{h^{1-1/\beta}},\qquad 0<\tilde{\lambda}<1.
    \end{aligned}
\end{equation*}
More precisely, Appendix Section \ref{section:appendix:telegraph:dissipative} proves that, for every fixed $\beta\in[1,+\infty)$,
\begin{equation*}
    \begin{aligned}
        \max_{0\le n\le N_t}\left(\|u_n-u(t_n)\|_2+\varepsilon\|v_n-v(t_n)\|_2\right)\le C\left(\tau+h^2+h^{1/\beta}\right),
    \end{aligned}
\end{equation*}
see Eq.~(\ref{equ:appendix:dissipative:6}). Hence the added dissipation vanishes as $h\to0$, so $\beta$ changes the pre-asymptotic smoothing and the complexity balance through $\tilde{\lambda}$, but it does not change the final continuum limit.
\subsubsection{Query Complexity Analysis}
\par The query-complexity analysis for the IMEX scheme applied to the multiscale telegraph equation is more involved than for the previous two examples. Because we do not use a fully implicit scheme, Eq.~(\ref{equ:time:3}) requires estimates of both $\lambda_{\min}\left(\frac{\hat{P}_n+\hat{P}_n^\dagger}{2}\right)$ and $\Vert \hat{Q}_n\Vert_2$. Then, $\hat{Q}_n$ can be expressed as
\begin{equation*}
    \begin{aligned}
        \hat{Q}_n=
        \underbrace{\begin{bmatrix}
        I+\frac{\tilde{\lambda}}{2}L_h & O\\
        O & O
        \end{bmatrix}}_{\hat{Q}_n^{(1)}}+
        \underbrace{\begin{bmatrix}
        O & O\\
        -\frac{\varepsilon^2}{2h}\sqrt{\frac{\tau}{A_n}}M_h & \frac{\varepsilon^2}{\tau}(I+\frac{\tilde{\lambda}}{2}L_h)
        \end{bmatrix}}_{\hat{Q}_n^{(2)}}.
    \end{aligned}
\end{equation*}
Assume in addition that $A_n\ge a_*>0$ uniformly in $n$, with $a_*$ independent of $h$ and $\varepsilon$. By Lemma \ref{lemma:appendix:A:1}, we have $\Vert \hat{Q}_n\Vert_2\le \Vert \hat{Q}_n^{(1)}\Vert_2+\Vert \hat{Q}_n^{(2)}\Vert_2$. Using Lemma \ref{lemma:appendix:B:1} and $\tau=\lambda h$, one obtains
\begin{equation*}
    \begin{aligned}
        \Vert \hat{Q}_n^{(1)}\Vert_2&=\left\Vert I+\frac{\tilde{\lambda}}{2}L_h\right\Vert_2
        =1+\frac{\tilde{\lambda}}{2}\left(-2+2\cos\frac{\pi}{N_x+1}\right)\\
        &\lesssim 1-\frac{\tilde{\lambda}\pi^2}{2}h^2+\mathcal{O}(h^4),
    \end{aligned}
\end{equation*}
Moreover, since $\|L_h\|_2\le4$ and $\|M_h\|_2\le2$, there exists an constant independent of $h$ and $\varepsilon$, such that
\begin{equation*}
    \begin{aligned}
        \Vert \hat{Q}_n^{(2)}\Vert_2\le\frac{\varepsilon^2}{2h}\sqrt{\frac{\tau}{a_*}}\|M_h\|_2+\frac{\varepsilon^2}{\tau}\left\|I+\frac{\tilde{\lambda}}{2}L_h\right\|_2=\mathcal{O}\left(\frac{\varepsilon^2}{\tau}\right).
    \end{aligned}
\end{equation*}
Hence, we can obtain an upper bound for $\Vert \hat{Q}_n\Vert_2$ as follows:
\begin{equation}
    \begin{aligned}
        \label{equ:multiscale:7}
        \Vert \hat{Q}_n\Vert_2\le 1-\frac{\tilde{\lambda}\pi^2}{2}h^2+\mathcal{O}\left(\frac{\varepsilon^2}{\tau}\right)+\mathcal{O}(h^4).
    \end{aligned}
\end{equation}
For $\lambda_{\min}\left(\frac{\hat{P}_n+\hat{P}_n^\dagger}{2}\right)$, we use $M_h^\dagger=-M_h$ and the fact that the two off-diagonal coefficients of $\hat{P}_n$ coincide. Hence
\begin{equation}
    \begin{aligned}
        \label{equ:multiscale:8}
        \lambda_{\min}\left(\frac{\hat{P}_n+\hat{P}_n^\dagger}{2}\right)=\lambda_{\min}\left(\begin{bmatrix}
        I & O\\
        O & \left(1+\frac{\varepsilon^2}{\tau}\right) I
    \end{bmatrix}\right)\ge 1.
    \end{aligned}
\end{equation}
Therefore, applying Eq.~(\ref{equ:time:3}) together with Eqs. (\ref{equ:multiscale:7}) and (\ref{equ:multiscale:8}) yields
\begin{equation*}
    \begin{aligned}
        T_{\text{evol}}=\mathcal{O}\left(\frac{\log\delta^{-1}}{\frac{\tilde{\lambda}\pi^2}{2}h^2-\mathcal{O}\left(\frac{\varepsilon^2}{\tau}\right)+\mathcal{O}(h^4)}\right),
    \end{aligned}
\end{equation*}
In particular, if $\tau=\Theta(h^{2-1/\beta})$ (equivalently $\tilde{\lambda}=\Theta(1)$) and $\varepsilon=o(h^2)$, then $\frac{\varepsilon^2}{\tau}=o(h^{2+1/\beta})$, which is higher order than the leading $\mathcal{O}(h^2)$ term, and thus
\begin{equation*}
    \begin{aligned}
        T_{\text{evol}}=\mathcal{O}(\log\delta^{-1}\cdot N_x^2).
    \end{aligned}
\end{equation*}
\par For sparsity, the matrices $\hat{P}_n$ and $\hat{Q}_n$ have constant-order sparsity, i.e., $\mathcal{O}(1)$. Under the above normalization, the entries of $\hat{Q}_n$ and $\hat{b}_n$ are independent of $\varepsilon^{-1}$, while the largest entries of $\hat{P}_n$ are of order $\mathcal{O}(\sqrt{\tau}/h)$. Therefore, when $\tau=\Theta(h^{2-1/\beta})$,
\begin{equation*}
    \begin{aligned}
        \|\mathbf{H}_{\text{schr}}\|_{\max}=\mathcal{O}\left(\frac{\sqrt{\tau}}{h}\log\delta^{-1}\right)=\mathcal{O}(N_x^{1/(2\beta)}\log\delta^{-1}).
    \end{aligned}
\end{equation*}
The refined estimate in Appendix Section \ref{section:appendix:telegraph:complexity} shows that $\|\text{diag}(\mathbf{F})^\dagger u_{1,1}\|_2=\mathcal{O}(N_x^{-\frac{3}{2}})$, and hence the normalization parameter can be chosen as $K=\mathcal{O}(N_x^{\frac12})$. Since the auxiliary support indicator in Eq.~(\ref{equ:appendix:tele:1}) satisfies $\|\chi\|_2=\sqrt{4(N_t-1)+2N_x}=\mathcal{O}(N_x^{1-\frac{1}{2\beta}})$, while $\|\mathbf{u}(T)\|_2=\mathcal{O}(\sqrt{N_xN_t})=\mathcal{O}(N_x^{\frac32-\frac{1}{2\beta}})$ when $N_t\sim N_x^{2-1/\beta}$, the associated amplitude-amplification overhead remains $g_K=\mathcal{O}(1)$. Therefore, simulating the Hamiltonian to solve the multiscale telegraph equation in Eq.~(\ref{equ:multiscale:1}) requires $\mathcal{O}(N_x^{2+\frac{1}{2\beta}}\log N_x(\log\delta^{-1})^2)$ ($\mathcal{O}(N_x^{2+\frac{1}{2\beta}}(\log N_x)^3)$ if $N_x=\delta^{-1}$) queries in total in the regime $\varepsilon=o(h^2)$ with $\tau=\Theta(h^{2-1/\beta})$, and this bound remains \textit{independent of} $\varepsilon^{-1}$.

\subsubsection{Numerical Example}

\par Finally, we present numerical simulations for the multiscale telegraph equation in Eq.~(\ref{equ:multiscale:1}) to verify the feasibility of the proposed quantum IMEX scheme. The MATLAB scripts implement a support-compressed realization of the same $\chi$-based homogeneous extension analyzed in Appendix Section \ref{section:appendix:telegraph:complexity}: in the first $N_t-1$ time blocks they retain the boundary entries in both components, while in the last time block they retain the full $2N_x$ source block. In other words, the auxiliary sector is compressed exactly to the support of $\chi$. Following this implementation, the telegraph data at each time level are represented by the rescaled vector $\hat{w}_n=[u_n;\sqrt{\tau/A_n}\,v_n]$, where $A_n=a_n-\varepsilon^2$, and the physical flux is recovered from the second block by multiplying by $\sqrt{A_n/\tau}$.
\par We consider two values of the scaling parameter, namely $\varepsilon = 10^{-2}$ and $\varepsilon = 10^{-6}$, which correspond in the MATLAB code to the parameter choices $\varepsilon^2=10^{-4}$ and $\varepsilon^2=10^{-12}$, respectively. In both scripts we take $\beta=2$, $T=0.1$, 16 interior spatial unknowns, the coefficient profile $a(t)=0.5t+0.25$, and the normalization parameter $K=\sqrt{N_x}$, in agreement with the appendix estimate. The time step is chosen as $\Delta t = 0.5\,\Delta x^{3/2}$. The results in Fig.~\ref{fig:result:5} show that the recovered mass density $u$ and mass flux $v$ agree well with the classical discrete solution.

\begin{figure}[htbp]
    \centering
    \subfigure[The mass density $u$ for $\varepsilon=10^{-2}$]{\includegraphics[width=0.375\linewidth]{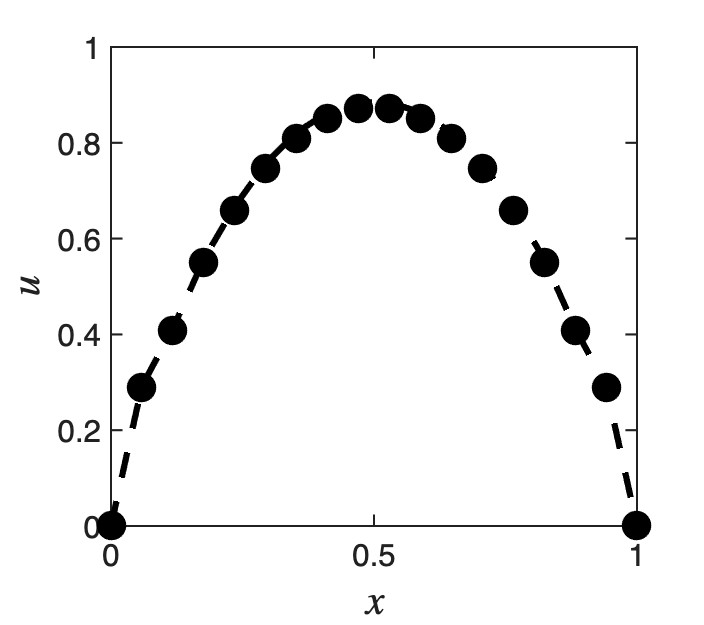}}\qquad
    \subfigure[The mass flux $v$ for $\varepsilon=10^{-2}$]{\includegraphics[width=0.375\linewidth]{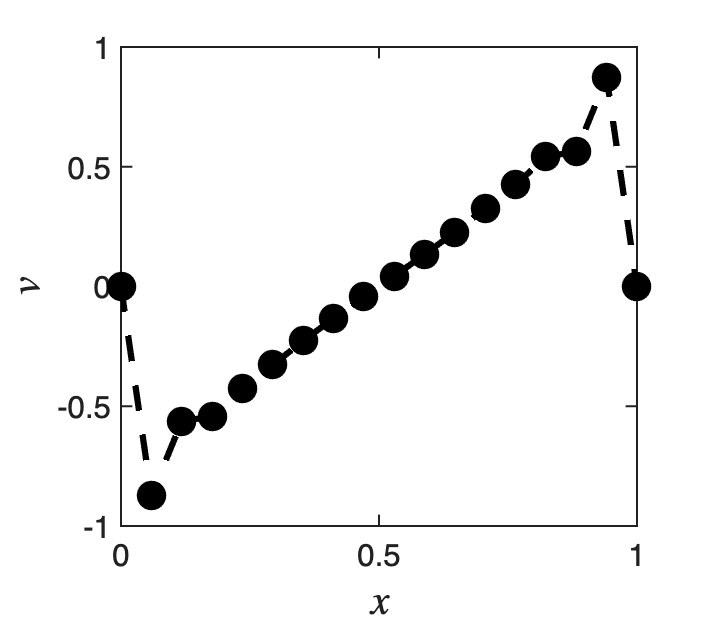}}\\
    \subfigure[The mass density $u$ for $\varepsilon=10^{-6}$]{\includegraphics[width=0.375\linewidth]{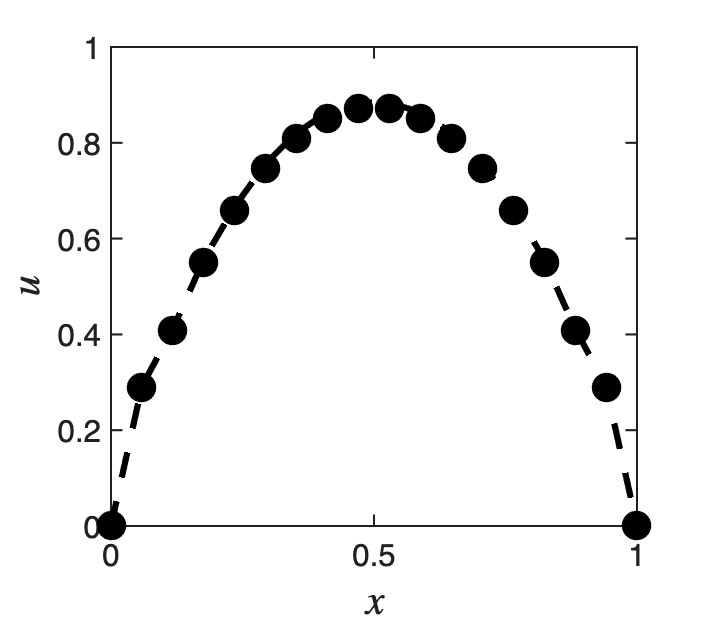}}\qquad
    \subfigure[The mass flux $v$ for $\varepsilon=10^{-6}$]{\includegraphics[width=0.375\linewidth]{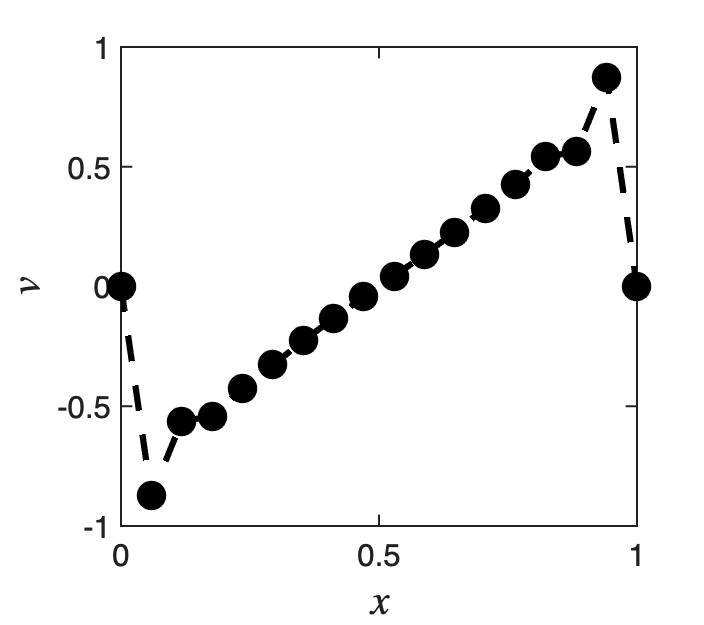}}
    \caption{Numerical results for Eq.~(\ref{equ:multiscale:1}) with $a(t)=0.5t+0.25$ and $\beta=2$ after Schr\"odingerization, shown at $t=0.1$. The MATLAB implementation uses 16 interior spatial unknowns, $\Delta t = 0.5\,\Delta x^{3/2}$, and a support-compressed realization of the $\chi$-based homogeneous extension with $K=\sqrt{N_x}$.}
    \label{fig:result:5}
\end{figure}

\section{Conclusions and Discussions}

\par In this paper, we present a quantum IMEX scheme for multiscale equations whose parameters are independent of the scaling parameter $\varepsilon$. Our approach is based on the Schr\"odingerization framework and uses an "ODE $\to$ Linear System $\to$ Equivalent ODE" procedure, which allows the evolution time $T_{\text{evol}}$ of the equivalent ODE to decouple from the original physical time $T$. This decoupling provides a practical way to handle stiff and multiscale problems.

\par Our framework also requires a careful determination of the evolution-time parameter $T_{\text{evol}}$. To address this issue, we analyze $T_{\text{evol}}$ through the matrix exponential $e^{-\mathbf{H}t}$ and discuss two complementary approaches. The first provides a practical estimate for time-dependent cases based on the logarithmic norm, while the second uses Laplace transforms and their inverses. Together, these analyses provide a workable basis for estimating $T_{\text{evol}}$.

\par Although our framework shows promise for multiscale problems in both time-dependent and time-independent settings, several avenues for further investigation remain. For example, the quantum IMEX framework could be extended to other numerical discretizations, such as IMEX Runge-Kutta methods \cite{Koto2008IMEXRK,Izzo2017RK} and IMEX multistep methods \cite{Hundsdorfer2007IMEX,Chaudhry2015IMEXMS}. Progress in this direction will require additional refinement of the proposed methods and may further broaden the range of Hamiltonian-simulation techniques for scientific computing.
\section*{Code Availability}
\par The code supporting the findings reported in the main text and the supplementary material will be made publicly available upon acceptance.

\section*{Declaration of competing interest}
The authors declare that they have no known competing financial interests or personal relationships that could have influenced the work reported in this paper.

\section*{Acknowledgement}
SJ acknowledges the support of the NSFC grant No. 12341104, the Shanghai Pilot Program for Basic Research, the Science and Technology Commission of Shanghai Municipality (STCSM) grant no. 24LZ1401200, the Shanghai Jiao Tong University 2030 Initiative, and the Fundamental Research Funds for the Central Universities.

\addcontentsline{toc}{section}{References}
\bibliographystyle{unsrt}
\bibliography{reference}
\appendix
\addcontentsline{toc}{section}{Appendix}
\section{Essential Lemmas and Conclusions}
\subsection{Lemma on Block Matrices}
\par To estimate the upper bound of $\Vert e^{-\mathbf{H}t}\Vert_2$, we present the following useful theorem and its proof.
\begin{lemma}
\label{lemma:appendix:A:1}
\par For a block matrix with $m$ blocks and the following structure:
\begin{equation*}
    \begin{aligned}
        A=\begin{bmatrix}
        A_{11} & A_{12} & \cdots & A_{1m} \\
        A_{21} & A_{22} & \cdots & A_{2m} \\
        \vdots & \vdots & \ddots & \vdots \\
        A_{m1} & A_{m2} & \cdots & A_{mm}
        \end{bmatrix},
    \end{aligned}
\end{equation*}
where all $A_{ij}$ are square matrices of the same size. Then the operator 2-norm of this matrix satisfies:
\begin{equation}
    \begin{aligned}
        \label{equ:appendix:A:1}
        \Vert A\Vert_2\le\sum\limits_{k=-(m-1)}^{m-1}\max_{j-i=k}\Vert A_{ij}\Vert_2.
    \end{aligned}
\end{equation}
\begin{proof}
\par First, we decompose $A$ and let $H_k$ be the matrix composed of blocks $A_{i,j}$ that satisfy $j-i=k$, i.e.
\begin{equation*}
    \begin{aligned}
        A&=
        \underbrace{\begin{bmatrix}
        A_{1,1} &&&\\
        & A_{2,2} &&\\
        && \ddots &\\
        &&& A_{m,m}
        \end{bmatrix}}_{H_0}
        +\underbrace{\begin{bmatrix}
        O & A_{1,2} &&\\
        & O & \ddots &\\
        && \ddots & A_{m-1,m} \\
        &&& O
        \end{bmatrix}}_{H_1}
        +\cdots+\underbrace{\begin{bmatrix}
        O & \cdots && A_{1m} \\
        & O & \ddots &\\
        && \ddots & \vdots\\
        &&& O
        \end{bmatrix}}_{H_{m-1}}\\
        &\quad+\underbrace{\begin{bmatrix}
        O &&&\\
        A_{2,1} & O &  &\\
        & \ddots & \ddots &\\
        && A_{m,m-1} & O
        \end{bmatrix}}_{H_{-1}}
        +\cdots+\underbrace{\begin{bmatrix}
        O &&&\\
        \vdots & O & &\\
        & \ddots & \ddots &\\
        A_{m1} && \cdots & O
        \end{bmatrix}}_{H_{-(m-1)}}.
    \end{aligned}
\end{equation*}
First, we can use the triangle inequality for the 2-norm to obtain
\begin{equation*}
    \begin{aligned}
        \Vert A\Vert_2\le\sum\limits_{k=-(m-1)}^{m-1}\Vert H_k\Vert_2,
    \end{aligned}
\end{equation*}
and using the definition of the 2-norm, we can obtain
\begin{equation*}
    \begin{aligned}
        \Vert H_k\Vert_2\le\max_{j-i=k}\Vert A_{ij}\Vert_2.
    \end{aligned}
\end{equation*}
Combining these two inequalities completes the proof.
\end{proof}
\end{lemma}
\par The following theorem is a standard eigenvalue inequality, and we present its statement without proof.
\subsection{Lemmas on Eigenvalues for Specific Matrix}
\par We provide the following result without proof.
\begin{lemma}
\label{lemma:appendix:B:1}
\par The specific matrices $L_h$ and $M_h$ satisfy the following properties.
\begin{itemize}
    \item For $L_h$, the eigenvalues of $L_h$ are $-2+2\cos\left(\frac{\pi k}{N_x+1}\right)$, $k=1,\dots,N_x$. 
    The maximum eigenvalue is $-2+2\cos\left(\frac{\pi}{N_x+1}\right)$ and is less than 0. The minimum eigenvalue is $-2+2\cos\left(\frac{N_x\pi}{N_x+1}\right)$
    and is greater than $-4$.
    \item 
    For $M_h$, one can compute $M_h^2$ as
    \begin{equation*}
        \begin{aligned}
            M_h^2 =
            \begin{bmatrix}
            -1 & 0 & 1 &&\\
            0 & -2 & 0 &&\\
            1 & 0 & -2 && \ddots \\
            &&& \ddots && 1 \\
            && \ddots & & -2 & 0\\
            &&& 1 & 0 & -1
            \end{bmatrix}.
        \end{aligned}
    \end{equation*}
    The eigenvalues of $M_h$ are $-2i\cos\left(\frac{\pi k}{N_x+1}\right)$, $k=1,\dots,N_x$. Consequently, the eigenvalues of $M_h^2$ are $-4\cos^2\left(\frac{\pi k}{N_x+1}\right)$, $k=1,\dots,N_x$, 
    where the maximum eigenvalue of $M_h^2$ is less than or equal to 0, while the minimum eigenvalue is greater than $-4$.
\end{itemize}
\qed
\end{lemma}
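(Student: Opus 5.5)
The statement to prove is Lemma~\ref{lemma:appendix:B:1}: the spectral facts for the tridiagonal Toeplitz matrices $L_h$ and $M_h$. The plan is to diagonalize both matrices explicitly using the discrete sine basis. For $k=1,\dots,N_x$ let $s^{(k)}_j=\sin\frac{\pi jk}{N_x+1}$, $j=1,\dots,N_x$, and set $\theta_k=\frac{\pi k}{N_x+1}$. The boundary convention $s^{(k)}_0=s^{(k)}_{N_x+1}=0$ lets every row be treated uniformly.

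\textbf{Step 1: eigenpairs of $L_h$.} Apply the identity $\sin((j-1)\theta)+\sin((j+1)\theta)=2\cos\theta\,\sin(j\theta)$. This gives $(L_hs^{(k)})_j=(-2+2\cos\theta_k)s^{(k)}_j$. The vectors $s^{(k)}$ are nonzero and pairwise orthogonal, so this yields all $N_x$ eigenvalues. Since $\theta_k\in(0,\pi)$, the map $k\mapsto\cos\theta_k$ is strictly decreasing. Hence the maximum eigenvalue is $-2+2\cos\frac{\pi}{N_x+1}<0$, because $\cos\theta_1<1$. The minimum eigenvalue is $-2+2\cos\frac{N_x\pi}{N_x+1}>-4$, because $\cos\theta_{N_x}>-1$.

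\textbf{Step 2: eigenpairs of $M_h$.} $M_h$ is real antisymmetric, so its eigenvalues are purely imaginary. Use the diagonal similarity $D=\mathrm{diag}(i^{j})$. Then $(D^{-1}M_hD)_{j,j+1}=i$ and $(D^{-1}M_hD)_{j+1,j}=-i^{-1}=i$. Therefore $D^{-1}M_hD=i\,T$, where $T$ is the symmetric tridiagonal matrix with zero diagonal and ones off the diagonal. By the same sine identity as in Step 1, $T$ has eigenvalues $2\cos\theta_k$. So $M_h$ has eigenvalues $2i\cos\theta_k$. Under $k\mapsto N_x+1-k$ we have $\cos\theta_k\mapsto-\cos\theta_k$, so this multiset coincides with $\{-2i\cos\theta_k\}$, which is the form stated.

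\textbf{Step 3: $M_h^2$.} The displayed form of $M_h^2$ follows from a direct row-times-column computation. The sums $\sum_l (M_h)_{jl}(M_h)_{lj}=-(\#\text{neighbours of }j)$ give the diagonal entries $-1$ at the ends and $-2$ in the interior. The entries at distance two come from $(+1)(+1)$ or $(-1)(-1)$ and equal $1$, while entries at distance one vanish. The eigenvalues of $M_h^2$ are the squares of those of $M_h$, namely $-4\cos^2\theta_k$. These satisfy $-4\cos^2\theta_k\le 0$, with equality when $\theta_k=\pi/2$, i.e.\ $N_x$ odd. They are $>-4$ since $|\cos\theta_k|<1$ for $\theta_k\in(0,\pi)$.

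There is no real obstacle here. The only step needing care is the sign bookkeeping in the similarity of Step 2, together with checking that the boundary rows are consistent with the zero-extension convention.
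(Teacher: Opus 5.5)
Your argument is correct. The paper gives no proof to compare against: it states this lemma explicitly ``without proof.'' What you supply is the standard verification:
\begin{itemize}
\item the discrete sine vectors $s^{(k)}$ diagonalize $L_h$;
\item the similarity $D=\mathrm{diag}(i^{j})$ turns $M_h$ into $i$ times the symmetric path matrix, which is equivalent to the tridiagonal Toeplitz formula $a+2\sqrt{bc}\cos\theta_k$ with $a=0$, $bc=-1$;
\item spectral mapping then gives the eigenvalues of $M_h^2$.
\end{itemize}
Your sign bookkeeping in $(D^{-1}M_hD)_{j+1,j}=i$ is right, and so is the symmetry $k\mapsto N_x+1-k$ reconciling $2i\cos\theta_k$ with the stated $-2i\cos\theta_k$.

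One small remark concerns $M_h^2$. Your entrywise description is the accurate one: diagonal $-1,-2,\dots,-2,-1$, zero first off-diagonals, and ones on the second off-diagonals. The paper's displayed matrix is only schematic, and read literally it omits, for example, the entry $(M_h^2)_{2,4}=1$.
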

\subsection{Weyl Theorem}
\begin{theorem}
\label{lemma:appendix:B:2}
\par \textbf{(Weyl Inequality).} Let $A$ and $B$ be $n\times n$ Hermitian matrices, and let $C=A+B$. Denote the eigenvalues of $A$, $B$, and $C$ in non-decreasing order as: $\lambda_1(A)\le\cdots\le\lambda_n(A)$, $\lambda_1(B)\le\cdots\le\lambda_n(B)$, and $\lambda_1(C) \le\cdots\le\lambda_n(C)$. Then, for each $k=1,\cdots,n$, the following inequalities hold:
\begin{equation*}
    \begin{aligned}
        \lambda_k(A)+\lambda_1(B)\le\lambda_k(C)\le\lambda_k(A)+\lambda_n(B).
    \end{aligned}
\end{equation*}
\qed
\end{theorem}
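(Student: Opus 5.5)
We would prove the Weyl inequality from the Courant--Fischer min--max characterization of eigenvalues of Hermitian matrices. With the eigenvalues in non-decreasing order, for any Hermitian $M$ of size $n\times n$ we have
\begin{equation*}
    \lambda_k(M)=\min_{\substack{V\subseteq\mathbb{C}^n\\ \dim V=k}}\ \max_{\substack{x\in V\\ \|x\|_2=1}} x^\dagger M x .
\end{equation*}
We take this characterization as standard. If a self-contained argument is wanted, it follows from diagonalizing $M$ in an orthonormal eigenbasis: for any $k$-dimensional $V$, a dimension count shows that $V$ meets the span of the top $n-k+1$ eigenvectors, and the span of the first $k$ eigenvectors attains the minimum.

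The key pointwise observation is the Rayleigh-quotient bound for the perturbation. For every unit vector $x$, the spectral decomposition of $B$ gives
\begin{equation*}
    \lambda_1(B)\le x^\dagger B x\le \lambda_n(B).
\end{equation*}
Hence, for every unit vector $x$,
\begin{equation*}
    x^\dagger A x+\lambda_1(B)\le x^\dagger C x\le x^\dagger A x+\lambda_n(B).
\end{equation*}

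Next we pass these inequalities through the min--max.
\begin{itemize}
    \item Fix a $k$-dimensional subspace $V$. Taking the maximum over unit $x\in V$ preserves both inequalities, since adding a constant commutes with taking a maximum.
    \item Taking the minimum over all such $V$ then also preserves them, for the same reason.
\end{itemize}
This yields $\lambda_k(A)+\lambda_1(B)\le\lambda_k(C)\le\lambda_k(A)+\lambda_n(B)$ directly. Alternatively, one can prove only the upper bound and obtain the lower bound by applying it to $A=C+(-B)$, using $\lambda_n(-B)=-\lambda_1(B)$.

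No step is a genuine obstacle. The only point needing care is the monotonicity of the min--max functional under the pointwise comparison $x^\dagger Cx\le x^\dagger Ax+c$, together with the fact that the additive constant $c$ pulls out of both the max and the min. If Courant--Fischer is not to be assumed, the main work shifts to proving it via the dimension-intersection argument $\dim V+\dim W>n\Rightarrow V\cap W\neq\{0\}$.
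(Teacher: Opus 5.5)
The paper gives no proof of this statement: it calls Weyl's inequality a standard fact, says it is presented ``without proof'', and closes it with \qed, so there is no argument in the paper to compare against. Your Courant--Fischer proof is correct and is the standard textbook one:
\begin{itemize}
\item the pointwise bound $\lambda_1(B)\le x^\dagger Bx\le\lambda_n(B)$, together with monotonicity of the min--max functional (constants pull out of both the max and the min), gives both sides at once;
\item your dimension-count sketch of Courant--Fischer, intersecting a $k$-dimensional $V$ with the span of the eigenvectors for $\lambda_k,\dots,\lambda_n$, is right;
\item deriving the lower bound from $A=C+(-B)$ with $\lambda_n(-B)=-\lambda_1(B)$ is also valid.
\end{itemize}

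Your route makes the result self-contained for every index $k$. Note, however, that the paper only uses the extreme cases.
\begin{itemize}
\item It uses $k=1$ in Eq.~(\ref{equ:time:2}). There $A$ is the block diagonal with blocks $\frac{P_j+P_j^\dagger}{2}$ and $B$ collects the off-diagonal $Q$-blocks, so $\lambda_1(B)\ge-\|B\|_2\ge-\max_j\|Q_j\|_2$ via Lemma~\ref{lemma:appendix:A:1}.
\item It uses $k=n$ for the bound on $\lambda_{\max}(L_{h,d})$ in the heat-equation section.
\end{itemize}
For these two cases your argument reduces to the Rayleigh-quotient characterization alone, e.g.\ $\min_{\|x\|=1}(x^\dagger Ax+x^\dagger Bx)\ge\min_{\|x\|=1}x^\dagger Ax+\min_{\|x\|=1}x^\dagger Bx$. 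No min--max over subspaces is needed there; Courant--Fischer is only required for the interior indices $1<k<n$.
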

\subsection{Logarithmic Norm}
\par In particular, we provide an exact expression for $\mu(A)$ under the $2$-norm.
\begin{lemma}
\label{lemma:appendix:D:1}
\par Let $A_1=\frac{A+A^\dagger}{2}$. Then we have
\begin{equation}
    \begin{aligned}
        \label{equ:appendix:C:1}
        \mu(A)=\lambda_{\max}(A_1),
    \end{aligned}
\end{equation}
in which $\mu(A)$ is defined in Lemma \ref{lemma:lognorm}
\begin{proof}
\par To gain deeper insight into $\mu(A)$, we analyze its relationship with the Hermitian part of the matrix. Using the definition of the $2$-norm and assuming $h$ is small, we can ignore the $h^2$ term, yielding:
\begin{equation}
    \begin{aligned}
        \label{equ:appendix:C:3}
        \Vert I+hA\Vert_2^2
        &=\lambda_{\max}(I+h(A+A^\dagger)+h^2A^\dagger A)\\
        &=\left(1+h\lambda_{\max}(A_1)\right)^2+o(h^2).
    \end{aligned}
\end{equation}
Hence, by applying the definition of $\mu(A)$ given in Lemma \ref{lemma:lognorm} to Eq.~(\ref{equ:appendix:C:3}), we derive
\begin{equation*}
    \begin{aligned}
        \mu(A)=\lambda_{\max}\left(A_1\right),
    \end{aligned}
\end{equation*}
in which $A_1=\frac{A+A^\dagger}{2}$. This completes the proof.
\end{proof}
\end{lemma}
\section{Detailed Query Complexity Analysis for Improved Quantum IMEX Schemes}
\label{section:B}
\subsection{Structure of\texorpdfstring{ $e^{-\mathbf{H}t}$}{} Based on the Laplace Transform}
\label{section:laplacetransform}
\par In this section, we use the Laplace transform and its inverse \cite{Hu2024FundamentalPO} to analyze this matrix exponential.
\begin{remark}
\par For a constant matrix $A$, the Laplace transform of the matrix exponential $e^{At}$ is given by:
\begin{equation*}
    \begin{aligned}
        \mathcal{L}[e^{At}](s)=(sI-A)^{-1}.
    \end{aligned}
\end{equation*}
\qed
\end{remark}
\par Thus, our focus is on computing $(sI + \mathbf{H})^{-1}$. Since $\mathbf{H}$ is block upper triangular, we first use the following decomposition to simplify the calculation. Define $A=Q(sI+P)^{-1}$; then
\begin{equation*}
    \begin{aligned}
        (sI+\mathbf{H})^{-1}=
        \begin{bmatrix}
        (sI+P)^{-1}\\
        & (sI+P)^{-1}\\
        && \ddots\\
        &&& (sI+P)^{-1}
        \end{bmatrix}
        \begin{bmatrix}
        I & A & \cdots & A^{N_t-1} \\
        & I & \cdots & A^{N_t-2}\\
        && \ddots & \vdots\\
        &&& I
        \end{bmatrix}.
    \end{aligned}
\end{equation*}
By applying the inverse Laplace transform and noting that $(s+1)^{-(j+1)}A^j$ corresponds to $\frac{t^j}{j!}e^{-t}A^j$, we obtain the explicit expression for $e^{-\mathbf{H}t}$ as:
\begin{equation}
    \begin{aligned}
        \label{equ:laplace:appendix:1}
        e^{-\mathbf{H}t}=
        \left[\begin{array}{cccc}
        \exp(-Pt) & B_{1,2}(t) & \cdots & B_{1,N_t}(t) \\
        & \exp(-Pt) & \cdots & B_{2,N_t}(t) \\
        && \ddots & \vdots \\
        &&& \exp(-Pt) \\
        \end{array}\right],
    \end{aligned}
\end{equation}
in which the block matrix $B_{i,\ell}$ is defined as follows:
\begin{equation*}
    \begin{aligned}
        B_{i,\ell}(t)
        =\mathcal{L}^{-1}\left[(sI+P)^{-1}\cdot\left(Q\cdot(sI+P)^{-1}\right)^{(\ell-i)}\right](t),
        \ \text{where}\ i<\ell.
\end{aligned}
\end{equation*}
Using the convolution property of the Laplace transform, $B_{i,\ell}(t)$ satisfies the following iterative relationship:
\begin{equation}
    \begin{aligned}
        \label{equ:laplace:2}
        B_{i,\ell}(t)&=\int_0^t\exp(-P\tau)\cdot Q\cdot B_{i,\ell-1}(t-\tau)\d\tau,\\
        B_{i,i}(t)&=\exp(-Pt).
    \end{aligned}
\end{equation}
\subsection{Upper Bound for\texorpdfstring{ $\Vert e^{-\mathbf{H}t}\Vert$}{}}
\par We can solve the iterative relation in Eq.~(\ref{equ:laplace:2}) directly without imposing additional conditions such as the commutativity of $P$ and $Q$. We use mathematical induction to prove that $B_{i,i+k}(t) = e^{-Pt}\cdot \frac{1}{k!} \mathcal{T} \Psi(t)^k$, where $\mathcal{T}$ denotes the time-ordering operator, $\Psi(t) = \int_0^t \tilde{Q}(\tau) d\tau$, and $\tilde{Q}(t)=e^{Pt} Q e^{-Pt}$. The result is immediate for $k=0$. Assuming it holds for $k$, we obtain for $k+1$:
\begin{equation}
    \begin{aligned}
        \label{equ:norm:2}
        B_{i,i+k+1}(t)&=\int_0^te^{-(t-\tau)P}Qe^{-\tau P}\cdot \frac{1}{k!} \mathcal{T}\left(\int_0^\tau \tilde{Q}(s_1)\d s_1\right)\cdots\left(\int_0^{\tau}\tilde{Q}(s_k)\d s_k\right)d\tau\\
        &=e^{-Pt}\cdot \frac{1}{k!}\int_0^t\tilde{Q}(\tau)\cdot \mathcal{T}\left(\int_0^\tau \tilde{Q}(s_1)\d s_1\right)\cdots\left(\int_0^{\tau}\tilde{Q}(s_k)\d s_k\right)d\tau\\
        &=e^{-Pt}\cdot \frac{1}{(k+1)!} \mathcal{T}\left(\int_0^t \tilde{Q}(s_1)\d s_1\right)\cdots\left(\int_0^{t}\tilde{Q}(s_k)\d s_k\right)\left(\int_0^t\tilde{Q}(\tau)d\tau\right)\\
        &=e^{-Pt}\cdot \frac{1}{(k+1)!} \mathcal{T}\Psi(t)^{k+1}.
    \end{aligned}
\end{equation}
Substituting Eq.~(\ref{equ:norm:2}) into Lemma \ref{lemma:appendix:A:1} gives the following result regarding the upper bound of $\Vert e^{-\mathbf{H}t}\Vert_2$:
\begin{equation*}
    \begin{aligned}
        \Vert e^{-\mathbf{H}t}\Vert_2\le \Vert e^{-Pt}\Vert_2\sum\limits_{i=0}^{N_t-1}\frac{\left(\int_0^t \Vert\tilde{Q}(\tau) \Vert_2\d\tau\right)^i}{i!}.
    \end{aligned}
\end{equation*}
One can see that the upper bound of $\Vert e^{-\mathbf{H}t}\Vert_2$ consists of a matrix exponential and a Taylor expansion. Moreover, this expansion can be bounded above by an exponential function. Therefore, we obtain the following simplified upper bound:
\begin{equation}
    \begin{aligned}
        \label{equ:norm:3}
        \Vert e^{-\mathbf{H}t}\Vert_2\le \Vert e^{-Pt+\int_0^t \Vert\tilde{Q}(\tau) \Vert_2\d\tau}\Vert_2\approx\Vert e^{\tau Lt}\Vert_2.
    \end{aligned}
\end{equation}
\noindent The above analysis yields an upper bound for $\Vert e^{-\mathbf{H}t}\Vert_2$ without requiring additional assumptions on $\frac{L+L^{\dagger}}{2}$.
\par These conclusions can be illustrated through a simple example. For Eq.~(\ref{equ:norm:3}), consider $P = I + A + \frac{iB}{\varepsilon}$ and $Q = I$, where $B$ is Hermitian and $\varepsilon \to 0$. Then, using the Trotter formula, one obtains
\begin{equation*}
    \begin{aligned}
        \Vert e^{(-A-\frac{iB}{\varepsilon})t}\Vert_2\le\lim\limits_{n\to\infty}\Vert e^{\frac{-At}{n}}\Vert ^n\Vert e^{-\frac{iB}{n\varepsilon}t}\Vert ^n=\lim\limits_{n\to\infty}\Vert e^{\frac{-At}{n}}\Vert ^n,
    \end{aligned}
\end{equation*}
which indicates that the term involving $\varepsilon^{-1}$ may become negligible in this scaling regime, helping explain the treatment of such multiscale problems, including the highly oscillatory problems considered by Gu \textit{et al.} \cite{Gu2025HighlyOscillatory}.

\section{Detailed Discussion on the Multiscale Telegraph Equation}
\label{section:appendix:telegraph}
\subsection{Vanishing Artificial Dissipation and the Role of \texorpdfstring{$\beta$}{beta}}
\label{section:appendix:telegraph:dissipative}

\par In Eq.~(\ref{equ:multiscale:3}), the parameter $\beta$ appears only through the artificial-dissipation coefficient $\nu_h:=\frac{h^{1/\beta}}{2}$. We now show that this parameter does not alter the final continuum limit. To this end, we introduce the modified dissipative system
\begin{equation}
    \begin{aligned}
        \label{equ:appendix:dissipative:1}
        \partial_tu^{\beta,h}(t,x)+\partial_xv^{\beta,h}(t,x)-\nu_h\partial_{xx}u^{\beta,h}(t,x)&=0,\\
        \partial_tv^{\beta,h}(t,x)+\partial_xu^{\beta,h}(t,x)-\nu_h\partial_{xx}v^{\beta,h}(t,x)&=-\frac{1}{\varepsilon^2}\left(v^{\beta,h}(t,x)+(a(t)-\varepsilon^2)\partial_xu^{\beta,h}(t,x)\right),
    \end{aligned}
\end{equation}
which differs from Eq.~(\ref{equ:multiscale:2}) only through the vanishing-viscosity term $\nu_h\partial_{xx}$. The IMEX scheme in Eq.~(\ref{equ:multiscale:3}) is precisely the first-order-in-time, second-order-in-space discretization of Eq.~(\ref{equ:appendix:dissipative:1}). Hence, if $(u^{\beta,h},v^{\beta,h})$ is sufficiently smooth and the step-size condition $\tilde{\lambda}<1$ holds, then the local truncation error $\mathcal{T}_n$ of Eq.~(\ref{equ:multiscale:3}) relative to Eq.~(\ref{equ:appendix:dissipative:1}) satisfies $\max_{0\le n\le N_t-1}\|\mathcal{T}_n\|_2\le C\left(\tau^2+\tau h^2\right)$, where $C$ depends on the regularity norms of the solution but is independent of $\varepsilon$, $h$, and $\tau$, and a standard discrete Gronwall argument yields the global estimate
\begin{equation}
    \begin{aligned}
        \label{equ:appendix:dissipative:2}
        \max_{0\le n\le N_t}\left(\|u_n-u^{\beta,h}(t_n)\|_2+\varepsilon\|v_n-v^{\beta,h}(t_n)\|_2\right)\le C\left(\tau+h^2\right).
    \end{aligned}
\end{equation}

\par Let $(u,v)$ denote the solution of the original telegraph system in Eq.~(\ref{equ:multiscale:1}), or equivalently Eq.~(\ref{equ:multiscale:2}), under the same initial and boundary data. Define the differences $r:=u^{\beta,h}-u$ and $s:=v^{\beta,h}-v$. Subtracting Eq.~(\ref{equ:multiscale:2}) from Eq.~(\ref{equ:appendix:dissipative:1}) yields
\begin{equation}
    \begin{aligned}
        \label{equ:appendix:dissipative:3}
        \partial_t r+\partial_x s&=\nu_h\partial_{xx}u^{\beta,h},\\
        \partial_t s+\partial_x r&=-\frac{1}{\varepsilon^2}\left(s+(a(t)-\varepsilon^2)\partial_x r\right)+\nu_h\partial_{xx}v^{\beta,h}.
    \end{aligned}
\end{equation}
Assume that $a(t)$ is bounded on $[0,T]$, that the boundary data are compatible, and that the solutions remain uniformly bounded in $H^2$ on $[0,T]$. Multiplying the two equations in Eq.~(\ref{equ:appendix:dissipative:3}) by $r$ and $\varepsilon^2 s$, respectively, integrating over space, and using Young's inequality, one obtains the energy estimate
\begin{equation}
    \begin{aligned}
        \label{equ:appendix:dissipative:5}
        \frac{d}{dt}\left(\|r(t)\|_2^2+\varepsilon^2\|s(t)\|_2^2\right)+\|s(t)\|_2^2\le C\left(\|r(t)\|_2^2+\varepsilon^2\|s(t)\|_2^2\right)+C\nu_h^2,
    \end{aligned}
\end{equation}
where $C$ depends on $T$, the coefficient bounds, and the regularity norms of the solutions, but is independent of $h$ and $\tau$. Gronwall's inequality then yields
\begin{equation}
    \begin{aligned}
        \label{equ:appendix:dissipative:4}
        \sup_{t\in[0,T]}\left(\|u^{\beta,h}(t)-u(t)\|_2+\varepsilon\|v^{\beta,h}(t)-v(t)\|_2\right)\le C\nu_h=C h^{1/\beta}.
    \end{aligned}
\end{equation}
Combining Eqs. (\ref{equ:appendix:dissipative:2}) and (\ref{equ:appendix:dissipative:4}) yields the total error bound
\begin{equation}
    \begin{aligned}
        \label{equ:appendix:dissipative:6}
        \max_{0\le n\le N_t}\left(\|u_n-u(t_n)\|_2+\varepsilon\|v_n-v(t_n)\|_2\right)\le C\left(\tau+h^2+h^{1/\beta}\right).
    \end{aligned}
\end{equation}
Consequently, for every fixed $\beta\in[1,+\infty)$, the artificial dissipation introduced in Eq.~(\ref{equ:multiscale:3}) vanishes as $h\to0$, and the scheme converges to the same solution of the original telegraph equation in Eq.~(\ref{equ:multiscale:1}). The parameter $\beta$ changes only the strength of the pre-asymptotic regularization and the complexity scaling through the choice of $\tau=\Theta(h^{2-1/\beta})$; it does not affect the final continuum limit.

\subsection{Query Complexity Analysis for the Multiscale Telegraph Equation}
\label{section:appendix:telegraph:complexity}

\par In this appendix, we explain why the homogeneous matrix used in the Schr\"odingerization step can still be normalized so that the relevant eigenvalue of its Hermitian part stays bounded from below by a positive quantity that does not deteriorate as $\varepsilon\to0$. For this purpose, we introduce an explicit scaling factor $K$ in the source block and consider
\begin{equation}
    \begin{aligned}
        \label{equ:appendix:tele:1}
        \mathbf{H}_{\text{homo}}(K)&=
        \begin{bmatrix}
            \mathbf{H} & -\text{diag}(\mathbf{F})/K\\
            O & O
        \end{bmatrix},\qquad
        \mathbf{u}_{\text{homo}}(t;K)=
        \begin{bmatrix}
            \mathbf{u}(t)\\
            K\chi
        \end{bmatrix},\\
        &\mathbf{H}_{\text{homo,1}}:=\frac{\mathbf{H}_{\text{homo}}(K)+\mathbf{H}_{\text{homo}}(K)^\dagger}{2}.
    \end{aligned}
\end{equation}
where $\chi:=(\mathbf{1}_{N_t}-e_{N_t})\otimes\begin{bmatrix}e_1+e_{N_x}\\ e_1+e_{N_x}\end{bmatrix}+e_{N_t}\otimes\begin{bmatrix}\mathbf{1}_{N_x}\\ \mathbf{1}_{N_x}\end{bmatrix}$. Here $\mathbf{H}$ and $\mathbf{F}$ are generated by the time-discrete linear system, while $\chi$ records the auxiliary support used in the homogeneous extension: the first $N_t-1$ time blocks keep the boundary entries of both components, and the last block keeps the full source block. Because the physical mode $u_{1,1}$ has vanishing lower component, only the upper density component enters the coupling estimate below even though both components are retained in $\chi$. In particular, $\chi$ is a $\{0,1\}$-valued support indicator and satisfies $\|\chi\|_2^2=4(N_t-1)+2N_x$. The goal is to choose $K$ so that the nontrivial smallest eigenvalue branch of $\mathbf{H}_{\text{homo,1}}$ does not degenerate when $\varepsilon\to0$.

\par Set $A_n=a_n-\varepsilon^2$ and assume that $A_n\ge a_*>0$ uniformly in $n$, where $a_*$ is independent of $h$ and $\varepsilon$. We only keep the leading-order contribution to the Hermitian part in the limit $\varepsilon\to0$. For the rescaled matrices in Eq.~(\ref{equ:multiscale:6}), the skew-symmetry $M_h^\dagger=-M_h$ implies that the off-diagonal terms of $\hat{P}_n$ cancel in the Hermitian part, so
\begin{equation*}
    \begin{aligned}
        \frac{\hat{P}_n+\hat{P}_n^\dagger}{2}=\begin{bmatrix}
            I & O\\
            O & \left(1+\frac{\varepsilon^2}{\tau}\right)I
        \end{bmatrix}=I_{2N_x}+\mathcal{O}\left(\frac{\varepsilon^2}{\tau}\right).
    \end{aligned}
\end{equation*}
Moreover, $\hat{Q}_n$ can be written as a leading operator independent of $a_n$ plus a small remainder. With $\tilde{\lambda}=\tau/h^{2-1/\beta}$, this reads
\begin{equation}
    \begin{aligned}
        \label{equ:appendix:tele:2}
        \hat{Q}_n=\hat{Q}+R_n,\qquad
        \hat{Q}:=\begin{bmatrix}
            I_{N_x}+\frac{\tilde{\lambda}}{2}L_h & O\\
            O & O
        \end{bmatrix},\qquad
        \|R_n\|_2=\mathcal{O}\left(\frac{\varepsilon^2}{\tau}\right),
    \end{aligned}
\end{equation}
where, for simplicity of notation, $N_x$ denotes the size of the reduced spatial block in the limiting operator. Since the same reduced block appears at every time level, the Hermitian part of the corresponding block matrix can be written as
\begin{equation}
    \begin{aligned}
        \label{equ:appendix:tele:3}
        \widetilde{\mathbf{H}}_1:=\frac{\mathbf{H}+\mathbf{H}^\dagger}{2}=I_{N_t}\otimes I_{2N_x}-\frac{1}{2}\mathsf{P}_{N_t}\otimes \hat{Q}+\mathbf{R}_{\varepsilon,h},\qquad \|\mathbf{R}_{\varepsilon,h}\|_2=\mathcal{O}\left(\frac{\varepsilon^2}{\tau}\right),
    \end{aligned}
\end{equation}
in which $\mathsf{P}_{N_t}$ is the adjacency matrix of the path graph. The matrix $I_{N_t}\otimes I_{2N_x}-\frac{1}{2}\mathsf{P}_{N_t}\otimes \hat{Q}$ is real symmetric, so its spectrum can be computed explicitly from the tensor-product structure, while $\mathbf{R}_{\varepsilon,h}$ can be handled perturbatively.

\par The eigenpairs $\mathsf{P}_{N_t}\xi^{(r)}=\theta_r\xi^{(r)}$ of $\mathsf{P}_{N_t}$ are given by $\theta_r=2\cos\frac{r\pi}{N_t+1}$ and $\xi^{(r)}_j=\sqrt{\frac{2}{N_t+1}}\sin\frac{jr\pi}{N_t+1}$, where $r=1,\cdots,N_t$ and $j=1,\cdots,N_t$. On the other hand, by Lemma \ref{lemma:appendix:B:1}, the eigenpairs $L_h\zeta^{(s)}=\ell_s\zeta^{(s)}$ of $L_h$ are
$\ell_s=-2+2\cos\frac{s\pi}{N_x+1}$ and $\zeta^{(s)}_m=\sqrt{\frac{2}{N_x+1}}\sin\frac{ms\pi}{N_x+1}$, where $s=1,\cdots,N_x$ and $m=1,\cdots,N_x$. Therefore,
\begin{equation}
    \begin{aligned}
        \label{equ:appendix:tele:4}
        \hat{Q}\begin{bmatrix}
            \zeta^{(s)}\\
            0
        \end{bmatrix}=q_s\begin{bmatrix}
            \zeta^{(s)}\\
            0
        \end{bmatrix},\qquad
        q_s=1+\frac{\tilde{\lambda}}{2}\ell_s=1-\tilde{\lambda}\left(1-\cos\frac{s\pi}{N_x+1}\right).
    \end{aligned}
\end{equation}
Combining Eqs. (\ref{equ:appendix:tele:3}) and (\ref{equ:appendix:tele:4}), one sees that the vectors
\begin{equation*}
    \begin{aligned}
        u_{r,s}:=\xi^{(r)}\otimes\begin{bmatrix}
            \zeta^{(s)}\\
            0
        \end{bmatrix}=\sqrt{\frac{2}{N_x+1}}\sqrt{\frac{2}{N_t+1}}\left[\sin\frac{j r\pi}{N_t+1}\right]_{j=1,\cdots,N_t}\otimes
        \begin{bmatrix}
            \left[\sin\frac{ms\pi}{N_x+1}\right]_{m=1,\cdots,N_x}\\
            0
        \end{bmatrix},
    \end{aligned}
\end{equation*}
are eigenvectors of the unperturbed Hermitian part $\widetilde{\mathbf{H}}_1^{(0)}:=I_{N_t}\otimes I_{2N_x}-\frac{1}{2}\mathsf{P}_{N_t}\otimes \hat{Q}$, and the corresponding eigenvalues are $1-\frac{1}{2}\theta_r q_s$. Since $\theta_r$ and $q_s$ attain their maximal values at $r=1$ and $s=1$, respectively, the smallest eigenvalue on the physical branch of $\widetilde{\mathbf{H}}_1^{(0)}$ is
\begin{equation}
    \begin{aligned}
        \label{equ:appendix:tele:6}
        \lambda_{\min}(\widetilde{\mathbf{H}}_1)&=1-\cos\frac{\pi}{N_t+1}\left(1-\tilde{\lambda}+\tilde{\lambda}\cos\frac{\pi}{N_x+1}\right)+\mathcal{O}\left(\frac{\varepsilon^2}{\tau}\right)\sim \frac{\tilde{\lambda}\pi^2}{2N_x^2},
    \end{aligned}
\end{equation}
where the final asymptotic uses $\tau=\Theta(h^{2-1/\beta})$ (so $\tilde{\lambda}=\Theta(1)$) and $\varepsilon=o(h^2)$. The associated normalized eigenvector is still denoted by $u_{1,1}=\xi^{(1)}\otimes\begin{bmatrix}\zeta^{(1)}\\0\end{bmatrix}$. In particular, the physical spectral scale of $\widetilde{\mathbf{H}}_1$ remains independent of the multiscale parameter $\varepsilon$, which is precisely the feature needed in the multiscale regime.

\par Next we study the effect of the source block. Eq.~(\ref{equ:appendix:tele:1}) shows that $\mathbf{H}_{\text{homo,1}}$ is obtained from $\left[\begin{smallmatrix}\widetilde{\mathbf{H}}_1 & 0\\ 0 & 0\end{smallmatrix}\right]$ by adding the off-diagonal perturbation
\begin{equation*}
    \begin{aligned}
        E_K=\begin{bmatrix}
            O & -\text{diag}(\mathbf{F})/(2K)\\
            -\text{diag}(\mathbf{F})^\dagger/(2K) & 0
        \end{bmatrix}.
    \end{aligned}
\end{equation*}
The lower-right zero block now has the same dimension as the active source support, but it still contributes only auxiliary zero modes. Hence the eigenvalue relevant for the decay of the physical variables is the physical branch bifurcating from $\lambda_{\min}(\widetilde{\mathbf{H}}_1)$, rather than the literal smallest eigenvalue of $\mathbf{H}_{\text{homo,1}}$. We now estimate the shift of this branch by standard matrix perturbation. Let $v_*:=[u_{1,1};0]$. Since $E_K$ is purely off-diagonal, the first-order correction vanishes:
\begin{equation}
    \begin{aligned}
        \label{equ:appendix:tele:7}
        v_*^\dagger E_K v_*=0.
    \end{aligned}
\end{equation}
Therefore the first nonzero correction appears at second order. Summing over the auxiliary zero modes gives the Schur-complement correction
\begin{equation}
    \begin{aligned}
        \label{equ:appendix:tele:8}
        \delta\lambda=-\frac{\|\text{diag}(\mathbf{F})^\dagger u_{1,1}\|_2^2}{4K^2\lambda_{\min}(\widetilde{\mathbf{H}}_1)},
    \end{aligned}
\end{equation}
Consequently, using Eqs. (\ref{equ:appendix:tele:7}) and (\ref{equ:appendix:tele:8}), the eigenvalue on this physical branch satisfies
\begin{equation}
    \begin{aligned}
        \label{equ:appendix:tele:9}
        \lambda_{\mathrm{phys}}(\mathbf{H}_{\text{homo,1}})=\lambda_{\min}(\widetilde{\mathbf{H}}_1)-\frac{\|\text{diag}(\mathbf{F})^\dagger u_{1,1}\|_2^2}{4K^2\lambda_{\min}(\widetilde{\mathbf{H}}_1)}+\mathcal{O}(K^{-3}).
    \end{aligned}
\end{equation}
Because $u_{1,1}=\xi^{(1)}\otimes\begin{bmatrix}\zeta^{(1)}\\0\end{bmatrix}$, the coupling strength is measured by the entrywise product $\text{diag}(\mathbf{F})^\dagger u_{1,1}$ on the same time-space mode that minimizes the spectrum of $\widetilde{\mathbf{H}}_1$. Hence a sufficient condition for the correction term in Eq.~(\ref{equ:appendix:tele:9}) to be at most one half of $\lambda_{\min}(\widetilde{\mathbf{H}}_1)$ is
\begin{equation}
    \begin{aligned}
        \label{equ:appendix:tele:10}
        K\ge \frac{\|\text{diag}(\mathbf{F})^\dagger u_{1,1}\|_2}{\sqrt{2}\lambda_{\min}(\widetilde{\mathbf{H}}_1)}.
    \end{aligned}
\end{equation}
The reduction of Eq.~(\ref{equ:appendix:tele:10}) to a pure power of $N_x$ requires additional structural information on $\mathbf{F}$, so we keep the bound in this explicit form. Under this choice, $\lambda_{\mathrm{phys}}(\mathbf{H}_{\text{homo,1}})\ge \frac{1}{2}\lambda_{\min}(\widetilde{\mathbf{H}}_1)$, so the smallest nontrivial eigenvalue on the physical branch of $\mathbf{H}_{\text{homo,1}}$ remains of the same order as that of $\widetilde{\mathbf{H}}_1$. Therefore, after dividing the source block by a normalization factor $K$ satisfying Eq.~(\ref{equ:appendix:tele:10}), the Hermitian part of the total matrix used in the Schr\"odingerization procedure still has a positive lower spectral scale along the physical branch that is independent of the multiscale parameter $\varepsilon$. Since the ODE matrix entering the Schr\"odingerization step is $-\mathbf{H}_{\text{homo}}(K)$, this implies that the corresponding warped-phase parameter satisfies $p^\diamond=0$ on the physical branch used for reconstruction.
\par Hence the value of $K$ in Eq.~(\ref{equ:appendix:tele:1}) also changes the success probability of the quantum algorithm through the overall 2-norm of $\mathbf{u}_{\text{homo}}(T;K)$. Since $\|\mathbf{u}_{\text{homo}}(T;K)\|_2^2=\|\mathbf{u}(T)\|_2^2+K^2\|\chi\|_2^2$, the probability of obtaining the physical component from the normalized homogeneous state is
\begin{equation*}
    \begin{aligned}
        \mathrm{Pr}(\mathbf{u})=\frac{1}{2}e^{-2p^{\diamond}}\frac{\|\mathbf{u}(T)\|_2^2}{\|\mathbf{u}(T)\|_2^2+K^2\|\chi\|_2^2}.
    \end{aligned}
\end{equation*}
Here $p^{\diamond}=0$ for the physical branch selected above, so the dependence on $K$ is entirely through the ratio $\|\mathbf{u}(T)\|_2^2/(\|\mathbf{u}(T)\|_2^2+K^2\|\chi\|_2^2)$. Accordingly, the repetition count obtained by amplitude amplification is
\begin{equation}
    \begin{aligned}
        \label{equ:appendix:tele:11}
        g=\mathcal{O}\left(\sqrt{1+\frac{K^2\|\chi\|_2^2}{\|\mathbf{u}(T)\|_2^2}}\right)=\mathcal{O}\left(1+\frac{K\|\chi\|_2}{\|\mathbf{u}(T)\|_2}\right).
    \end{aligned}
\end{equation}
Substituting Eq.~(\ref{equ:appendix:tele:10}) into Eq.~(\ref{equ:appendix:tele:11}) yields the corresponding bound
\begin{equation}
    \begin{aligned}
        \label{equ:appendix:tele:12}
        g=\mathcal{O}\left(1+\frac{\|\text{diag}(\mathbf{F})^\dagger u_{1,1}\|_2\|\chi\|_2}{\lambda_{\min}(\widetilde{\mathbf{H}}_1)\|\mathbf{u}(T)\|_2}\right),
    \end{aligned}
\end{equation}
which shows explicitly how the normalization parameter $K$ trades a better lower spectral bound for a lower post-selection probability. To make Eq.~(\ref{equ:appendix:tele:12}) explicit for the multiscale telegraph equation, we use the block structure of $\mathbf{F}$ in Eq.~(\ref{equ:system:2}) and the triangle inequality to obtain
\begin{equation*}
    \begin{aligned}
        \|\text{diag}(\mathbf{F})^\dagger u_{1,1}\|_2
        &\le \left(\sum_{n=0}^{N_t-1}|\xi_{N_t-n}^{(1)}|^2\left\|\text{diag}(\hat{b}_n)\begin{bmatrix}
            \zeta^{(1)}\\
            0
        \end{bmatrix}\right\|_2^2\right)^{1/2}\\
        &\qquad+|\xi_{N_t}^{(1)}|\left\|\text{diag}(\hat{Q}_0\hat{w}_0)\begin{bmatrix}
            \zeta^{(1)}\\
            0
        \end{bmatrix}\right\|_2,
    \end{aligned}
\end{equation*}
where the first term collects all boundary-source blocks, including the $n=0$ contribution. We estimate these two terms separately. For the boundary-source part, Eq.~(\ref{equ:multiscale:6}) shows that, to leading order as $\varepsilon\to0$, the upper block of $\hat{b}_n$ is proportional to $b_{1,n+1}-c_{2,n+1}$ and is supported only at the first and last spatial entries. Since
\begin{equation*}
    \begin{aligned}
        \zeta^{(1)}_1=\zeta^{(1)}_{N_x}=\sqrt{\frac{2}{N_x+1}}\sin\frac{\pi}{N_x+1}=\mathcal{O}(N_x^{-\frac{3}{2}}),
    \end{aligned}
\end{equation*}
and the boundary traces entering $b_{1,n},b_{2,n},c_{1,n},c_{2,n}$ remain of order $\mathcal{O}(1)$, one has
\begin{equation}
    \begin{aligned}
        \label{equ:appendix:tele:13}
        \left(\sum_{n=0}^{N_t-1}|\xi_{N_t-n}^{(1)}|^2\left\|\text{diag}(\hat{b}_n)\begin{bmatrix}
            \zeta^{(1)}\\
            0
        \end{bmatrix}\right\|_2^2\right)^{1/2}
        &\le \mathcal{O}(N_x^{-\frac{3}{2}})\left(\sum_{j=1}^{N_t}|\xi_j^{(1)}|^2\right)^{1/2}\\
        &=\mathcal{O}(N_x^{-\frac{3}{2}}),
    \end{aligned}
\end{equation}
For the initial-data part, the last block of $\mathbf{F}$ sits at the endpoint of the time chain, so
\begin{equation*}
    \begin{aligned}
        \xi_{N_t}^{(1)}=\sqrt{\frac{2}{N_t+1}}\sin\frac{N_t\pi}{N_t+1}=\mathcal{O}(N_t^{-\frac{3}{2}}).
    \end{aligned}
\end{equation*}
Moreover, $\hat{Q}_0$ has only $\mathcal{O}(1)$ nonzero entries of size $\mathcal{O}(1)$ in each row, so if the entries of the initial data are of order $\mathcal{O}(1)$ then each component of $\hat{Q}_0\hat{w}_0$ is also $\mathcal{O}(1)$. Therefore,
\begin{equation}
    \begin{aligned}
        \label{equ:appendix:tele:14}
        |\xi_{N_t}^{(1)}|\left\|\text{diag}(\hat{Q}_0\hat{w}_0)\begin{bmatrix}
            \zeta^{(1)}\\
            0
        \end{bmatrix}\right\|_2
        &\le |\xi_{N_t}^{(1)}|\,\|\hat{Q}_0\hat{w}_0\|_{\infty}\,\left\|\begin{bmatrix}
            \zeta^{(1)}\\
            0
        \end{bmatrix}\right\|_2\\
        &=\mathcal{O}(N_t^{-\frac{3}{2}})
        =\mathcal{O}\left(N_x^{-3+\frac{3}{2\beta}}\right),
    \end{aligned}
\end{equation}
again using $N_t\sim N_x^{2-1/\beta}$. Combining Eqs. (\ref{equ:appendix:tele:13}) and (\ref{equ:appendix:tele:14}) yields $\|\text{diag}(\mathbf{F})^\dagger u_{1,1}\|_2=\mathcal{O}(N_x^{-\frac{3}{2}})$. Together with Eq.~(\ref{equ:appendix:tele:6}), this gives $\lambda_{\min}(\widetilde{\mathbf{H}}_1)=\mathcal{O}(N_x^{-2})$, and hence Eq.~(\ref{equ:appendix:tele:10}) implies $K=\mathcal{O}\left(\frac{\|\text{diag}(\mathbf{F})^\dagger u_{1,1}\|_2}{\lambda_{\min}(\widetilde{\mathbf{H}}_1)}\right)=\mathcal{O}(N_x^{\frac12})$. On the other hand, the support vector in Eq.~(\ref{equ:appendix:tele:1}) satisfies $\|\chi\|_2=\sqrt{4(N_t-1)+2N_x}=\mathcal{O}(N_x^{1-\frac{1}{2\beta}})$, while if each component of the discrete solution remains of order $\mathcal{O}(1)$ then $\|\mathbf{u}(T)\|_2=\mathcal{O}(\sqrt{N_xN_t})=\mathcal{O}(N_x^{\frac32-\frac{1}{2\beta}})$, again because $N_t\sim N_x^{2-1/\beta}$. Therefore the repetition count associated with the normalization parameter $K$ satisfies
\begin{equation}
    \begin{aligned}
        \label{equ:appendix:tele:15}
        g_K=\mathcal{O}\left(1+\frac{K\|\chi\|_2}{\|\mathbf{u}(T)\|_2}\right)=\mathcal{O}(1).
    \end{aligned}
\end{equation}
Thus, in the multiscale telegraph regime considered here, the normalization needed to preserve a positive physical spectral gap does not introduce any additional asymptotic repetition overhead.
\end{document}